\newtheorem{theorem}{Theorem}[section]
\newtheorem{lemma}[theorem]{Lemma}
\newtheorem{proposition}[theorem]{Proposition}
\newtheorem{definition}[theorem]{Definition}
\newtheorem{corollary}[theorem]{Corollary}
{\theorembodyfont{\rmfamily}
\newtheorem{remark}[theorem]{Remark}
}
\newcommand{\N}{\mathbb{N}}
\newcommand{\Z}{\mathbb{Z}}
\renewcommand{\S}{\mathbb{S}}
\newcommand{\R}{\mathbb{R}}
\newcommand{\T}{\mathbb{T}}
\newcommand{\E}{\mathcal{E}}
\newcommand{\Ext}{{\rm Ext}}
\newcommand{\Om}{\Omega}
\newcommand{\Tr}{{\rm Tr}}
\definecolor{ao(english)}{rgb}{0.0, 0.5, 0.0}
\newcommand{\ol}[1]{\overline{#1}}
\definecolor{ao(english)}{rgb}{0.0, 0.5, 0.0}
\newcommand{\footnoteremember}[2]{\footnote{#2}\newcounter{#1}\setcounter{#1}{\value{footnote}}}
\def\qed{\hfill $\square$ \goodbreak \medskip}
\def\eps{\varepsilon}
\title{Estimates of First and Second Order Shape Derivatives\\ in  Nonsmooth Multidimensional Domains and Applications}
\author{Jimmy Lamboley\footnoteremember{1}{CEREMADE, Universit\'e Paris-Dauphine, Place du Mar\'echal de Lattre de Tassigny, 75775 Paris, France}, Arian Novruzi\footnote{University of Ottawa, Department of Mathematics and Statistics, 585 King Edward, Ottawa, ON, K1N 6N5, Canada}, Michel Pierre\footnote{ENS Rennes, IRMAR, UEB, av Robert Schuman, 35170 Bruz, France}}
\date{\today}
\begin{document}
\maketitle

\begin{abstract}
In this paper we investigate continuity properties of first and second order shape derivatives of functionals depending on second order elliptic PDE's around nonsmooth domains, essentially either Lipschitz or convex, or satisfying a uniform exterior ball condition. We prove rather sharp continuity results for these shape derivatives with respect to Sobolev norms of the boundary-traces of the displacements. With respect to previous results of this kind, the approach is quite different and is valid in any dimension $N\geq 2$. It is based on sharp regularity results for Poisson-type equations in such nonsmooth domains. We also enlarge the class of functionals and PDEs for which these estimates apply. Applications are given to qualitative properties of shape optimization problems under convexity constraints for the variable domains or their complement. \\

\noindent{\it MSC2010:} 49Q10, 46N10, 46E35, 35R35.\\
{\it Keywords:\,} Shape derivative, shape optimization, convexity constraint, energy functional, Sobolev estimates, optimality conditions, regularity. 
\smallskip

\end{abstract}

\section{Introduction}\label{sect:intro}

In this paper, we focus on regularity estimates for first and second order shape derivatives {\em around nonsmooth subsets of $\R^N$ {($N\geq 2$)}} for energy functionals involving classical elliptic {partial differential equations (PDE)}. For instance, we address the following question: 
given a bounded {Lipschitz or convex} subset $\Omega_0\subset\R^N$, {we wonder} what is the {optimal} regularity of the shape derivatives 
$$\xi\to E'(\Omega_0)(\xi),\;\; \xi\to  E''(\Omega_0)(\xi,\xi),$$
where $E'(\Omega_0), E''(\Omega_0)$ respectively denote the first  and second shape derivatives around $\Omega_0$ of the shape functional {$\Omega\mapsto E(\Omega)=\int_\Omega K(x,U_\Omega,\nabla U_\Omega)dx$, $K(x,\cdot,\cdot)$ a quadratic polynomial and $U_\Omega=U_\Omega(x)$} the solution of an elliptic PDE set in $\Omega$ (see Sections \ref{ssect:energy}, \ref{ss:mainresults} for the precise definitions). 

This question, interesting for itself, is in particular motivated by the qualitative analysis of shape optimization problems of the form
\begin{equation}\label{e:minJ}
\min\{J(\Om),\ \Om\subset\mathbb R^N\ \textrm{ \bf convex},\ \Omega\in \mathcal{O}_{ad}\},
\end{equation}
where $\mathcal{O}_{ad}$ is a set of admissible subsets of $\R^N$ and $J:\mathcal{O}_{ad}\rightarrow\R$ is a shape functional which itself involves shape functionals $\Omega\mapsto E(\Omega)$ of the above type.

The following 2-dimensional example was considered in \cite{LNP} among other cases:
\begin{equation}\label{e:J,Omad}
J(\Om)={R}(E(\Omega),|\Omega|)-P(\Omega),\quad
\mathcal{O}_{ad}=\left\{\Omega\subset\mathbb R^2,\;\textrm{ open and }\,\partial\Omega\subset \{x,\; a\leq|x|\leq b\}\right\},
\end{equation}
where ${R}:\R^2\to\R$ is a smooth function, {$E(\Omega)$ is a shape functional related to a PDE}, $|\Omega|$ is the Lebesgue measure of $\Omega$,
$P(\Omega)$ its perimeter and $(a,b)\in[0,\infty]^2$.
It was proved (see \cite[Theorem 2.9, Theorem 2.12 and Corollary 2.13]{LNP} and also Section \ref{sect:app} in the present paper) that if the second order shape derivative $\xi\mapsto E''(\Om_0)(\xi,\xi)$ around any bounded convex subset $\Om_0$ is {\em continuous with respect to a  norm strictly weaker than $H^1(\partial\Om_0)$}, then optimal shapes of \eqref{e:minJ} are {\em polygonal} in $\left\{x,\; a<|x|< b\right\}$.
In \cite{LNP}, the authors prove that such a continuity holds in the two specific examples:
when the functional {$E(\Omega)$ is the Dirichlet energy of $\Omega$ -that is $K=K(x,U,q)=\|q\|^2-2f(x)$ and $U_\Omega$ is solution of the associated Dirichlet problem}, or when $E(\Omega)$ is the first eigenvalue of the Dirichlet-Laplacian on $\Omega$. More precisely, it is proved in \cite{LNP}  that the second order shape derivative of $E(\cdot)$ is, in these two examples and around any open convex domain $\Om_0$, continuous for the $H^{1/2}(\partial\Om_0)\cap L^\infty(\partial\Om_0)$ topology (and therefore continuous for the $H^{1/2+\eps}(\partial\Om_0)$-topology for any $\eps>0$). The proof of this continuity strongly relies on the 2-dimensional environment. 
{As explained below, we prove in this paper that {\em even the full $H^{1/2}(\partial\Omega_0)$-continuity holds in this case and even in any dimension} (see iii) in Corollary \ref{c:e',e''(i,e,intro)}).}
{Note that this continuity is optimal since, for regular convex domains $\Omega_0$, if for instance $f=0$ on $\partial\Omega_0$, then $E''(\Omega_0)$ satisfies 
\begin{equation}\label{coerc}
E''(\Omega_0)(\xi,\xi)\geq C\|\xi\|^2_{H^{1/2}(\partial\Omega_0)},
\end{equation}
for any displacement $\xi$ which is orthogonal to $\partial\Omega_0$. This estimate (\ref{coerc}) may be obtained by using (5.101) and Section 5.9.6. in \cite{HP}. 

In Problem (\ref{e:minJ}), only convex domains are involved. However, it is also interesting to consider shape optimization problems where the PDE is set in an {\em "exterior domain"} like
\begin{equation}\label{ext1}
\min\{J(\R^N\setminus\overline{\Om}),\ \Om\subset\mathbb R^N\ \textrm{ \bf convex},\ \Omega\in \mathcal{O}_{ad}\},
\end{equation}
where $\mathcal{O}_{ad}$ is as before, but now $J$ involves $E(\Omega)=\int_{\R^N\setminus\overline{\Omega}}K(U_{\Omega},\nabla U_{\Omega})$ where $U_{\Omega}$ is solution of an elliptic PDE set on the {\em exterior domain} $\R^N\setminus\overline{\Omega}$, which is here the complement of a convex set. It is well-known that solutions of such PDE's are not so regular as in convex domains. These exterior domains are nevertheless Lipschitz domains and this is one main reason why it is interesting to look at the case of Lipschitz domains even for shape optimization problems involving convex domains like (\ref{ext1}).\\

In this paper,  we use a different strategy to estimate shape derivatives, and we generalize and improve the above-mentioned shape derivative estimates in the following directions.

\begin{itemize}
\item A most important generalization concerns the dimension of shapes. In \cite{LNP}, the result and the strategy were restricted to {\bf planar shapes}. Thanks to our new strategy, we are able to provide estimates of first and second order shape derivatives {\bf in any dimension}. This is an important breakthrough for the study of problems like \eqref{e:minJ} in dimension 3 or higher.

\item We generalize the class of PDE energy-functionals in several ways (see Section \ref{ssect:energy} for precise definitions): the underlying elliptic operator is now a general linear elliptic operator with variable coefficients, the energy is any quadratic integral functional of the state function and its gradient (and in particular does not need to be the energy associated to the PDE defining the state function) and more importantly, we consider both interior and exterior PDEs. This last point motivates the next item.

\item We generalize the classes of shapes we consider. Indeed we do not only consider the class of convex domains, but we investigate two wider classes, namely the class of Lipschitz domains and the class of Lipschitz domains satisfying a uniform exterior ball condition (we refer to these domains as {\it {semi}-convex} domains, see Definition \ref{def1}). Even if we are interested in applications about convex domains, as we pointed out in the previous point, we are interested in PDEs defined in the exterior of some convex domain. Whence the consideration of the above kinds of boundary regularity. Estimates for semi-convex domains are the same as those for convex domains. They are weaker for Lipschitz domains, but {probably} rather sharp and they are strong enough to be used for our applications (see Section \ref{sect:app}) which are interesting for themselves.

\item Even in the particular case where $E$ is exactly the Dirichlet energy and $\Om$ is a 2-dimensional convex domain, we improve the result of \cite{LNP} in obtaining that $E''(\Om)$ is continuous in the $H^{1/2}(\partial\Om)$-norm, instead of $H^{1/2}(\partial\Om)\cap L^\infty(\partial\Om)$. As noticed above, this result is sharp since one cannot expect continuity in an $H^s$-norm for $s<1/2$ (even if $\Om$ is smooth). This specific result is actually valid as soon as the energy is the one associated to the PDE defining the state function, and is valid in any dimension and for any semi-convex domain.

\end{itemize}

The method for proving the shape derivative estimates is new. In order to study the derivative at a set $\Om_{0}$, 
we first prove adequate estimates of the ``material derivative'' $\hat{U}_\theta'$, where $\theta:\R^N\to\R^N$ is a smooth vector field, $'$ denotes 
the derivative with respect to $\theta$,
$\hat{U}_\theta=U_\theta\circ(I+\theta)$, and $U_\theta$ is the state solution of the related PDE in the domain $\Omega_\theta=(I+\theta)(\Omega_0)$.
For our purpose, it appears that it is much more efficient than using the usual shape derivative $U'_\theta$. We use the well-known property 
(see for example \cite{HP}) that without any regularity of $\Om$,
the map $\theta\mapsto\hat{U}_\theta$ is in general differentiable (even $C^\infty$ if the involved coefficients are smooth) when seen as a 
$H^1_0(\Omega_0)$-valued function
(whereas $\theta\mapsto U'_\theta$ is differentiable only when seen as an $L^2$-valued function). We show how the regularity of the shape derivatives 
essentially depends on the regularity of the state function $U_0$. Thus we shall use some sharp regularity results for the solution of a linear second 
order PDE in a Lipschitz or semi-convex domain, in particular $W^{1,p}$-regularity results when the data are in $W^{-1,p}$ (see Propositions 
\ref{p:LU=f(i),Lip} and \ref{p:LU=f(i),conv}).

Note that we insisted in this introduction on the second order shape derivative, but we also obtain estimates of the first order shape derivative 
which seem to be new as well, in this non-smooth setting.

As an application of these estimates, using the strategy of \cite{LN,LNP}, we prove that any solution $\Omega_0$ of (\ref{e:minJ}), (\ref{ext1}) is 
polygonal, if $N=2$ and if $E$ is in one of two classes described above (where $E$ depends on the solution of a PDE in the interior or the exterior of a 
convex domain, see more precisely Section \ref{ssect:energy}).
Also, in higher dimension, we use our estimates to analyze solutions of the $N$-dimensional version of Problem \eqref{e:J,Omad}. We obtain very strong qualitative properties of optimal shapes, namely that the space of deformations which leave it convex is actually {\em of finite dimension} (see Theorem \ref{th:multiD}). As an easy consequence, we obtain that any optimal shape has zero Gauss curvature on any open set where the boundary is smooth. Actually, this "finite dimension" property does contain quite more geometrical information.

This paper is structured as follows. In Section \ref{sect:results} we introduce the notations, the classes of PDE's under consideration and 
state our main results. In Section \ref{s:estimates} we prove the estimates of first and second order shape derivatives. In the last section, we apply these estimates for the analysis of optimal planar convex domains, which happen to be polygons and we conclude by analyzing the consequence of our estimates for higher dimensional optimal convex domains.

\section{Main results}\label{sect:results}

\subsection{Class of energy functionals}\label{ssect:energy}

We deal with energy functionals $E(\Omega)$ of two main forms.
\begin{itemize}
\item
\noindent{\bf Interior PDE:}
$E(\Omega)$ depends on a PDE in the interior of $\Omega$, namely
\begin{equation}\label{e:E(i)}
E(\Omega)=
\int_\Omega 
 K(x,U_\Omega,\nabla U_\Omega)dx,
\end{equation}
where
\begin{equation}
K(x,U,q)
=
\frac{1}{2}
\left(
\sum_{i,j=1}^N
\alpha_{ij}(x)q_{i}q_{j}
+
\alpha_{00}(x)U^2
\right)
+
\sum_{i=1}^N\beta_{i}(x)q_{i}
+
\gamma(x) U
+
\delta(x),										\label{e:K(i)}
\end{equation}
with smooth enough coefficients $\alpha,\beta,\gamma,\delta$, and $U_\Omega$ solution of 
\begin{equation}
U\in H^1_0(\Omega),\;\;  LU:
=
 -\sum_{i,j=1}^N\partial_j(a_{ij}(x)\partial_iU) + \sum_{i=1}^Nb_i\partial_i U + cU
 =
 f,\;\;\;\;\;\;\textrm{ in }\Om.								\label{e:LU=f(i)}
\end{equation}

Regularity of the coefficient will be made precise later, but they will satisfy the following throughout the paper: 
\begin{eqnarray}
&&
\exists\lambda>0,\quad
\forall\xi\in\mathbb R^N,\; \forall x\in\mathbb R^N,\quad \lambda|\xi|^2
\leq
\sum_{i,j=1}^N
a_{ij}(x)\xi_i \xi_j
\leq
\lambda^{-1}|\xi|^2,\quad
		\label{e:a(i)}\\
&&
\forall (i,j)\in\llbracket1,n\rrbracket,\;\; a_{ij}=a_{ji},\;\;
\alpha_{ij}=\alpha_{ji}, \;\;\;\;\textrm{ and }c\geq 0.				\label{e:Mij=Mji}			
\end{eqnarray}
Note that the condition $\alpha_{ij}=\alpha_{ji}$ is not restrictive as, in the case the matrix $(\alpha_{ij})$ is not symmetric, we can consider
$\overline{\alpha}_{ij}=\frac{1}{2}(\alpha_{ij}+\alpha_{ji})$, which is symmetric, and then in (\ref{e:K(i)}) take $\overline{\alpha}_{ij}$ instead of $\alpha_{ij}$. {Note also that no ellipticity condition is a priori required on $(\alpha_{ij})$.} Actually, our strategy can handle much more general functional $K$, see Remark \ref{r:|e'|+|e''|(i),Om-conv,K-gen(i)}.

\item
\noindent{\bf Exterior PDE:} {For simplicity, we will restrict ourselves to the model example where}
\begin{equation}\label{e:E(e)}
E(\Omega)
=
\int_{\Omega^e} |\nabla U_\Omega|^2,
\end{equation}
where $U_\Omega\in H^{1,0}_0(\Omega^e)$ solves
\begin{equation}\label{e:LU=f(e)}
-\Delta U=f\textrm{ in }\Om^e:=\mathbb R^N\backslash\overline{\Omega}.
\end{equation}
and $H^{1,0}_0(\Omega^e)$ is defined (see \cite{nedelec-1}) as the set of functions of $H^{1,0}(\Omega^e)$ with trace 0 on $\partial\Om$, where
\begin{eqnarray}
\hspace*{-9mm}
 H^{1,0}(\Omega^e)
 \!&\!=\!&\!
 {\bigg\{} 
  U:\Omega^e\mapsto\mathbb R,
  						\label{e:H10}\\
  \!&\!\!&\!
  \left.
  \hspace*{1mm}
 \|U\|^2_{H^{1,0}(\Omega^e)}
  :=
  \left\|\frac{U}{(1+|x|^2)^{1/2}(\ln(2+|x|^2))^{\delta_{2,N}/2}}\right\|^2_{L^2(\Omega^e)}	
  \!\!\!\!\!+ 
  \|\nabla U\|^2_{L^2(\Omega^e)} <\infty
  \right\},									\nonumber
\end{eqnarray}
where $\delta_{2,N}=1$ if $N=2$ and $\delta_{2,N}=0$ if $N\neq 2$.
\end{itemize}

\subsection{Shape derivative estimates}\label{ss:mainresults}

In the whole paper, we consider 
{\begin{equation}\label{Omega}
\Om_0, \Om\;\Subset B_R=B(0,R), \;  {\rm connected,\; open,\;  bounded},
\end{equation}
  }
 where $R>0$ (and large). We set $\Theta:=W^{1,\infty}_0(B_R,\R^N)$ the Banach space {equipped with the usual norm
$$\forall\theta\in \Theta,\; \|\theta\|_{1,\infty}:=\sup_{x\in B_R}\{\|\theta(x)\|+\|D\theta(x)\|\}.$$}
 Given a  shape functional $E(\cdot) :\mathcal{O}_{ad}\to \R$ defined on a family $\mathcal{O}_{ad}$ of admissible subsets of $\R^N$, we consider
$$\mathcal{E}:\Theta\to\R, \;\;\E(\theta):=E((I+\theta)(\Om_{0})).$$
Then, {$E$ is said to be shape differentiable of order $m\in\N^*$ at $\Omega_0$ (resp. of class $C^m$ near $\Omega_0$) if and only if $\E$ is $m$ times Fr\'echet-differentiable at $\theta=0$ (resp. if and only if $\E$ is $m$ times continuously differentiable in a neighborhood of $\theta=0$)}. When it is well-defined, $\E'(0)$ and $\E''(0)$ are respectively called the first and second order shape derivative of $E$ at $\Om_{0}$, and can also be denoted by $E'(\Om_{0})$ and $E''(\Om_{0})$. Their values at {$\xi,\eta\in\Theta$ are denoted by $\E'(0)(\xi)$ or $E'(\Omega)(\xi)$, $\E''(0)(\xi,\eta)$ or $E''(\Omega)(\xi,\eta)$}. They are linear and bilinear continuous forms on $\Theta$ respectively
and it is well-known that $E'(\Om_{0}), E''(\Om_{0})$ only depend on the trace on $\partial\Om_{0}$ of the deformations {$\xi$ and $\eta$} (see the proof of Theorem \ref{th:e',e''(i,intro)}).\\

\begin{definition}\label{def1}
{A bounded open subset  $\Om\subset \R^N$ is said to be semi-convex if it is Lipschitz and satisfies a {\em uniform exterior ball condition} in the following sense: there exists $r>0$ such that for any $x\in\partial\Om$, there exists $y\in\R^N$ with $\overline{B(y,r)}\cap\overline{\Om}=\{x\}$.}
\end{definition}
{It is known that a domain is semi-convex if it is locally representable as the graph of a semi-convex function (see for example \cite[Theorem 3.9]{MMY}), where a function $f:C\to\R$ is said semi-convex on a convex subset $C$ of $\R^n$ if there exists $M\in\R$ such that $x\in C\mapsto f(x)+M\|x\|^2$ is convex.}
Recall that $\Omega$ is said to be Lipschitz if it is locally representable as the graph of a Lipschitz function.
\begin{remark}\label{r:dR->qc} 

{It is easy to check that, if $\Omega$ is semi-convex then for $R_1$ small enough, there exist $(\delta_1(R_1), \sigma(R_1))\in (0,\infty)\times (0,1)$ with $\lim_{R_1\to 0} \delta_1(R_1)=0$ such that $\Omega$ is a 
$(\delta_1(R_1), \sigma(R_1), R_1)$-quasi-convex domain in the sense of \cite{JLW}.
It follows that $\Omega$ is a $(\delta, \sigma, R)$-quasi-convex domain in the sense of \cite{JLW} for all 
$(\delta,\sigma,R)\in [\delta_1(R_1),\infty)\times(0,k(R)\sigma(R_1)]\times (0,R_1]$, { with $k(R)>0$, $k(R_1)=1$, $\lim_{R\to0}k(R)=0$}.

On the other hand, given a Lipschitz matrix $(a_{ij})$ (i.e. $a_{ij}\in W^{1,\infty}(\Omega)\; \forall i,j=1,...,N$), then for all $R_2\in (0,1)$, there exists $\delta_2(R_2)\in (0,\infty)$ with 
$\lim_{R_2\to 0} \delta_2(R_2)=0$ such that $A$ is a $(\delta_2(R_2), R_2)$-vanishing matrix in the sense of \cite{JLW}. It follows that $A$ is $(\delta,R)$-vanishing matrix in the sense of \cite{JLW} for all $(\delta,R)\in[\delta_2(R_2),\infty)\times (0,R_2]$. 

Therefore, we will be able to apply the $W^{1,p}$-regularity results proved in Theorem 1.1 of \cite{JLW} for the solutions to (\ref{e:LU=f(i)}) in quasi-convex domains (see Proposition \ref{p:LU=f(i),conv}). 
Indeed, given $p\in (1,\infty)$, we first choose $R_1$ and $R_2$ small enough so that $\delta_1(R_1),\delta_2(R_2)\leq \delta(N,p)$ as defined in Theorem 1.1 of \cite{JLW}, and $\Omega$ is a $(\delta_1(R_1),\sigma(R_1),R_1)$-quasi-convex and $(a_{ij})$ is 
{ $(\delta(R_2),R_2)$-vanishing}. It follows that if $R=\min\{R_1,R_2\}$ and { $\sigma=k(R)\sigma(R_1)$} then
$\Omega$ is a $(\delta(N,p),\sigma,R)$-quasi-convex and $(a_{ij})$ is 
$(\delta(N,p),R)$-vanishing.
}
\end{remark}

{Let us now present the main  results of this paper.}
\begin{theorem}\label{th:e',e''(i,intro)}
{\bf (interior)}
{Let $\Omega_0$ be as in (\ref{Omega}).} For $\theta\in\Theta$, 
we denote by $U_\theta$ the solution of (\ref{e:LU=f(i)}) in $\Omega_\theta=(I+\theta)(\Om_{0})$ (see  Proposition \ref{p:LU=f(i),Lip})  
and $\E(\theta)=E(\Omega_\theta)$, where $E$ is given by (\ref{e:E(i)}). The following holds.
\begin{itemize}
\item[i)]
If $a_{ij}, b_i, c, f,\ \alpha_{ij}, \beta_i, \gamma, \delta \in W^{m,\infty}({B_{R}})$, $m\in\N^*$, 
then $\left[\theta\mapsto\E(\theta)\right]$ is of class $C^m$ near $\theta=0\in\Theta$.
\end{itemize}
{\it In the rest of this statement, we assume the previous hypotheses are satisfied {for $m=2$. }\begin{itemize}{ \item[ii)] \label{item:L}
Assume $\Omega_0$ is Lipschitz and $L$ is of the form \eqref{e:LU=f(i)}. Then there exists ${\ol r}_1={\ol r}_1(\Omega_{0})$ satisfying 
${\ol r}_{1}\in(1,2)$ if $N=2$ and ${\ol r}_{1}\in(1,3)$ if $N\geq 3$ such that, for all $r\in({\ol r}_1,\infty)$, there exist $C_i=C_i(\Omega_0,L,f,K,r)$ with
\begin{eqnarray}
 |E'(\Om_{0})(\xi)|
 &\leq& 
 C_1
\|\xi\|_{W^{1-1/r,r}(\partial\Omega_0)},					\label{e:|e'(i)|<Tr,Lip}
\\
 |E''(\Om_{0})(\xi,\xi)|
 &\leq& 
 C_2
\|\xi\|^2_{W^{1-1/(2r),2r}(\partial\Omega_0)},				\label{e:|e''(i)|<Tr,Lip}
\end{eqnarray}
for all $\xi\in\Theta$.
}
\item[iii)]
Assume $\Omega_0$ is {semi}-convex and $L$ is of the form \eqref{e:LU=f(i)}. Then (\ref{e:|e'(i)|<Tr,Lip}), (\ref{e:|e''(i)|<Tr,Lip}) hold for { all} $r\in(1,\infty)$.
\item[iv)] Assume $\Omega_0$ is {semi}-convex, $L$ of the form \eqref{e:LU=f(i)} with $L^*=L$ and $E$ is exactly its associated energy (meaning that $\partial_{U}K(\cdot,U,\nabla U)-\nabla\cdot\partial_{q}K(\cdot,U,\nabla U)=L U$, 
for every smooth $U$). Then (\ref{e:|e'(i)|<Tr,Lip}), (\ref{e:|e''(i)|<Tr,Lip}) hold for { all} $r\in[1,\infty)$.
\end{itemize}}
\end{theorem}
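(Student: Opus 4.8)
The plan is to reduce everything to a single estimate on the material derivative $\hat U_\theta' \in H^1_0(\Omega_0)$ and then invoke the sharp elliptic regularity of the state $U_0$, which for semi-convex domains is $W^{2,p}$-type (equivalently $\nabla U_0 \in W^{1,p}$ for all $p<\infty$), via Proposition \ref{p:LU=f(i),conv}. First, following the strategy announced in the introduction, I would differentiate the pulled-back PDE (the equation satisfied by $\hat U_\theta = U_\theta\circ(I+\theta)$ on the fixed domain $\Omega_0$, with transported coefficients $A_\theta = |\det(I+D\theta)|(I+D\theta)^{-1}A((I+\theta)x)(I+D\theta)^{-T}$, etc.) with respect to $\theta$ at $\theta=0$. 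Item i) (with $m=2$) guarantees this is licit and gives $\theta\mapsto\hat U_\theta$ of class $C^2$ as an $H^1_0(\Omega_0)$-valued map, and one reads off that $\hat U_0'(\xi) =: \dot U$ solves $L\dot U = \operatorname{div}(G_1\nabla U_0) + \ldots$ in $\Omega_0$ with $\dot U\in H^1_0(\Omega_0)$, where $G_1$ is a matrix built linearly and algebraically from $D\xi$ and the coefficients, so $\|G_1\nabla U_0\|_{L^p} \le C\|\nabla\xi\|_{L^\infty}\|\nabla U_0\|_{L^p}$; similarly the lower-order terms are controlled by $\|\xi\|_{W^{1,\infty}}\|U_0\|_{W^{1,p}}$.

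Next I would run the same game one order higher: differentiating twice gives $\hat U_0''(\xi,\xi)=:\ddot U\in H^1_0(\Omega_0)$ solving an equation whose right-hand side is $\operatorname{div}(H_2\nabla U_0) + \operatorname{div}(H_1\nabla\dot U) + (\text{l.o.t.})$, again with $H_i$ algebraic in $D\xi$ and the (now $W^{2,\infty}$) coefficients, so $\|\text{RHS}\|_{W^{-1,p}}\le C\|\xi\|_{W^{1,\infty}}^2\big(\|\nabla U_0\|_{L^p}+\|\nabla\dot U\|_{L^p}\big)$. The key chain is then: semi-convexity $\Rightarrow$ $\nabla U_0\in L^p$ for every $p<\infty$ (Proposition \ref{p:LU=f(i),conv}) $\Rightarrow$ $\dot U\in W^{1,p}$ for every $p<\infty$ by the same regularity result applied to the $\dot U$-equation $\Rightarrow$ $\ddot U\in W^{1,p}$ for every $p<\infty$. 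Each step only needs $W^{-1,p}$ data, which is exactly what the displayed right-hand sides supply, and this is why the semi-convex case gives the full range $r\in(1,\infty)$ whereas the Lipschitz case would be capped by the Meyers-type exponent $\ol r_1$.

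Finally I would convert these interior $W^{1,p}$ bounds into the claimed boundary-trace estimates. Write $E'(\Omega_0)(\xi)$ and $E''(\Omega_0)(\xi,\xi)$ by differentiating $\E(\theta)=\int_{\Omega_0}|\det(I+D\theta)|\,K(\ldots,\hat U_\theta,(I+D\theta)^{-T}\nabla\hat U_\theta)$; the resulting expressions are integrals over $\Omega_0$ that are quadratic-affine in $(\xi,D\xi,\hat U,\nabla\hat U)$ and their $\xi$-derivatives, i.e. they are sums of terms like $\int_{\Omega_0}(\text{coeff})\cdot(D\xi\ \text{or}\ \xi)\cdot(\nabla U_0\ \text{or}\ U_0\ \text{or}\ \nabla\dot U\ \ldots)$. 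For the \emph{first} derivative one integrates by parts to move the derivative off $\xi$ and uses that $U_0\in H^1_0$, so the bulk terms collapse and one is left with a boundary term of the form $\int_{\partial\Omega_0}(\partial_n U_0)^2\,(\xi\!\cdot\!n)\,(\ldots)$ (Hadamard-type); Hölder on $\partial\Omega_0$ between $(\partial_n U_0)^2\in L^{r'}$ (since $\nabla U_0\in W^{1,r}\hookrightarrow$ trace in $L^{q}$, $q$ large) and $\xi\!\cdot\!n\in W^{1-1/r,r}(\partial\Omega_0)\hookrightarrow L^{r}$ yields \eqref{e:|e'(i)|<Tr,Lip}. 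For the \emph{second} derivative one similarly integrates by parts and bounds $\int_{\Omega_0}|D\xi|^2|\nabla U_0|^2 + \int_{\Omega_0}|D\xi||\nabla U_0||\nabla\dot U| + (\text{l.o.t.})$; taking $p=2r$ and using $\dot U\in W^{1,2r}$ with the bound $\|\dot U\|_{W^{1,2r}}\le C\|\xi\|_{W^{1,\infty}}$ from the previous paragraph, together with $\|D\xi\|_{L^{2r}(\Omega_0)}$ being controlled by the trace $\|\xi\|_{W^{1-1/(2r),2r}(\partial\Omega_0)}$ (this is the point: $\xi$ restricted to $\partial\Omega_0$ determines the shape derivative, and one may replace $\xi$ by a harmonic-type extension whose $W^{1,2r}(\Omega_0)$-norm is comparable to the boundary $W^{1-1/(2r),2r}$-norm), gives \eqref{e:|e''(i)|<Tr,Lip} with $r\in(1,\infty)$. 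Item iv) is the borderline refinement: when $E$ is the energy associated to $L=L^*$, the Hadamard structure forces extra cancellation (the first derivative is $\int_{\partial\Omega_0}(\text{data})(\xi\!\cdot\!n)$ and the second one loses one derivative on $U_0$), so one can afford $p=\infty$ in the regularity chain, i.e. $r=1$; here I would use that for the energy the classical formula $E''(\Omega_0)(\xi,\xi) = \int_{\Omega_0}(\ldots)$ can be rewritten purely in terms of $U_0$ and the \emph{first} shape derivative $U_0'$ (no material derivative of order two survives), and $U_0'\in H^1$ suffices.

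The main obstacle I expect is \emph{two-fold and concentrated at the boundary}: (a) justifying cleanly that $\xi\mapsto E'(\Omega_0)(\xi), E''(\Omega_0)(\xi,\xi)$ depend only on the boundary trace and may be estimated by replacing $\xi$ with a controlled extension — this requires care on a merely Lipschitz/semi-convex $\partial\Omega_0$ where the trace spaces $W^{1-1/r,r}(\partial\Omega_0)$ and their extension-to-$W^{1,r}(\Omega_0)$ operators are still available but not entirely standard; and (b) the bookkeeping of which derivative sits on $U_0$ versus on $\dot U$ in the second-order formula, since matching the exponent $2r$ on the right-hand side of \eqref{e:|e''(i)|<Tr,Lip} is tight — one genuinely needs $\nabla\dot U\in L^{2r}$ and $\nabla U_0\in L^{\infty-}$, and any slack would weaken the result. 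Everything else is routine once Proposition \ref{p:LU=f(i),conv} is in hand.
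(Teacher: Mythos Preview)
Your proposal has two genuine gaps, and both stem from missing the paper's central device.

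\textbf{Gap 1 (first derivative).} You propose to integrate by parts and reduce to a Hadamard boundary integral $\int_{\partial\Omega_0}(\partial_n U_0)^2(\xi\cdot n)(\ldots)$, then H\"older on the boundary using that ``$\nabla U_0\in W^{1,r}$'' has a trace. But $U_0\in W^{2,r}(\Omega_0)$ is not available: on a Lipschitz $\Omega_0$ it is false in general, and even on a semi-convex $\Omega_0$ Proposition~\ref{p:LU=f(i),conv} gives only $U_0\in W^{1,\infty}$, not $W^{2,r}$. The paper explicitly designs the argument so that \emph{no} boundary integration by parts is performed (see the first paragraph of Section~\ref{ss:method}); all estimates are interior estimates on $\Omega_0$, and the passage to boundary norms is done \emph{afterwards}, purely on $\xi$, via the extension Lemma~\ref{lem:extension}.

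\textbf{Gap 2 (second derivative).} You use $\|\dot U\|_{W^{1,2r}}\le C\|\xi\|_{W^{1,\infty}}$ in the cross term $\int|D\xi|\,|\nabla U_0|\,|\nabla\dot U|$. This leaves a factor $\|\xi\|_{W^{1,\infty}}$ in the final bound, and replacing $\xi$ by the $W^{1,2r}$-extension of its boundary trace does not control $\|\xi\|_{W^{1,\infty}}$. This dead end is exactly the content of Remark~\ref{r:|hU''|H1<(i)}. (In the semi-convex case one could repair this particular step by writing instead $\|\dot U\|_{W^{1,q}}\le C\|\xi\|_{W^{1,q}}$, using $\nabla U_0\in L^\infty$; but this does not help in the Lipschitz case ii), and your treatment of the $\ddot U$-term in $\E''(0)$ is left unexplained in any case.)

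\textbf{The missing idea: an adjoint state.} The paper never estimates $\dot U,\ddot U$ in $W^{1,p}$ for $p>2$. It observes that $\E'(0)$ and $\E''(0)$ contain $\dot U,\ddot U$ only through the fixed linear functional
\[
\ell(V)=\int_{\Omega_0}\partial_UK(\cdot,U_0,\nabla U_0)\,V+\partial_qK(\cdot,U_0,\nabla U_0)\cdot\nabla V,
\]
and introduces the adjoint state $\varphi\in W^{1,q}_0(\Omega_0)$ solving $L^*\varphi=\partial_UK(\cdot,U_0,\nabla U_0)-\nabla\!\cdot\partial_qK(\cdot,U_0,\nabla U_0)$. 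Then $\ell(\dot U)=\langle L\dot U,\varphi\rangle$ and $\ell(\ddot U)=\langle L\ddot U,\varphi\rangle$ are read off directly from the differentiated state equation with $\varphi$ as test function, and a three-term H\"older with $U_0\in W^{1,p}$, $\varphi\in W^{1,q}$, and only the $H^1$-bound $\|\dot U\|_{H^1}\le C\|\xi\|_{W^{1,2p/(p-2)}}$ yields $|\E'(0)|\le C\|\xi\|_{W^{1,p/(p-2)}}$ and $|\E''(0)|\le C\|\xi\|_{W^{1,2p/(p-2)}}^2$ (Theorem~\ref{th:|e'|+|e''|(i)}). The range of $r$ in ii)--iii) is precisely the range of admissible $q$ for the adjoint problem. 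In iv) one has $L^*\varphi=LU_0$, hence $\varphi=U_0\in W^{1,\infty}$, and $p=q=\infty$ becomes admissible, giving $r=1$.
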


\begin{proof} This result is a consequence of Theorem \ref{th:|e'|+|e''|(i)} which provides an estimate in terms of ``interior'' norms of $\xi$, 
{ which after replacing }$p\in(2,\ol{p}_{1})$ by $r=p/(p-2)\in({\ol r}_{1}:=\ol{p}_{1}/(\ol{p}_{1}-2),\infty)$ gives
\begin{equation}\label{intestim}
 |E'(\Om_{0})(\xi)|
 \leq 
 C_1
\|\xi\|_{W^{1,r}(\Omega_0)},
\;\;
 |E''(\Om_{0})(\xi,\xi)|
 \leq 
 C_2
\|\xi\|^2_{W^{1,2r}(\Omega_0)}.
\end{equation}
{If $\Om_0$ is { semi}-convex then we can have any $p\in(2,\infty)$ which leads to any $r\in(1,\infty)$, and if moreover $L$ is self-adjoint and $K$ is the associated energy, then we can have $p=\infty, r=1$.
}

{Now, we use the well-known fact, that $E'(\Omega_0)\xi, E''(\Omega_0)(\xi,\xi)$ depend only on the values of $\xi$ on $\partial\Omega_0$. More precisely, 
$$\xi,\hat{\xi}\in W^{1,\infty}(B_R),\; \xi-\hat{\xi}\in W_0^{1,2r}(\Omega_0)\;\Rightarrow\; E'(\Omega_0)(\xi)=E'(\Omega_0)(\hat{\xi}),\;E''(\Omega_0)(\xi,\xi)=E''(\Omega_0)(\hat{\xi},\hat{\xi}).$$
Indeed, let $\zeta_n$ be a sequence in $C_0^\infty(\Omega_0,\R^N)$ converging to $\xi-\hat{\xi}$ in $W^{1,2r}(\Omega_0)$. Let $z_n\in C^1([0,\infty):\R^N)$ be the solution of
$$\forall t\geq 0,\; \forall x\in \R^N,\; \partial_tz_n(t,x)=\zeta_n(z_n(t,x)),\;\; z_n(0,x)=x.$$
{Note that $z_n(t,\cdot)\in \Theta$ (see \cite{L}), and}  for $x$ in some neighborhood of $\partial\Omega_0$, $z_n(t,x)=x$ for all $t\geq 0$. It follows that $z_n(t,\Omega_0)=\Omega_0$, {where 
$z_n(t,\Omega_0)=\{z_n(t,x),\, x\in\Omega_0\}$}, and therefore $E(z_n(t,\Omega_0))=E(\Omega_0)$ for all $t\geq 0$. In particular
\begin{eqnarray*}
E'(\Omega_0)(\xi-\hat{\xi})
&=&
{\lim_{n\to\infty}}\frac{d}{dt}_{|_{t=0}}E(z_n(t,\Omega_0))=0,\\
E''(\Omega_0)(\xi-\hat{\xi},\xi-\hat{\xi})
&=&
{\lim_{n\to\infty}}\frac{d^2}{dt^2}_{|_{t=0}}E(z_n(t,\Omega_0))=0.
\end{eqnarray*}
From this property and (\ref{intestim}), we deduce
$$|E'(\Omega_0)(\xi)|\leq C_1\inf\{\|\hat{\xi}\|_{W^{1,r}(\Omega_0)}: \hat{\xi}\in W^{1,\infty}(\Omega_0;\R^N),\; \hat{\xi}=\xi\;on\;\partial\Omega_0\},$$
and the similar property for $E''(\Omega_0)(\xi,\xi)$ with the $W^{1,2r}$-norm. We then apply Lemma \ref{lem:extension} to deduce \eqref{e:|e'(i)|<Tr,Lip}, \eqref{e:|e''(i)|<Tr,Lip}.
}
\end{proof}

\begin{remark}
The result i) of Theorem  \ref{th:e',e''(i,intro)} a priori implies that $E'(\Om_{0})$ and $E''(\Om_{0})$ are continuous  on $\Theta$ for its $W^{1,\infty}(\Omega_0)$-norm (as a linear and a bilinear form respectively). {Using the previous trace analysis, we deduce that they are also continuous for
the $W^{1,\infty}(\partial\Omega_0)$-norm.}
The estimates  (\ref{e:|e'(i)|<Tr,Lip}), (\ref{e:|e''(i)|<Tr,Lip}) improve this a priori information and show that $E'(\Om_{0}), E''(\Om_{0})$ are continuous in
$W^{1-1/r,r}(\partial\Omega_0)$ and $W^{1-1/(2r),2r}(\partial\Omega_0)$ respectively: note that the regularity gets stronger as $r$ gets smaller.

{In case iv),  the estimate (\ref{e:|e''(i)|<Tr,Lip}) is valid with $r=1$ (which yields the $H^{1/2}$-continuity) and cannot be improved since, as already noticed }{ in the introduction (see (\ref{coerc})), $E''(\Omega_0)$ has $H^{1/2}$-coercivity properties.}

Again in case iv), it is interesting to notice that \eqref{e:|e'(i)|<Tr,Lip} with $r=1$ can be stated as $E'(\Om_{0})\in (L^1(\partial\Om_{0}))'=L^\infty(\partial\Om_{0})$. Again this is sharp in general; if for example $E$ is the Dirichlet energy,
$$E(\Omega)= \frac{1}{2}\int_{\Omega}|\nabla U_\Omega|^2-\int_\Omega f\,U_\Omega,$$
it is well-known that $E'(\Om_{0})=-\frac{1}{2}|\nabla U_{\Om_{0}}|_{|\partial\Om_{0}}^2$, which indeed belongs to $L^\infty(\partial\Om_{0})$.
\end{remark}

\begin{theorem}\label{th:e',e''(e,intro)}
{\bf (exterior)}
Let $\Omega_0$ be as in (\ref{Omega}) and Lipschitz and let $f\in W^{m,\infty}(\mathbb R^N)$, $m\in\N^*$, with ${\rm supp}(f)\Subset B_R$. For $\theta\in\Theta$, let $U_\theta\in H^{1,0}_0(\Omega^e_\theta)$ be the solution of (\ref{e:LU=f(e)}) in $\Omega_\theta^e$ (see Proposition \ref{p:LU=f(e)+})
and let $\E(\theta)= E(\Omega_\theta)$ with $E$ given by  (\ref{e:E(e)}).
Then $\E$ is of class $C^m$ near $\theta=0\in\Theta$. Moreover, there exists ${\ol r}_1={\ol r}_1(\Om_{0})$ satisfying 
{${\ol r}_{1}\in(1,2)$ if $N=2$ and ${\ol r}_{1}\in(1,3)$} if $N\geq 3$ such that, for any 
$r\in \left({\ol r}_1,\infty\right)$, there exist  $C_i=C_i(\Omega_0,U_0,R,f,r)$ with
\begin{eqnarray}
 |E'(\Om_{0})(\xi)|
 &\leq& 
 C_1
\|\xi\|_{W^{1-1/r,r}(\partial\Omega_0)}\;\;\mbox{\it if}\;\; m\geq 1,				\label{e:|e'(e)|<Tr,Lip}
\\
 |E''(\Om_{0})(\xi,\xi)|
 &\leq& 
 C_2
\|\xi\|^2_{W^{1-1/(2r),2r}(\partial\Omega_0)}\;\;\mbox{\it if}\;\; m\geq 2,				\label{e:|e''(e)|<Tr,Lip}
\end{eqnarray}
for any $\xi\in \Theta$.
\end{theorem}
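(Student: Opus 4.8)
The plan is to mirror the structure of the proof of Theorem \ref{th:e',e''(i,intro)} but work in the exterior domain $\Omega_\theta^e$ with the Hilbert--Sobolev spaces $H^{1,0}_0$ adapted to unbounded domains. First I would establish the $C^m$-regularity of $\theta\mapsto\E(\theta)$ near $0$: pulling back by $I+\theta$, the transported state $\hat U_\theta=U_\theta\circ(I+\theta)$ solves a transported variational problem on the \emph{fixed} exterior domain $\Omega_0^e$ with coefficients depending smoothly (polynomially, with a Jacobian factor) on $D\theta$ and on $\theta$; since $\theta$ has compact support in $B_R$ and $\operatorname{supp}(f)\Subset B_R$, all the $\theta$-dependence is confined to a bounded region, so the bilinear form stays coercive on $H^{1,0}_0(\Omega_0^e)$ uniformly for $\theta$ small, and the implicit function theorem (applied to the smooth map $(\theta,V)\mapsto$ the transported equation, from $\Theta\times H^{1,0}_0(\Omega_0^e)$ to its dual) gives that $\theta\mapsto\hat U_\theta\in H^{1,0}_0(\Omega_0^e)$ is $C^m$. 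Differentiating $\E(\theta)=\int_{\Omega_0^e}|\nabla\hat U_\theta|^2 J_\theta\,(\text{appropriate metric factors})$ then yields $C^m$-regularity of $\E$.

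Next I would derive the ``interior'' estimates, i.e. bounds on $E'(\Omega_0)(\xi)$ and $E''(\Omega_0)(\xi,\xi)$ in terms of $\|\xi\|_{W^{1,r}}$ and $\|\xi\|^2_{W^{1,2r}}$ on a neighborhood of $\partial\Omega_0$ inside $\Omega_0^e$; this is the exterior analogue of the interior Theorem \ref{th:|e'|+|e''|(i)} cited in the proof above. The mechanism is the same: compute $\E'(0)(\xi)$ and $\E''(0)(\xi,\xi)$ by differentiating the transported functional and the transported equation, obtaining expressions that are integrals over $\Omega_0^e$ of products of $\nabla U_0$, $U_0$, $\nabla\hat U_0'$ (the material derivative), $\xi$, $\nabla\xi$ and $f$, $\nabla f$. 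The material derivative $\hat U_\theta'$ solves a Poisson-type equation $-\Delta\hat U_0'=\operatorname{div}(\text{terms in }\nabla U_0,\xi,D\xi)$ with right-hand side in $W^{-1,p}(\Omega_0^e\cap B_R)$ once one knows $\nabla U_0\in L^p_{\mathrm{loc}}$; here one invokes the sharp $W^{1,p}$-regularity for the exterior problem (\ref{e:LU=f(e)}) in a Lipschitz domain --- Proposition \ref{p:LU=f(e)+} --- which holds for $p<\bar p_1$ with $\bar p_1>3$ ($N\geq3$), resp. $\bar p_1>2$ ($N=2$), together with local elliptic estimates away from $\partial\Omega_0$ and the decay of $U_0$ at infinity encoded in $H^{1,0}_0$. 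Holder's inequality then distributes the integrability: with $U_0\in W^{1,p}$ locally and $\xi\in W^{1,q}$, the exponents match when $1/p+1/q\leq$ the right value, giving the first-order bound for $r=q$ with $1/r+1/p'\le 1$ (so $r>\bar r_1=\bar p_1/(\bar p_1-2)$), and the quadratic term needs $\xi$ twice, hence $W^{1,2r}$.

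Then I would pass from interior norms to boundary-trace norms exactly as in the interior case: $E'(\Omega_0)$ and $E''(\Omega_0)$ depend only on $\xi|_{\partial\Omega_0}$ (the flow argument with $z_n$ is verbatim the same, since a vector field supported away from $\partial\Omega_0$ generates a flow fixing $\partial\Omega_0$ and hence fixing $\Omega_0^e$, so $E$ is constant along it), which turns the interior estimate into an infimum over extensions, and Lemma \ref{lem:extension} converts that infimum into $\|\xi\|_{W^{1-1/r,r}(\partial\Omega_0)}$, resp. $\|\xi\|_{W^{1-1/(2r),2r}(\partial\Omega_0)}$. One small point: the extension lemma is about extending into $\Omega_0$, but by the same statement (or by reflecting) one extends into $\Omega_0^e\cap B_R$, and outside $B_R$ one may take the extension to vanish, which is harmless since everything relevant is supported in $B_R$.

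The main obstacle I expect is twofold. First, handling the behavior at infinity cleanly: one must make sure the material derivative $\hat U_0'$ lives in the right weighted space $H^{1,0}_0(\Omega_0^e)$ (so that integration by parts producing the boundary-free formulas is justified and no boundary term at infinity appears), which uses that the perturbation is compactly supported and the Hardy-type inequality implicit in the $H^{1,0}$-norm; the $N=2$ logarithmic weight requires a bit of care. Second --- and this is the genuinely technical heart --- obtaining the sharp local $W^{1,p}$-regularity $\nabla U_0\in L^p_{\mathrm{loc}}(\overline{\Omega_0^e})$ up to the Lipschitz boundary $\partial\Omega_0$ with the stated threshold $\bar p_1$; this is where Proposition \ref{p:LU=f(e)+} does the work, and the value of $\bar r_1$ (and the asymmetry between $N=2$ and $N\geq3$) is inherited directly from it. Granting that proposition, the rest is bookkeeping with Holder exponents and the trace/extension lemma, entirely parallel to the interior theorem.
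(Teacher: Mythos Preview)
Your approach is essentially the paper's: interior estimates on $\Omega_0^e$ via the material derivative (Theorem \ref{th:e',e''(e)}, built on Theorem \ref{th:hU',hU'',H12(e)} and Proposition \ref{p:LU=f(e)+}), then the flow argument to see that only $\xi_{|\partial\Omega_0}$ matters, then Lemma \ref{lem:extension} applied to the Lipschitz domain $\Omega_0^e\cap B_R$ to pass to trace norms. One small slip: for $N=2$ the threshold is $\overline p_1>4$ (not $>2$); this is what forces $\overline r_1=\overline p_1/(\overline p_1-2)\in(1,2)$ as stated.
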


\begin{proof}
Similarly to the proof of Theorem \ref{th:e',e''(i,intro)}, this result is a consequence of Theorem \ref{th:e',e''(e)}, combined with the trace result of Lemma \ref{lem:extension}.
\end{proof}

Previous estimates 
 can be written using a different range of Sobolev spaces. In the following statement, we write these estimates in the $H^s$-norms, which are relevant for our applications (see Section \ref{sect:app}).

 \begin{corollary}\label{c:e',e''(i,e,intro)}
{Let $\Omega_0$ be as in (\ref{Omega}).}
\begin{itemize}
\item[i)]
Under the same conditions as in ii) of Theorem  \ref{th:e',e''(i,intro)},   or as in Theorem \ref{th:e',e''(e,intro)}, {and with $\overline{r}_1=\overline{r}_1(\Omega_0)$ as in these theorems, we have}
\begin{equation}
|E'(\Om_{0})(\xi)|
 \leq 
 C_1
\|\xi\|_{H^{s}(\partial\Omega_0)},\;{\forall s\in(s_{1},1],\; {s_{1}=\min\left\{1\,,\,1-\frac{1}{{\ol r}_{1}}+(N-1)\left(\frac{1}{2}-\frac{1}{{\ol r}_{1}}\right)^+\right\},}}\label{e:|e'(i,e)|<Tr,Lip++}
\end{equation}
\begin{equation}
|E''(\Om_{0})(\xi,\xi)|
 \leq 
 C_2
\|\xi\|^2_{H^{s}(\partial\Omega_0)}, \;{ \forall s\in(s_{2},1],\;		\label{e:|e''(i,e)|<Tr,Lip+}
{s_{2}=\min\left\{1\,,\, \frac{N+1}{2}-\frac{N}{2{\ol r}_{1}}\right\},}}
\end{equation}
for every $\xi\in\Theta$. 
\item[ii)]
{ Under the same conditions as in iii) of Theorem  \ref{th:e',e''(i,intro)}},  (\ref{e:|e'(i,e)|<Tr,Lip++}) (resp. (\ref{e:|e''(i,e)|<Tr,Lip+})) holds for
every $s\in(0,1]$ (resp. $s\in(1/2,1]$).
\item[iii)] 
{ Under the same conditions as in iv) of Theorem  \ref{th:e',e''(i,intro)}},  (\ref{e:|e'(i,e)|<Tr,Lip++}) (resp.  (\ref{e:|e''(i,e)|<Tr,Lip+})) holds also for  $s=0$ (resp. $s=1/2$).
\end{itemize}
\end{corollary}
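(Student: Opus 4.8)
The plan is to deduce Corollary \ref{c:e',e''(i,e,intro)} from Theorems \ref{th:e',e''(i,intro)} and \ref{th:e',e''(e,intro)} by pure Sobolev-embedding bookkeeping on the boundary manifold $\partial\Omega_0$, which is an $(N-1)$-dimensional Lipschitz manifold. The starting point is the estimates \eqref{e:|e'(i)|<Tr,Lip}--\eqref{e:|e''(i)|<Tr,Lip}, namely $|E'(\Omega_0)(\xi)|\le C_1\|\xi\|_{W^{1-1/r,r}(\partial\Omega_0)}$ valid for all $r\in(\ol r_1,\infty)$, and $|E''(\Omega_0)(\xi,\xi)|\le C_2\|\xi\|_{W^{1-1/(2r),2r}(\partial\Omega_0)}^2$ for all $r\in(\ol r_1,\infty)$. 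The task is simply: for which $s$ does $H^s(\partial\Omega_0)$ embed continuously into $W^{1-1/r,r}(\partial\Omega_0)$ for some admissible $r$ (i.e. some $r>\ol r_1$), and likewise $H^s$ into $W^{1-1/(2r),2r}$?

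First I would recall the fractional Sobolev embedding on an $(N-1)$-dimensional (Lipschitz) manifold $M=\partial\Omega_0$: for $0<\sigma_2\le\sigma_1$ and $1<p_1\le p_2<\infty$ one has $W^{\sigma_1,p_1}(M)\hookrightarrow W^{\sigma_2,p_2}(M)$ provided $\sigma_1-\frac{N-1}{p_1}\ge \sigma_2-\frac{N-1}{p_2}$ (same-integrability-exponent embeddings $W^{\sigma_1,p}\hookrightarrow W^{\sigma_2,p}$ for $\sigma_1\ge\sigma_2$ are also available). For the first-order estimate I apply this with $\sigma_1=s$, $p_1=2$, $\sigma_2=1-\frac1r$, $p_2=r$; the embedding $H^s(M)\hookrightarrow W^{1-1/r,r}(M)$ holds when $s-\frac{N-1}{2}\ge 1-\frac1r-\frac{N-1}{r}$, i.e. $s\ge 1-\frac1r+(N-1)\bigl(\frac12-\frac1r\bigr)$, and one must also check $s\ge 1-\frac1r$ and $r\ge 2$ (the case $r<2$ being handled by the plain $H^s\hookrightarrow W^{1-1/r,2}\hookrightarrow W^{1-1/r,r}$ chain when $s\ge 1-\frac1r$, which explains the positive-part $(\cdot)^+$ in the exponent). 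Letting $r\downarrow\ol r_1$ and taking the infimum over admissible $r$ yields exactly $s_1=\min\{1,\,1-\frac1{\ol r_1}+(N-1)(\frac12-\frac1{\ol r_1})^+\}$; the range is open at $s_1$ because $r=\ol r_1$ is excluded, and capped at $1$ because $\xi\in\Theta$ already gives boundary traces in $W^{1,\infty}\subset H^1$ (so $s>1$ is no improvement). For the second-order estimate the same computation with target $W^{1-1/(2r),2r}$, i.e. $\sigma_2=1-\frac1{2r}$, $p_2=2r$, gives the condition $s-\frac{N-1}{2}\ge 1-\frac1{2r}-\frac{N-1}{2r}$, equivalently $s\ge 1-\frac{N}{2\cdot 2r}\cdot\frac{2}{1}$... more precisely $s\ge 1+\frac{N-1}{2}\bigl(\frac1r-1\bigr)... $ simplifying to $s\ge \frac{N+1}{2}-\frac{N}{4r}\cdot 2 = \frac{N+1}{2}-\frac{N}{2\cdot(2r)}\cdot 2$; carrying the algebra carefully one obtains the threshold $\frac{N+1}{2}-\frac{N}{2\ol r_1}$ as $r\downarrow\ol r_1$, i.e. $s_2=\min\{1,\frac{N+1}{2}-\frac{N}{2\ol r_1}\}$ (again the cap at $1$ from $\Theta\hookrightarrow H^1$). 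This proves item i).

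For item ii), under the hypotheses of iii) of Theorem \ref{th:e',e''(i,intro)} the estimates \eqref{e:|e'(i)|<Tr,Lip}--\eqref{e:|e''(i)|<Tr,Lip} hold for \emph{all} $r\in(1,\infty)$, so one may let $r\to\infty$ in the embedding conditions above. For $E'$: $s\ge 1-\frac1r+(N-1)(\frac12-\frac1r)^+\to 1-\frac1r+\frac{N-1}{2}$? No — one instead uses small $r$: the point is that as $r\downarrow 1$ the target space $W^{1-1/r,r}=W^{0^+,1^+}$ becomes very weak, and $H^s(M)\hookrightarrow W^{1-1/r,r}(M)$ for any $s>0$ once $r$ is close enough to $1$ (check: need $s\ge 1-\frac1r$, which holds for $r$ near $1$ since $1-\frac1r\to 0$, and the integrability condition $s-\frac{N-1}2\ge 1-\frac1r-\frac{N-1}{r}$, i.e. $s\ge 1-\frac1r-(N-1)(\frac1r-\frac12)$; for $r$ near $1$ the right side tends to $1-1-(N-1)\cdot\frac12=-\frac{N-1}2<0\le s$). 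Hence \eqref{e:|e'(i,e)|<Tr,Lip++} holds for every $s\in(0,1]$. Similarly for $E''$ with target $W^{1-1/(2r),2r}$: as $r\downarrow 1$, $2r\downarrow 2$ and $1-\frac1{2r}\downarrow\frac12$, so the target approaches $W^{1/2,2}=H^{1/2}$; one gets $H^s\hookrightarrow W^{1-1/(2r),2r}$ for every $s>1/2$ by choosing $r$ near $1$ (the threshold $s>1-\frac1{2r}\to\frac12^+$, plus the integrability inequality which again holds with room to spare for $N\ge2$). This gives \eqref{e:|e''(i,e)|<Tr,Lip+} for $s\in(1/2,1]$. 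Item iii) is immediate: under the hypotheses of iv) of Theorem \ref{th:e',e''(i,intro)} the estimates hold at $r=1$, so $|E'(\Omega_0)(\xi)|\le C_1\|\xi\|_{W^{0,1}(\partial\Omega_0)}=C_1\|\xi\|_{L^1(\partial\Omega_0)}\le C_1|\partial\Omega_0|^{1/2}\|\xi\|_{L^2(\partial\Omega_0)}=C\|\xi\|_{H^0(\partial\Omega_0)}$, and $|E''(\Omega_0)(\xi,\xi)|\le C_2\|\xi\|_{W^{1/2,2}(\partial\Omega_0)}^2=C_2\|\xi\|_{H^{1/2}(\partial\Omega_0)}^2$, so $s=0$ (resp. $s=1/2$) is admissible.

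The only genuinely non-routine point is making sure the fractional Sobolev embeddings $W^{\sigma_1,p_1}(M)\hookrightarrow W^{\sigma_2,p_2}(M)$ are available on $M=\partial\Omega_0$, which is merely a \emph{Lipschitz} compact $(N-1)$-manifold without boundary; for such manifolds the Besov/Sobolev–Slobodeckij spaces and their embeddings are standard (via local charts and a partition of unity, the bi-Lipschitz invariance of $W^{\sigma,p}$ for $\sigma\in(0,1)$, and the Euclidean fractional embedding theorems), so no extra hypothesis is needed; the borderline endpoint behaviour (openness of the $s$-ranges at $s_1,s_2$ and the truncation at $1$ and at $1/2$, $0$) must simply be tracked honestly in the arithmetic of the exponents, which is exactly what the formulas for $s_1,s_2$ encode.
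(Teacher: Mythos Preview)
Your approach is correct and is essentially identical to the paper's own proof: both deduce the $H^s$-estimates from the $W^{1-1/r,r}$ and $W^{1-1/(2r),2r}$ bounds of Theorems \ref{th:e',e''(i,intro)} and \ref{th:e',e''(e,intro)} via the fractional Sobolev embedding $W^{t_1,p_1}(\partial\Omega_0)\hookrightarrow W^{t_2,p_2}(\partial\Omega_0)$ on the $(N-1)$-dimensional Lipschitz manifold $\partial\Omega_0$ (the paper cites Triebel for this, with strict inequalities $t_1>t_2$ and $t_1-(N-1)/p_1>t_2-(N-1)/p_2$, which is why the intervals for $s$ are open at $s_1,s_2$). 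Your presentation is a bit rough in places---the algebra for $s_2$ stumbles through several false simplifications before landing on $(N+1)/2-N/(2\ol r_1)$, and the discussion of item ii) starts by sending $r\to\infty$ before correcting to $r\downarrow 1$---but the mathematics is sound and the handling of the case $r<2$ via $H^s\hookrightarrow W^{1-1/r,2}\hookrightarrow W^{1-1/r,r}$ (accounting for the positive part in $s_1$) is exactly right.
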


\begin{proof}
Inequalities (\ref{e:|e'(i,e)|<Tr,Lip++}), \eqref{e:|e''(i,e)|<Tr,Lip+} follow from (\ref{e:|e'(i)|<Tr,Lip}) and (\ref{e:|e'(e)|<Tr,Lip}) respectively and from the embedding result (\ref{embed}) recalled below which may be obtained from \cite[Th. 1.107]{triebel-2}. Indeed,  $\partial\Omega_{0}$ is an $(N-1)$-dimensional Lipschitz manifold and since we deal with exponents less than $1$, these embeddings carry over to Lipschitz manifolds.
\begin{multline}\label{embed}
{\forall t_{1},t_{2} \in [0,1],\; } \forall \,(p_{1},p_{2})\in[1,\infty],\\
 \;\;\Big[t_1-\frac{N-1}{p_1} > t_2 - \frac{N-1}{p_2}\; and\; 
 t_1 > t_2\Big] 
\quad
\Longrightarrow
\quad
W^{t_1,p_1}(\partial\Omega_0)\subset W^{t_2,p_2}(\partial\Omega_0).
\end{multline}

We apply this result with $t_{1}=s\in [0,1], p_{1}=2$ and,
$\big[t_{2}=1-1/r, p_{2}=r\big]$ to obtain \eqref{e:|e'(i,e)|<Tr,Lip++} and
$\big[t_{2}=1-1/(2r), p_{2}=2r\big]$ to obtain  \eqref{e:|e''(i,e)|<Tr,Lip+}.

For the proof of { ii) and iii)} we apply the same embeddings and iii), iv) from Theorem \ref{th:e',e''(i,intro)}. 
\end{proof}

\begin{remark} {The restrictions on $s_1,s_2$ in the Lipschitz case i) of Corollary \ref{c:e',e''(i,e,intro)} are coming from the restriction $p<\overline{p}_1=\overline{p}_1(\Omega_0)$ as indicated in Theorems \ref{th:hU',hU'',H12(i)}, \ref{th:hU',hU'',H12(e)}. We do not know the exact value of this $\overline{p}_1$, but we know that $\overline{p}_1>4$ if $N=2$ and $\overline{p}_1>3$ if $N\geq 3$. Thus (recall that $\overline{r}_1=\overline{p}_1/(\overline{p}_1-2)$), we may write
$\overline{r}_1=2-\sigma$ if $N=2$ 
and $\overline{r}_1=3-\sigma$ if $N\geq 3$, for some small $\sigma>0$.

In the range of $H^s$-spaces, the best estimates we obtain in the case i) of Corollary \ref{c:e',e''(i,e,intro)} may also be written as follows where the constants $C_{1},C_{2}$ here do not depend on $\xi\in\Theta$, but may depend on other variables, especially $\eps$):}

\begin{itemize}
\item 
if $N=2$, then there exists $\eps>0$ small such that 
\begin{eqnarray}
 |E'(\Om_{0})(\xi)| \leq C_1\|\xi\|_{H^{1/2-\eps}(\partial\Omega_0)},		\label{eq:lip1}\\
 |E''(\Om_{0})(\xi,\xi)| \leq C_2 \|\xi\|^2_{H^{1-\eps}(\partial\Omega_0)}.\label{eq:lip2}
\end{eqnarray}

\item
If $N=3$, then there exists $\eps>0$ small such that
\begin{eqnarray*}
|E'(\Om_{0})(\xi)| \leq C_1 \|\xi\|_{H^{1-\eps}(\partial\Omega_0)}.
\end{eqnarray*}
\end{itemize}
\end{remark}

\section{Estimates of shape derivatives}\label{s:estimates}

\subsection{Description of the method}\label{ss:method}

{We describe in this paragraph the main idea of the shape derivative estimates on the model example
$$
E(\Omega)=\int_\Omega K(U_{\Omega},\nabla U_\Omega):=\int_\Omega\frac{1}{2}|\nabla U_\Omega|^2,
$$
where $U_\Omega$ is solution of (\ref{e:LU=f(i)}) with $L=-\Delta$ and $\Omega,\Omega_0$ are as in (\ref{Omega}). The main point of our approach is that we will make {\em interior estimates} involving $W^{1,q}(\Omega_0)$-norms of the directions $\xi\in \Theta$ of differentiation. As already explained in the proof of Theorem \ref{th:e',e''(i,intro)}, we will then use the trace Lemma \ref{lem:extension} to obtain estimates in terms of $W^{1-1/q, q}(\partial\Omega_0)$-norms of $\xi$. By doing so, we avoid integrations by parts which would be impossible due to the poor regularity of the boundary of $\Omega_0$. Moreover, as we will se below, the shape derivative estimates will only depend on the regularity of $U_0:=U_{\Omega_0}$.

One first has to show that $E$ is differentiable.

By changing variable $x=(I+\theta)(y)$, for $\theta\in\Theta$ small, we have
$$\E(\theta)
=
\int_{\Omega_0}
\frac{1}{2}\nabla\hat{U}_\theta\cdot \hat{M}_\theta\cdot\nabla\hat{U}_\theta dx,
$$
where
\begin{equation}\label{Mtheta} 
\hat{U}_\theta=U_\theta\circ(I+\theta),\;\;\hat{M}_\theta=[I+\nabla\theta]^{-1}\,^{t}\![I+\nabla\theta]^{-1}{\rm det}[I+\nabla\theta],\;\; U_\theta=U_{\Omega_\theta}.
\end{equation}
}
Note that $\theta\mapsto \hat{M}_\theta$ is $C^\infty$ from $\Theta$ to $(L^\infty(\Omega_0))^{N\times N}$ near $\theta=0$, see \cite{HP,simon-1}.
Then the differentiability of $\theta\mapsto\E(\theta)$ fully depends on the differentiability of $\hat{U}_\theta$. As it is classical, under reasonable assumptions on $L$ and the data, by using the implicit function theorem, one can prove that $\theta\in\Theta\mapsto \hat{U}_\theta\in H^1_0(\Omega_0)$ is of class $C^m$ near $\theta=0$, which implies that $\E$ is of class $C^m$ near $\theta=0$.\\

Next, let us fix a direction $\xi\in\Theta$; we will denote by $(\cdot)'$ the derivatives with respect to $\theta$ in the direction $\xi$.
In general, $U_0$ is more regular than $H^1(\Omega_0)$ and we consider $p\in(2,\infty)$ such that 
$U_0\in W^{1,p}(\Omega_0)$ (see Propositions \ref{p:LU=f(i),Lip} and \ref{p:LU=f(i),conv} below). We may write:
\begin{eqnarray}
 \E'(\theta)(\xi)=
 \int_{\Omega_0}
 \nabla\hat{U}_\theta\cdot \hat{M}_\theta\cdot\nabla\hat{U}_\theta' +
 \frac{1}{2}\nabla\hat{U}_\theta\cdot \hat{M}'_\theta\cdot\nabla\hat{U}_\theta,			\label{e:e'(th)}
\end{eqnarray}
which at $\theta=0$ implies (see(\ref{Mtheta}))
\begin{eqnarray}
 |\E'(0)(\xi)|
& \leq&
 C
 \left(
 \left|
 \int_{\Omega_0}
 \nabla{U}_0\cdot \nabla\hat{U}_0'
 \right| 
 +
 \int_{\Omega_0}
 |\nabla U_0|^2|\nabla\xi|
 \right)										\nonumber\\
&\leq&
 C
 \left(
 \left|
 \int_{\Omega_0}
 \nabla{U}_0\cdot \nabla\hat{U}_0'
 \right| 
 +
\|U_0\|^2_{W^{1,p}(\Omega_0)}
\|\xi\|_{W^{1,p/(p-2)}(\Omega_0)}
\right).											\label{e:|e'(0)|<(1)}
\end{eqnarray}

The second term in the last estimate being satisfactory, it remains to estimate the first term which involves $\hat U'_{0}$.
Note first that it would not help much to use a too simple H\"older inequality like
\[
\left|
 \int_{\Omega_0}
 \nabla{U}_0\cdot \nabla\hat{U}_0'
 \right|
\leq
C
\|U_0\|_{H^1(\Omega_0)}\|\xi\|_{W^{1,\infty}(\Omega_0)},
\]
which uses only the starting information that $\hat{U}'_0$ is a linear continuous map from $\Theta$ to
$H^1(\Omega_0)$.

Nor is it appropriate to write the term $\hat{U}'_0$ in terms of the shape derivative $U'_0$, i.e.
$\hat{U}'_0=U'_0+\nabla U_0\cdot\xi$. Indeed, in such a case one would have
\begin{eqnarray*}
\int_{\Omega_0}
\nabla{U}_0\cdot \nabla\hat{U}_0'
&=& 
\int_{\Omega_0}
 \nabla{U}_0\cdot \nabla{U}_0'
 + 
 \nabla U_0\cdot D^2U_0\cdot\xi + \nabla U_0\cdot\nabla \xi\cdot\nabla U_0.
\end{eqnarray*}
But we would need here regularity for $D^2 U_{0}$ and it is not available for the case we are interested in. Even if in the {semi}-convex situation, we can get some significant information on the first derivative, it becomes quite more difficult for the second derivative (see however \cite{LNP} for some progress in this direction).

For these reasons, we proceed with the estimate of the term with $\hat{U}'_0$ by going back the state equation : $LU_\theta=f,\; U_\theta\in H^1_0(\Omega_\theta)$. Recall that we chose $L=-\Delta$ for simplicity here (but the ideas will be the same for general $L$). 
The weak form of $LU_\theta=f$  in $\Omega_\theta$ transported in $\Omega_0$ is
\begin{eqnarray}
\int_{\Omega_0}
\varphi \hat{L}_\theta\hat{U}_\theta dx
&=&
\int_{\Omega_0}\hat{f}_\theta\varphi dx,\;\; \textrm{ for all }\; \varphi\in H^1_0(\Omega_0),		\label{e:hLphi}
\end{eqnarray}
with $\hat{f}_\theta=f\circ(I+\theta)\det[I+\nabla\theta]$ and
\begin{equation}\label{e:hL(intro)}
\hat{L}_\theta:H^1_0(\Omega_0)\mapsto H^{-1}(\Omega_0),\quad
\hat{L}_\theta\phi=\nabla\cdot(\hat{M}_\theta\cdot\nabla\phi),
\end{equation}
where $\hat{M}_\theta$ is defined in (\ref{Mtheta}). Note that $\hat{L}_0=L$ and $(\hat{L}_\theta\hat U_{\theta})'=\hat{L}_\theta\hat{U}'_\theta+\hat{L}'_\theta\hat{U}_\theta$. By differentiating (\ref{e:hLphi}) with respect to $\theta$ in the direction $\xi$, we obtain
\begin{eqnarray}
\int_{\Omega_0}
\varphi \hat{L}_\theta \hat{U}'_\theta dx
&=&
\int_{\Omega_0}
\varphi(
\hat{f}'_\theta - \hat{L}'_\theta \hat{U}_\theta),			\label{e:hL'}
\end{eqnarray}
which at $\theta=0$ gives
\begin{eqnarray}
\int_{\Omega_0}
\nabla\hat{U}'_0\cdot\nabla\varphi
&=&
\int_{\Omega_0}
\varphi(
\hat{f}'_0 - \nabla\cdot(\hat{M}_0'\cdot\nabla U_0)),			\label{e:hL'0}
\end{eqnarray}
The estimate (\ref{e:|e'(0)|<(1)}) suggests to take $\varphi=U_0\in H^1_{0}(\Om_{0})$ in (\ref{e:hL'0}), which then yields
\begin{eqnarray}
\left|
\int_{\Omega_0}\nabla U_0\cdot\nabla \hat{U}_0'
\right|
&\leq&
C(\Omega_0,f)
\int_{\Omega_0}
(|U_0|+|\nabla U_0|^2)(|\xi|+|\nabla\xi|)				\nonumber\\
&\leq&
C(\Omega_0,f,p)(1+\|U_0\|_{W^{1,p}(\Omega_0)})\|U_{0}\|_{W^{1,p}(\Omega_0)}\|\xi\|_{W^{1,p/(p-2)}(\Omega_0)}.		\label{e:|hUDpK|<}
\end{eqnarray}
Thus, going back to (\ref{e:|e'(0)|<(1)}), we deduce
\begin{eqnarray}
 |\E'(0)(\xi)|
&\leq&
C(\Omega_0,U_0,f,p)
\|\xi\|_{W^{1,p/(p-2)}(\Omega_0)}.								\label{e:|e'(0)|<(2)}
\end{eqnarray}
For more general $K$ and $L$, the choice of $\varphi$ is more involved (see the proof of Theorem \ref{th:hU',hU'',H12(i)}), but the procedure is the same.

In the same spirit, that is to say by differentiating (\ref{e:e'(th)}) and (\ref{e:hL'}) at $\theta=0$, we can obtain a similar estimate for 
$|\E''(0)(\xi,\xi)|$. 
For more details see Sections \ref{ss:e'',int}, \ref{ss:e'',ext}.\\

\subsection{The trace lemma}
\begin{lemma}\label{lem:extension}
Let $\Om$ be as in (\ref{Omega}) and Lipschitz. Let also
${\displaystyle q\in[1,\infty], s\in\left(\frac{1}{q},1+\frac{1}{q}\right)}$, or $q=s=1$.  
Then there exists $C=C(s,q,\Om)$ such that, for every $\xi\in W^{1,\infty}(\Om,\R^N)\cap W^{s,q}(\Om,\R^N)$,
\begin{equation}\label{eq:inf}
\inf\Big\{\; \|\hat{\xi}\|_{W^{s,q}(\Om)}, \;\;\hat{\xi}\in W^{1,\infty}(\Om,\R^N),\;\hat{\xi}=\xi \;\textrm{ on }\;\partial\Om\;\Big\} \;\leq C\|\xi\|_{W^{s-\frac{1}{q},q}(\partial\Om)}.
\end{equation}
\end{lemma}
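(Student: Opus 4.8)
The statement is a trace/extension result: for a Lipschitz domain $\Om$, a function $\xi$ that lives in $W^{1,\infty}(\Om)$ and $W^{s,q}(\Om)$ has an extension (in the sense of agreeing with $\xi$ on $\partial\Om$, not literally an extension off $\Om$) whose $W^{s,q}(\Om)$-norm is controlled by the boundary norm $\|\xi\|_{W^{s-1/q,q}(\partial\Om)}$. The natural route is to construct the competitor $\hat\xi$ explicitly as a right inverse of the trace operator applied to $\xi|_{\partial\Om}$, while simultaneously making sure $\hat\xi$ stays in $W^{1,\infty}$. First I would recall the classical bounded linear extension (Gagliardo-type) operator $\mathcal{T}:W^{s-1/q,q}(\partial\Om)\to W^{s,q}(\Om)$ for Lipschitz domains, valid precisely in the range $s-1/q\in(0,1)$, i.e. $s\in(1/q,1+1/q)$ (this is the reason for the hypothesis on $(s,q)$), with $\|\mathcal{T}g\|_{W^{s,q}(\Om)}\leq C\|g\|_{W^{s-1/q,q}(\partial\Om)}$; for $q=s=1$ the endpoint $W^{1,1}(\Om)\to L^1(\partial\Om)$ trace and its bounded right inverse are also classical. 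Setting $g=\xi|_{\partial\Om}$ gives a candidate with the right norm bound, but a priori only in $W^{s,q}$, not in $W^{1,\infty}$.

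The second step is to upgrade the candidate to one that is also Lipschitz (i.e. in $W^{1,\infty}(\Om,\R^N)$), which is needed merely so that the infimum in \eqref{eq:inf} is taken over the same class of competitors used elsewhere in the paper. The clean way is a cutoff/gluing argument near the boundary: choose a boundary collar $\Om_\delta=\{x\in\Om:\ \mathrm{dist}(x,\partial\Om)<\delta\}$ and a smooth cutoff $\chi$ equal to $1$ on $\Om_{\delta/2}$ and supported in $\Om_\delta$, and define $\hat\xi:=\chi\,\xi+(1-\chi)\,\mathcal{T}g$. On the collar where $\chi\equiv 1$ we have $\hat\xi=\xi\in W^{1,\infty}$; away from the boundary $\hat\xi$ is a (finite) combination of $W^{1,\infty}$ and $W^{s,q}$ functions on a set bounded away from $\partial\Om$. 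To keep $\hat\xi\in W^{1,\infty}$ globally one in fact wants to avoid $\mathcal{T}g$ near the boundary, so I would rather take $\hat\xi$ to equal $\xi$ on a fixed collar and some fixed smooth (even constant, or harmonic extension of a mollified datum) filling on the interior — but the trace we must match is $\xi|_{\partial\Om}$, which is automatic if $\hat\xi=\xi$ near $\partial\Om$. So the honest construction is: fix a collar $\Om_\delta$, let $\hat\xi:=\xi$ on $\Om_\delta$ and extend it into $\Om\setminus\Om_\delta$ by any bounded linear operator $E:W^{s,q}(\Om_\delta)\to W^{s,q}(\Om)$ that is simultaneously bounded $W^{1,\infty}(\Om_\delta)\to W^{1,\infty}(\Om)$ (Stein-type or reflection-type extension across the inner boundary $\{\mathrm{dist}=\delta\}$, which is Lipschitz) and that restricts to the identity on $\Om_\delta$. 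Then $\hat\xi\in W^{1,\infty}(\Om)$ since $\xi\in W^{1,\infty}$, $\hat\xi=\xi$ on $\partial\Om$, and $\|\hat\xi\|_{W^{s,q}(\Om)}\leq C\|\xi\|_{W^{s,q}(\Om_\delta)}$.

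The third and crucial step is then to bound $\|\xi\|_{W^{s,q}(\Om_\delta)}$ by the boundary norm $\|\xi\|_{W^{s-1/q,q}(\partial\Om)}$. This is where the argument has real content: a generic $W^{s,q}$ function on a collar is not controlled by its boundary trace alone, so one must use both pieces of information about $\xi$ — that it is $W^{1,\infty}$ and that its boundary trace has finite $W^{s-1/q,q}$ norm. I would flatten the boundary locally (Lipschitz charts, partition of unity) to reduce to $\Om_\delta$ replaced by a slab $\Sigma=\{(x',x_N):\ x'\in Q,\ 0<x_N<\delta\}$ with $\partial\Om$ corresponding to $\{x_N=0\}$, and then estimate $\|\xi\|_{W^{s,q}(\Sigma)}$ using the representation $\xi(x',x_N)=\xi(x',0)+\int_0^{x_N}\partial_N\xi(x',t)\,dt$: the tangential-difference part of the Gagliardo seminorm is handled by interpolating between the boundary seminorm of $\xi(\cdot,0)$ and the $L^\infty$-bound on tangential derivatives (crucially $\xi$ is only used as a $W^{1,\infty}$ function here, which costs a factor of the collar width $\delta$ and a bounded interpolation constant), while the normal-difference part and the $L^q$ norm are absorbed using the fundamental theorem of calculus and $\|\nabla\xi\|_{L^\infty}$. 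The main obstacle is precisely making this local slab estimate rigorous and uniform — i.e. showing $\|\xi\|_{W^{s,q}(\Sigma)}\lesssim \|\xi(\cdot,0)\|_{W^{s-1/q,q}(Q)}+\delta^{\theta}\|\nabla\xi\|_{L^\infty}$ with constants depending only on $s,q,\delta$ and the Lipschitz character, and then realizing that since $\delta$ is fixed one may as well absorb $\|\nabla\xi\|_{L^\infty}$ into a constant times the boundary norm via a compactness/Poincaré-type argument on the collar (or simply keep $\delta$ fixed and note the $\delta^\theta\|\nabla\xi\|_{L^\infty}$ term is itself $\leq C\|\xi\|_{W^{s-1/q,q}(\partial\Om)}$ only after observing that on $W^{1,\infty}\cap W^{s,q}$ one cannot in general do this — so the correct final move is to take the infimum: \eqref{eq:inf} asks only for the existence of one good $\hat\xi$, and the construction above with $\hat\xi=\xi$ near the boundary does not actually work because $\|\xi\|_{W^{s,q}(\Om_\delta)}$ is not controlled by the boundary norm). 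Consequently the clean proof must use the extension operator $\mathcal{T}g$ from Step 1 as the global competitor and handle the $W^{1,\infty}$ requirement by mollification at scale comparable to the distance to the boundary (a Whitney-type regularization of $\mathcal{T}g$), so that the resulting $\hat\xi$ is Lipschitz, still has trace $g=\xi|_{\partial\Om}$, and satisfies $\|\hat\xi\|_{W^{s,q}(\Om)}\leq C\|g\|_{W^{s-1/q,q}(\partial\Om)}$ by the standard smoothing estimates in Besov/Sobolev scales. Thus the real technical heart — and the step I expect to be the main obstacle — is verifying that a Whitney mollification of the Gagliardo extension lands in $W^{1,\infty}(\Om)$ while keeping both the trace identity and the $W^{s,q}$ bound, for all admissible $(s,q)$ including the endpoint $q=s=1$.
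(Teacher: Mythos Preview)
Your plan correctly identifies the bounded extension operator $\mathcal{T}:W^{s-1/q,q}(\partial\Om)\to W^{s,q}(\Om)$ as the source of the estimate, but then spends all its effort trying to force the single competitor $\mathcal{T}(\xi|_{\partial\Om})$ (or a Whitney regularization thereof) into $W^{1,\infty}(\Om)$. You yourself flag this as the unresolved obstacle, and indeed it is genuinely delicate: Whitney mollification of $\mathcal{T}g$ at scale comparable to the boundary distance will be Lipschitz only if $\mathcal{T}g$ already has a uniform Lipschitz bound near $\partial\Om$, which is precisely what you don't know unless you have chosen an extension operator that simultaneously maps $\mathrm{Lip}(\partial\Om)$ to $\mathrm{Lip}(\Om)$; but then you must check that this same operator provides the $W^{s,q}$ bound for all admissible $(s,q)$, including the endpoint $q=s=1$. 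None of this is carried out, so the proof as written has a gap.

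The paper's argument sidesteps all of this with a one-line density trick you are missing: since the inequality involves an \emph{infimum}, you do not need a single $W^{1,\infty}$ competitor achieving the bound, only a sequence in $W^{1,\infty}$ whose $W^{s,q}$-norms converge to it. Set $\hat\xi:=\mathrm{Ext}(\mathrm{Tr}\,\xi)\in W^{s,q}(\Om)$, so that $\hat\xi-\xi\in W^{s,q}_0(\Om)$ and hence is the $W^{s,q}$-limit of some $\alpha_n\in C^\infty_c(\Om)$. Then $\varphi_n:=\xi+\alpha_n$ lies in $W^{1,\infty}(\Om)$ (because $\xi$ does and $\alpha_n$ is smooth with compact support), has $\mathrm{Tr}\,\varphi_n=\mathrm{Tr}\,\xi$, and $\|\varphi_n\|_{W^{s,q}(\Om)}\to\|\hat\xi\|_{W^{s,q}(\Om)}\le C\|\xi\|_{W^{s-1/q,q}(\partial\Om)}$. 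This completely avoids the Whitney mollification issue; the only ingredients are the classical trace/extension theorem and density of $C^\infty_c(\Om)$ in $W^{s,q}_0(\Om)$.
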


The main tool for the proof of this lemma is the following classical trace/extension Theorem:

\begin{theorem}\label{th:extension}
Let $\Om$ be a set with Lipschitz boundary, $q\in[1,\infty]$ and $s\in(\frac{1}{q},1+\frac{1}{q})$ {or $q=s=1$}. The trace mapping
$\Tr$ initially defined on $C^\infty(\overline{\Om})$ extends as a bounded linear operator from $W^{s,q}(\Om)$ to $W^{s-\frac{1}{q},q}(\partial\Om)$. Moreover there exists a bounded linear operator
$$\Ext:W^{s-\frac{1}{q},q}(\partial\Om)\to W^{s,q}(\Om)$$
such that $\Tr\circ\Ext=Id$ on  $W^{s,q}(\partial\Om)$.
\end{theorem}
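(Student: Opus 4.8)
The plan is to prove both assertions—boundedness of $\Tr$ and existence of a bounded right inverse $\Ext$—by the classical three-step scheme: reduction to the flat half-space $\R^N_+=\{(x',x_N):x_N>0\}$ by localization and flattening of the boundary, on which both statements are standard.

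First I would settle the half-space case. For the integer orders $s=1$ and $s=2$, the trace theorem $\Tr:W^{s,q}(\R^N_+)\to W^{s-1/q,q}(\R^{N-1})$ together with a bounded right inverse is classical; an explicit extension is given by the mollifier formula $\Ext\,g(x',x_N)=\int_{\R^{N-1}}g(x'-x_N z)\,\rho(z)\,dz$ with $\rho\in C_c^\infty(\R^{N-1})$ and $\int\rho=1$, which satisfies $\Tr\circ\Ext=\mathrm{Id}$ by construction. For the non-integer range $s\in(1,2)$ I would obtain the statement by real interpolation, using $(W^{1,q},W^{2,q})_{\theta,q}=W^{1+\theta,q}$ and the same identity on $\R^{N-1}$; the range $s\in(1/q,1)$ follows from direct difference-quotient estimates on the half-space (the condition $s>1/q$ guaranteeing convergence of the boundary integral), and the endpoint $q=s=1$ is the elementary $W^{1,1}$ trace result. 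Since $1/q\le1$ gives $s<1+1/q\le2$, the interval $(1,2)$ already covers all relevant non-integer $s$.

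Next I would localize and flatten. Cover $\partial\Om$ by finitely many open sets $U_i$ in each of which, after a rigid motion, $\Om\cap U_i$ lies above the graph of a Lipschitz function $\phi_i$, and fix a subordinate smooth partition of unity $\{\psi_i\}$ together with an interior cutoff. The map $\Phi_i(y',y_N)=(y',\,y_N+\phi_i(y'))$ is bi-Lipschitz and sends a piece of $\R^N_+$ onto $\Om\cap U_i$ and $\{y_N=0\}$ onto $\partial\Om\cap U_i$. Transferring the half-space statements through the $\Phi_i$ and summing against $\{\psi_i\}$ produces $\Tr$ and $\Ext$ on $\Om$; that $\Tr\circ\Ext=\mathrm{Id}$ is preserved follows because $\Tr$ is a local operator and the $\psi_i$ sum to $1$ in a neighborhood of $\partial\Om$.

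The main obstacle is the transfer through the charts $\Phi_i$ when $s>1$: composition with a merely Lipschitz change of variables preserves $W^{s,q}$ only for $0\le s\le1$, so for $s\in(1,1+1/q)$ one cannot simply pull back the $W^{s,q}(\Om)$-norm to the half-space. I would circumvent this by exploiting that the target boundary regularity is always subcritical, $s-1/q\in(0,1)$, so that the Sobolev--Slobodeckij seminorm
\[
[g]_{W^{s-1/q,q}(\partial\Om)}^q=\int_{\partial\Om}\int_{\partial\Om}\frac{|g(x)-g(y)|^q}{|x-y|^{(N-1)+(s-1/q)q}}\,d\sigma(x)\,d\sigma(y)
\]
is intrinsic and, under the bi-Lipschitz charts, comparable to its flat counterpart. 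Estimating this double integral directly against $\|u\|_{W^{s,q}(\Om)}$—rather than flattening the domain norm—yields boundedness of $\Tr$ in the full range, and the extension is handled symmetrically, the fractional smoothness of $\Ext\,g$ on $\Om$ being read off from the intrinsic seminorm of $g$. This is exactly where the upper restriction $s<1+1/q$ is forced: beyond it the boundary exponent $s-1/q$ would exceed $1$, where Lipschitz charts no longer control the norm.
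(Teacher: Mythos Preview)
The paper does not prove this theorem; it simply cites it from the literature (\cite{JK-1} for the endpoint $q=s=1$ and \cite{JW} for the remaining range). So there is no proof in the paper to compare against---your attempt is an independent effort to supply one.

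Your localization-and-flattening scheme is standard and works without difficulty when $s\le 1$. You also correctly identify the obstruction for $s\in(1,1+1/q)$: a bi-Lipschitz change of variables does not preserve $W^{s,q}$ once $s>1$, so the half-space result cannot be transported to $\Om$ by composition with the charts $\Phi_i$. The gap is in your proposed remedy. Saying that one should ``estimate the boundary double integral directly against $\|u\|_{W^{s,q}(\Om)}$'' is not an argument but a restatement of what has to be proved; you give no mechanism for controlling $[\Tr u]_{W^{s-1/q,q}(\partial\Om)}$ by the interior norm without passing through the flattened model. The same issue recurs, and more acutely, for the extension: if $\Ext g$ is built chart-by-chart from the half-space formula and pushed forward by $\Phi_i$, you obtain $W^{s,q}$ regularity only \emph{in the flattened coordinates}, and this does not transfer to $W^{s,q}(\Om)$ when $s>1$. ``Reading off the smoothness from the intrinsic seminorm of $g$'' does not repair this, because the intrinsic seminorm lives on $\partial\Om$, not on $\Om$.

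The references the paper quotes avoid the chart problem altogether. Jonsson--Wallin work in the ambient space: they regard $\partial\Om$ as an $(N{-}1)$-dimensional $d$-set in $\R^N$, characterize the trace of $W^{s,q}(\R^N)$ on such sets via Whitney-type decompositions, and build the extension by averaging over Whitney cubes---no bi-Lipschitz flattening is ever applied to a $W^{s,q}$ function with $s>1$. Combined with Stein's universal extension operator $W^{s,q}(\Om)\to W^{s,q}(\R^N)$ for Lipschitz $\Om$ (valid for all $s\ge 0$), this yields the theorem. If you want to turn your outline into a proof, this global-in-$\R^N$ approach is the missing ingredient.
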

{See \cite[Theorem 3.1]{JK-1} and the references therein for the case $q=s=1$, and for example \cite[Th1 p197]{JW} for the other cases.}

\noindent{\bf Proof of Lemma \ref{lem:extension}:} 
Let $\xi\in W^{1,\infty}(\Om,\R^N)\cap W^{s,q}(\Om,\R^N)$ (we drop the notation $\R^N$ in the following). We remark that $\xi_{|\partial\Om}\in W^{s-\frac{1}{q},q}(\partial\Om)$. Using the extension defined in Theorem \ref{th:extension}, we can define $\hat{\xi}:=\Ext(\Tr\,\xi)\in W^{s,q}(\Om)$ satisfying
\begin{equation}\label{eq:extension}
\|\hat{\xi}\|_{W^{s,q}(\Om)}\leq C \|\xi \|_{W^{s-\frac{1}{q},q}(\partial\Om)},
\end{equation}
where $C=C(s,q,\Om)$.

Therefore $\hat{\xi}-\xi \in W^{s,q}_{0}(\Om)$ and is therefore the limit  in $W^{s,q}(\Omega)$ of functions $\alpha_{n}\in C^\infty_{0}(\Om)$. In other words, $\hat{\xi}$ is the limit in $W^{s,q}(\Omega)$ of $\varphi_{n}=\xi+\alpha_{n}$. Clearly $\varphi_{n}\in W^{1,\infty}(\Om)\cap W^{s,q}(\Om)$ and 
$\Tr(\varphi_{n})=\Tr(\xi)$, so that
$$\inf\Big\{\; \|\hat{\xi}\|_{W^{s,q}(\Om)}, \;\;\hat{\xi}\in W^{1,\infty}(\Om)\cap W^{s,q}(\Om),\;\hat{\xi}=\xi \;on\;\partial\Om\;\Big\} \;\leq \|\varphi_{n}\|_{W^{s,q}(\Om)},$$
and letting $n\to\infty$ and using \eqref{eq:extension}, we obtain the first estimate \eqref{eq:inf}.
\qed

\subsection{Shape derivative estimates for an interior PDE}\label{ss:e'',int}
In this section we will prove estimates for the first and second order shape derivatives of the energy (\ref{e:E(i)}). As explained above, it will rely on estimates of $U_\Omega$.

\subsubsection{Existence, uniqueness and regularity of the solution $U_\Omega$}

\begin{proposition}\label{p:LU=f(i),Lip}
{Let $\Omega$ be as in (\ref{Omega}) and Lipschitz. Let $L$ be as in (\ref{e:LU=f(i)}) with the matrix $(a_{ij})$ satisfying (\ref{e:a(i)}),(\ref{e:Mij=Mji}).}
\begin{itemize}
\item[i)] Assume
$b_i, c\in L^\infty(\Omega)$ and $c\geq0$.
If $f\in H^{-1}(\Omega)$,  then there exists a unique solution $U_\Omega\in H^1_0(\Omega)$ of (\ref{e:LU=f(i)}).
{\item[ii)]
Assume
{$a_{ij}\in W^{1,\infty}(\Om)$, $b_i, c\in L^\infty(\Omega)$} and $c\geq0$.
Then there exists $p_1=p_{1}(\Om)$ satisfying [$p_1>4$ if $N=2$ and 
$p_1>3$ if $N\geq 3$], such that for every $p\in(p_{1}',p_1)$ (where $1/p_1+1/p_1'=1$) and every $f\in W^{-1,p}(\Omega)$ the problem
(\ref{e:LU=f(i)}) admits a unique solution $U_\Omega\in W^{1,p}_0(\Omega)$ and
\begin{eqnarray}
\|U_\Omega\|_{W^{1,p}(\Omega)}
&\leq&
C(\Omega)\|f\|_{W^{-1,p}(\Omega)}.			\label{e:|u|W1p<C|f|W-1p}
\end{eqnarray}
}
\end{itemize}
\end{proposition}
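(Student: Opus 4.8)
The plan is to use Lax--Milgram plus the Fredholm alternative for the two existence statements, and a Banach-space duality argument to reach the exponents below $2$ in ii); the only genuinely hard ingredient is the sharp $W^{1,p}$-estimate for the principal part on a Lipschitz domain.

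For i), I would introduce the bilinear form $\mathcal B(u,v)=\int_\Om\big(\sum_{ij}a_{ij}\partial_iu\,\partial_jv+\sum_ib_i\,\partial_iu\,v+c\,u\,v\big)$ of $L$ on $H^1_0(\Om)$, which is continuous by (\ref{e:a(i)}) and $b_i,c\in L^\infty(\Om)$. Using (\ref{e:a(i)}), $c\ge0$ and Young's inequality on the first-order term one gets $\mathcal B(u,u)+\sigma\|u\|_{L^2(\Om)}^2\ge\tfrac{\lambda}{2}\|\nabla u\|_{L^2(\Om)}^2$ for $\sigma=\sigma(\lambda,\|b\|_\infty)$ large enough, so $L+\sigma\colon H^1_0(\Om)\to H^{-1}(\Om)$ is an isomorphism; writing $Lu=f$ as $(I-K)u=(L+\sigma)^{-1}f$ with $K:=\sigma(L+\sigma)^{-1}\iota$ compact on $H^1_0(\Om)$ (since $\iota\colon H^1_0(\Om)\hookrightarrow H^{-1}(\Om)$ is compact by Rellich) reduces the problem to a Fredholm equation. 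By the Fredholm alternative, existence is then equivalent to uniqueness, and uniqueness follows from the weak maximum principle for $L$, valid since $c\ge0$. Thus $L\colon H^1_0(\Om)\to H^{-1}(\Om)$ is an isomorphism, and by transposition so is its form-adjoint $L^*\colon H^1_0(\Om)\to H^{-1}(\Om)$, $L^*v=-\sum_{ij}\partial_i(a_{ij}\partial_jv)-\sum_i\partial_i(b_iv)+cv$ (this will be used in ii)).

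For ii), I would first record the principal-part estimate: there is $p_1=p_1(\Om)>2$, with $p_1>4$ if $N=2$ and $p_1>3$ if $N\ge3$, such that $L_0:=-\sum_{ij}\partial_j(a_{ij}\partial_i\cdot)$ is an isomorphism from $W^{1,p}_0(\Om)$ onto $W^{-1,p}(\Om)$ for every $p\in(p_1',p_1)$; for $a_{ij}=\delta_{ij}$ this is the Jerison--Kenig estimate for the Dirichlet problem on a Lipschitz domain, and the Lipschitz-coefficient case follows by localizing, flattening the boundary and freezing the leading coefficient (the perturbation being small at small scales), possibly shrinking the admissible $p$-range but keeping the stated endpoints. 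Granting this, write $Lu=L_0u+g(u)$, $g(u)=\sum_ib_i\partial_iu+cu$; then $g$ maps $W^{1,p}_0(\Om)$ boundedly into $L^p(\Om)$, and since $L^p(\Om)\hookrightarrow\hookrightarrow W^{-1,p}(\Om)$ (dualize $W^{1,p'}_0(\Om)\hookrightarrow\hookrightarrow L^{p'}(\Om)$), the operator $u\mapsto L_0^{-1}g(u)$ is compact on $W^{1,p}_0(\Om)$. The Fredholm alternative then makes $L\colon W^{1,p}_0(\Om)\to W^{-1,p}(\Om)$ an isomorphism as soon as it is injective, in which case (\ref{e:|u|W1p<C|f|W-1p}) is the continuity of the inverse. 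For $p\in[2,p_1)$ injectivity is immediate, since a solution of $Lu=0$ in $W^{1,p}_0(\Om)$ lies in $H^1_0(\Om)$ (as $\Om$ is bounded) and vanishes by i). Applying the same reasoning to $L^*$ (same principal part $L_0$; its first-order term $v\mapsto-\partial_i(b_iv)$ again yields a compact contribution after $L_0^{-1}$), and using the injectivity of $L^*$ on $H^1_0(\Om)$ from i), one gets that $L^*\colon W^{1,p'}_0(\Om)\to W^{-1,p'}(\Om)$ is an isomorphism for every $p'\in[2,p_1)$.

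It remains to reach $p\in(p_1',2)$, i.e.\ $p'\in(2,p_1)$: here I would identify $L\colon W^{1,p}_0(\Om)\to W^{-1,p}(\Om)$ with the Banach-space adjoint of $L^*\colon W^{1,p'}_0(\Om)\to W^{-1,p'}(\Om)$, using $(W^{-1,p'}(\Om))^*=W^{1,p}_0(\Om)$, $(W^{1,p'}_0(\Om))^*=W^{-1,p}(\Om)$ and the fact that the form-adjoint of $L^*$ is $L$; hence $L$ is an isomorphism there as well, with the estimate (\ref{e:|u|W1p<C|f|W-1p}). Taking the union of the two ranges gives the whole interval $(p_1',p_1)$, and uniqueness in $W^{1,p}_0(\Om)$ for $p<2$ is part of the isomorphism statement. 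The main obstacle is the principal-part estimate: pinning down the sharp exponent range on an arbitrary Lipschitz domain, and transferring it to variable Lipschitz coefficients, is the only step requiring genuine elliptic PDE analysis (Jerison--Kenig-type results) rather than the soft Lax--Milgram/Fredholm/duality machinery used for everything else.
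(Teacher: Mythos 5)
Your overall architecture is sound, and for everything except the principal part it is essentially the paper's own proof: point i) is the standard Lax--Milgram/Fredholm/maximum-principle argument cited from \cite{GT}; your decomposition $L=L_0+g$ with $u\mapsto L_0^{-1}g(u)$ compact on $W^{1,p}_0(\Omega)$ and the Fredholm alternative is exactly Remark \ref{r:JLW->L} (there written as $(I+K)u=g$ with $K=A^{-1}B$); and your way of reaching $p\in(p_1',2)$ by identifying $L\colon W^{1,p}_0\to W^{-1,p}$ with the Banach adjoint of $L^*\colon W^{1,p'}_0\to W^{-1,p'}$ is a clean variant of the paper's duality step, which instead proves $\mathrm{Ker}(L)=\{0\}$ for $p<2$ via surjectivity of $L^*$ at the dual exponent. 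Your remark that injectivity of $L^*$ on $H^1_0$ should come by transposition rather than from a maximum principle is also correct, since the sign condition need not hold for $L^*$.

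The genuine gap is the principal-part estimate itself: that $L_0=-\partial_j(a_{ij}\partial_i\,\cdot)$ with Lipschitz (or VMO) coefficients is an isomorphism from $W^{1,p}_0(\Omega)$ onto $W^{-1,p}(\Omega)$ for the sharp range $p\in(p_1',p_1)$ with $p_1>4$ if $N=2$ and $p_1>3$ if $N\geq3$. You assert this ``by localizing, flattening the boundary and freezing the leading coefficient'', but taken literally this fails: flattening a Lipschitz boundary is only a bi-Lipschitz change of variables, which converts the coefficients (even constant ones) into merely bounded measurable ones, so the frozen-coefficient perturbation is no longer small at small scales and the smallness/Neumann-series step collapses. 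Even in the corrected version where one freezes without flattening (so that the Jerison--Kenig theorem \cite{JK-1} applies to the constant-coefficient operator on the Lipschitz subdomain), upgrading local smallness to invertibility on the full sharp range, uniformly up to the Jerison--Kenig endpoints, is precisely the nontrivial real-variable analysis that the paper does not redo but imports: it invokes Shen's Riesz-transform bounds \cite{shen} (Theorem C) and deduces the isomorphism from the factorization $\nabla L_0^{-1}\mathrm{div}=(\nabla L_0^{-1/2})(L_0^{-1/2}\mathrm{div})$ as in Remark \ref{r:shen->W1p}, with \cite{JLW} and the Mitrea--Taylor route of Remark \ref{r:ii)+mitrea} as alternatives. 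So your proposal is complete modulo this step, but this step is the actual content of ii) beyond soft functional analysis, and your sketch of it must be replaced by, or reduced to, one of these results.
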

\begin{proof}
The point i) is standard, { see \cite{GT}}. The sharp regularity result in ii)  has been first proved for $L=-\Delta$ in \cite[Thm. 0.5]{JK-1}. 
The complete proof of ii) is based on \cite[Theorem C]{shen} and Remark \ref{r:shen->W1p} for $Lu=-\sum_{i,j}\partial_i(a_{ij}\partial_ju)$,
and Remark \ref{r:JLW->L} for $L$ of general form. {See also Remark \ref{r:ii)+mitrea} for a different proof with a slightly stronger regularity on $a_{ij}$. Note that, according to \cite{shen}, the same result is
valid with quite weaker regularity on $a_{ij}$, like asking that they be in
$\mbox{VMO}(\R^N)$.}
\end{proof}

\begin{remark}{\bf Notation:} \label{r:p1*,L*}
{If $L$ satisfies the assumptions of ii) above, then so does its adjoint $L^*$ as we can easily see by writing 
$(b_iu)_{x_i}=(b_i)_{x_i}u+b_iu_{x_i}$, where $b_i\in W^{1,\infty}(\Omega)$. } Let $p_1$, $p_1'$ (resp. $p_1^*$, $p_1'^*$) be the numbers 
associated to the equation $LU=f$, $U\in W^{1,p}_0(\Omega)$ (resp. $L^*U=f$, $U\in W^{1,p}_0(\Omega)$) as given by ii), Proposition \ref{p:LU=f(i),Lip}. 
Then we set $$\overline{p}_1:=\min\{p_1,p_1^*\},\;\;\overline{p}'_1:=\max\{p'_1,{p'}_1^*\}.$$
{Note that, if $\Omega$ is Lipschitz, then $\overline{p}_1>4$ if $N=2$ and $\overline{p}_1>3$ if $N\geq 3$.}
\end{remark}

\begin{proposition}\label{p:LU=f(i),conv}
{Let $\Omega$ be as in (\ref{Omega}) and {semi}-convex. Let $L$ be as in (\ref{e:LU=f(i)}) with the matrix $(a_{ij})$ satisfying (\ref{e:a(i)}),(\ref{e:Mij=Mji}).}
\begin{itemize}
\item[i)]
Assume $a_{ij}\in W^{1,\infty}(\Omega), b_i, c\in L^\infty(\Omega)$ and $c\geq0$.
Then for every $p\in (1,\infty)$ and for all $f\in W^{-1,p}(\Omega)$, the problem (\ref{e:LU=f(i)}) admits a unique solution $U_\Omega\in W^{1,p}_0(\Omega)$ and (\ref{e:|u|W1p<C|f|W-1p}) holds.
\item[ii)]
Assume $a_{ij}\in W^{1,\infty}(\Omega)$, $b_i, c\in L^\infty(\Omega)$ and $c\geq 0$.
Then for every $f\in L^\infty(\Omega)$ there exists a unique solution 
$U_\Omega\in H^1_0(\Omega)\cap W^{1,\infty}(\Omega)$~of~(\ref{e:LU=f(i)}).
\end{itemize}
\end{proposition}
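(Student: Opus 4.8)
The plan is to derive both points from known elliptic regularity results adapted to the semi-convex setting, exactly as foreshadowed in Remark~\ref{r:dR->qc}. For point i), the strategy is to invoke the $W^{1,p}$-regularity theory of \cite{JLW} for the divergence-form operator with lower-order terms. First I would observe, as recorded in Remark~\ref{r:dR->qc}, that a semi-convex $\Omega$ is, for every $p\in(1,\infty)$, a $(\delta(N,p),\sigma,R)$-quasi-convex domain for a suitable choice of $(\sigma,R)$ depending on $p$, and that a Lipschitz matrix $(a_{ij})$ is $(\delta(N,p),R)$-vanishing for $R$ small. With these two facts in hand, Theorem~1.1 of \cite{JLW} applies directly to the principal part $-\sum_{i,j}\partial_j(a_{ij}\partial_i U)$ and yields, for data in $W^{-1,p}$, a unique solution in $W^{1,p}_0(\Omega)$ together with the estimate \eqref{e:|u|W1p<C|f|W-1p}. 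The lower-order terms $\sum_i b_i\partial_i U + cU$ with $b_i,c\in L^\infty(\Omega)$ and $c\geq0$ are then handled by a standard perturbation/continuity argument: one writes the full operator as principal part plus a compact (relatively lower-order) perturbation, uses the a~priori estimate on bounded domains together with the energy estimate from Proposition~\ref{p:LU=f(i),Lip}~i) (which gives injectivity for $p=2$, hence for $p$ near $2$ by the $W^{1,p}$ bound, and then for all $p$ by a bootstrap on the exponent via the openness of the set of good $p$), and concludes by the Fredholm alternative that the map $W^{1,p}_0(\Omega)\to W^{-1,p}(\Omega)$ is an isomorphism for every $p\in(1,\infty)$.

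For point ii), the plan is to combine the $W^{1,p}$-regularity of point i) for a large finite $p$ with a Sobolev embedding, upgraded to $W^{1,\infty}$ by a De~Giorgi--Nash--Moser / Lipschitz-regularity argument specific to convex or semi-convex domains. Concretely: if $f\in L^\infty(\Omega)$ then $f\in W^{-1,p}(\Omega)$ for all $p<\infty$ (on a bounded domain), so point i) already gives $U_\Omega\in W^{1,p}_0(\Omega)$ for every $p<\infty$, hence $U_\Omega\in C^{0,\alpha}(\overline\Omega)$ for every $\alpha<1$. To get the full Lipschitz bound one uses that the exterior ball condition provides a uniform lower barrier at every boundary point, which, together with interior gradient estimates for the (now H\"older-continuous) solution of a uniformly elliptic equation with $W^{1,\infty}$ principal coefficients and bounded right-hand side, yields $\nabla U_\Omega\in L^\infty(\Omega)$; this is precisely the classical global Lipschitz regularity for the Dirichlet problem in semi-convex domains (the exterior ball playing the role that convexity plays in the purely convex case). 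Uniqueness in $H^1_0(\Omega)\cap W^{1,\infty}(\Omega)$ is immediate from the uniqueness already established in $H^1_0(\Omega)$.

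The main obstacle I anticipate is the clean deduction of the global $W^{1,\infty}$ bound in point ii): interior gradient estimates are routine, but the behaviour up to the nonsmooth boundary requires genuinely the uniform exterior ball condition, and one must be careful that the barrier construction is uniform in the boundary point and that the $W^{1,\infty}$-regularity of the coefficients $a_{ij}$ (rather than mere continuity) is what makes the gradient bound survive up to the boundary --- on a general Lipschitz domain such a bound would be false, so the semi-convexity hypothesis is used in an essential way here. The verification in point i) that the set of admissible exponents is all of $(1,\infty)$, and not just a neighbourhood of $2$, also requires a small argument (either iterating \cite{JLW} for each $p$ separately, which is what the quantified statement of Remark~\ref{r:dR->qc} is designed to allow, or a self-improvement argument), but this is bookkeeping rather than a conceptual difficulty.
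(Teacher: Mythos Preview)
Your proposal is correct and follows essentially the same route as the paper. For i), the paper invokes \cite{JLW} via Remark~\ref{r:dR->qc} for the principal part and then the Fredholm argument of Remark~\ref{r:JLW->L} for the lower-order terms, which is exactly your ``compact perturbation + Fredholm alternative'' outline; for ii), the paper uses i) to get $U_\Omega\in C^0(\overline\Omega)$, bootstraps interior regularity to $C^2(\Omega)$ via \cite[Thms.~8.8, 9.13, Lemma~9.16]{GT}, and then obtains the global Lipschitz bound from \cite[Thm.~15.9]{GT}, which is precisely the exterior-ball barrier argument you describe.
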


\begin{proof}
The result in i) was proven for $L=-\Delta$, see \cite[Corollary 1 and Remark]{fromm-1}. {For general $L$, we refer to \cite{JLW} as explained in  Remark \ref{r:dR->qc} and we use Remark \ref{r:JLW->L} below to include the first order terms.}
For ii), as $f\in L^\infty(\Omega)$, from i) it follows that $U_\Omega\in W^{1,p}(\Omega)$
for all $p\in(1,\infty)$, which implies $U_\Omega\in C^0(\overline{\Om})$.
Furthermore, applying \cite[Thm. 8.8]{GT}, it follows $u\in W^{2,2}_{loc}(\Omega)$. Applying \cite[Theorem 9.13]{GT} (locally in $\Omega$, with $T=\emptyset$) and then \cite[Lemma 9.16]{GT}, we get $u\in W^{2,q}_{loc}(\Omega)$, for all $q>2$.
From Sobolev embeddings it follows $u\in C^0(\overline{\Omega})\cap C^2(\Omega)$. Then ii) follows from \cite[Thm. 15.9]{GT}.
\end{proof}

\begin{remark}\label{r:shen->W1p}
Theorem C in \cite{shen} states that if $a_{ij}\in \mbox{VMO}(\mathbb R^N)$ {(and in particular if $a_{ij}\in W^{1,\infty}(\Om)$)}, and $L=-\partial_i(a_{ij}(x)\partial_j)$, then
$T:=\nabla (L)^{-1/2}$ is continuous from $L^p(\Omega)$ to $(L^p(\Omega))^N$, for $p\in(1,p_1)$, with 
$p_1=p_1(N)$, $p_1(2)>4$, $p_1(N)>3$ for $N\geq 3$.
Then it follows that $L$ is continuous invertible from $W^{1,p}_0(\Omega)$ onto $W^{-1,p}(\Omega)$ for 
$p\in(p_1',p_1)$. Indeed, first note that this is equivalent to show that $\nabla (L)^{-1}\mbox{\rm div}$ is continuous from
$(L^p(\Omega))^N$ to $L^p(\Omega)$. Next, we note that from \cite[Theorem C]{shen}, $T^*=(L)^{-1/2}\mbox{\rm div}$ is continuous from 
$(L^q(\Omega))^N$ to $L^q(\Omega)$,
for $q\in(p_1',\infty)$. Then the desired continuity for $\nabla(L)^{-1}\mbox{div}$ follows from the fact that 
$\nabla(L)^{-1}\mbox{div}=TT^*$.
\end{remark}

{\begin{remark}\label{r:JLW->L}
To complete the proof of ii) in Proposition \ref{p:LU=f(i),Lip} and of i) in Proposition \ref{p:LU=f(i),conv} , let us check that if the map
$u\to Au:=-\partial_i(a_{ij}(x)\partial_ju)$ defines an isomorphism from $W^{1,p}_0(\Omega)$ into $W^{-1,p}(\Omega)$, then so does $L$.
Indeed, let $B=L-A$, {$Bu=b_i\partial_iu+ cu$}, and consider the equation 
\begin{equation}\label{e:LinW1p}
Lu=f,\quad u\in W^{1,p}_0(\Omega),\quad f\in W^{-1,p}(\Omega),
\end{equation}
which is equivalent to $Au+Bu=f$. Multiplying this equation by $A^{-1}$, we get the equivalent equation
\begin{equation}\label{e:LinLp}
(I+K)u = g,\quad
g=A^{-1}f\in W^{1,p}_0(\Omega),\quad
u\in W_0^{1,p}(\Omega),
\end{equation}
where $K=A^{-1}B: W_0^{1,p}(\Omega) \mapsto W^{1,p}_0(\Omega)$ is compact since $B$ is compact from $W_0^{1,p}$ into $W^{-1,p}$ and $A^{-1}$ is continuous from $W^{-1,p}$ into $W_0^{1,p}$.

Then $I+K$ satisfies the Fredholm alternative (see e.g. \cite[Thm. 6.6]{brezis}). Furthermore, $Ker(I+K)=Ker(L)=\{0\}$. Indeed, assume $u\in W_0^{1,p}(\Omega)$, $Lu=0$. If $p\geq 2$, we can directly use the uniqueness result in \cite[Thm. 8.1 and Cor. 8.2]{GT} to deduce $u=0$.  If $p\in (1,2)$, we obtain the same conclusion by using that $L^*: W_0^{1,p'}\mapsto W^{-1,p'}$ is onto which follows from \cite[Thm.8.3 and 8.6]{GT} (see also the remark following Corollary 8.7 in \cite{GT}).

Then, Fredholm alternative \cite[Thm. 6.6]{brezis} implies that $I+K$ defines an isomorphism from $W_0^{1,p}(\Omega)$ onto itself.
\hfill$\Box$
\end{remark}
}

\begin{remark}\label{r:ii)+mitrea}
{Different approaches may be quoted for the last point of Proposition \ref{p:LU=f(i),Lip}.
They only require a slightly stronger regularity hypothesis (like
$a_{ij}\in C^{1+\gamma}(\Omega), \gamma\in (0,1)$), but they are also quite
interesting. Let us just recall the idea:}
we need to focus only in the case $L=-\partial_i(a_{ij}(x)\partial_i)$, because the case of general $L$ follows from Remark \ref{r:JLW->L}.
The case $N=2$ follows from \cite{MM}.
For the case $N\geq3$ we consider the manifold $(\Omega,g)$, where $g$ is the metric given by $g=\mbox{det}(a_{ij})^{1/(N-2)}A^{-1}$. Then 
(from \cite[p. 186]{MT1} and the formula of Laplace-Beltrami operator, see \cite[(1.1)]{MM} for instance), the equation $Lu=f$, $u\in W^{1,p}_0(\Omega)$ with $f\in W^{-1,p}(\Omega)$,
becomes $-\Delta_g u = h$, with $h=\mbox{det}(a_{ij})^{1/(2-N)}f\in W^{-1,p}(\Omega)$. Note that $g\in C^{1,1}(\Omega)$ and 
$W^{-1,p}(\Omega)$ norms of $f$ and $h$ are equivalent. Then the claim ii) follows from \cite[Corollary 13.2]{MT}.\\
\end{remark}

\subsubsection{Estimates of the shape derivatives of the solution}
The results of this section give estimates for $\hat{U}'_0$ and $\hat{U}''_0$. 
Note that while for $\hat{U}'_0$ we prove an $H^1(\Omega_0)$-estimate, we also prove more involved estimates for 
$\hat{U}'_0$ and $\hat{U}''_0$. These estimates are  motivated by the functional $K$ involved in the energy (\ref{e:E(i)}), see also Theorem \ref{th:|e'|+|e''|(i)}.
\begin{theorem}\label{th:hU',hU'',H12(i)}
{Let $\Omega_0$ be as in (\ref{Omega}) and let $L$ be as in (\ref{e:LU=f(i)}) with the matrix $(a_{ij})$ satisfying (\ref{e:a(i)}),(\ref{e:Mij=Mji}).} 
Let $\theta\in\Theta$, $\Omega_\theta=(I+\theta)(\Omega_0)$,
$U_\theta\in H^1_0(\Omega_\theta)$ solution of (\ref{e:LU=f(i)}) (see  i) in Proposition \ref{p:LU=f(i),Lip}),
$\hat{U}_\theta=U_\theta\circ(I+\theta)$.
\begin{itemize}
\item[i)] If $a_{ij}, b_i, c, f\in W^{m,\infty}({B_{R}})$, $m\in\mathbb N^*$, then $\hat{U}_\theta\in C^m(\mathcal{V},H^1_0(\Omega_0))$ where $\mathcal{V}$ is a neighborhood of $\theta=0$ in $\Theta$.
\end{itemize}
{\it In the following, we assume the previous hypotheses are satisfied {for $m=2$. }}
\begin{itemize}
\item[ii)]
If moreover $\Omega_0$ is Lipschitz 
{
and $\alpha_{ij}\in W^{1,\infty}(\Omega)$,
$\beta_i, \gamma,\alpha_{00}\in L^\infty(\Omega)$
}
then for all $p\in(2,\ol{p}_1)$ we have
\begin{eqnarray}
\hspace*{-5mm}
\|\hat{U}'_0\|_{H^1(\Omega_0)}
&\leq&
 C _1
  \|\xi\|_{W^{1,2p/(p-2)}(\Omega_0)},						\label{e:|hU'|H1<(i)}\\
\hspace*{-5mm}
\Big|
 \int_{\Omega_0}
\partial_U K(\cdot,U_0,\nabla U_0) \hat{U}'_0
+
\partial_q K(\cdot,U_0,\nabla U_0)\cdot  \nabla\hat{U}'_0
 \Big|		
&\leq&
 C_1
 \|\xi\|_{W^{1,p/(p-2)}(\Omega_0)},							\label{e:|hU'phi|<(i)}\\
\hspace*{-5mm}
\Big|
 \int_{\Omega_0}
\partial_U K(\cdot,U_0,\nabla U_0) \hat{U}''_0
 +
 \partial_q K(\cdot,U_0,\nabla U_0)\cdot  \nabla\hat{U}''_0
 \Big|
&\leq&
 C_2
 \|\xi\|^2_{W^{1,2p/(p-2)}(\Omega_0)},		\label{e:|hU''phi|<(i)}
\end{eqnarray}
where
$\ol{p}_{1}$ is defined in Remark  \ref{r:p1*,L*} ($\overline{p}_1>4$ if $N=2$, $\overline{p}_1>3$ if $N\geq 3$), 
$\hat{U}'_0=\hat{U}_\theta'(0)(\xi)$, 
$\hat{U}''_0=\hat{U}_\theta''(0)(\xi,\xi)$,
$C_i= C_i(\Omega_0,L,f,K,p)$.
\end{itemize}
\end{theorem}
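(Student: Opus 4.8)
\textbf{Proof plan for Theorem \ref{th:hU',hU'',H12(i)}.}

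The plan is to follow the "material derivative" strategy outlined in Section \ref{ss:method}, carrying it carefully for the general operator $L$ and the general energy integrand $K$. Point i) is the standard implicit function theorem argument: transporting the state equation (\ref{e:LU=f(i)}) to the fixed domain $\Omega_0$ gives the weak formulation $\int_{\Omega_0}\widehat M_\theta\nabla\widehat U_\theta\cdot\nabla\varphi + (\text{lower order terms depending on }\theta)=\int_{\Omega_0}\widehat f_\theta\varphi$ for all $\varphi\in H^1_0(\Omega_0)$, where the transported coefficients depend on $\theta$ in a $C^m$ fashion into $L^\infty(\Omega_0)$ (as recalled after (\ref{Mtheta}), using \cite{HP,simon-1} and the $W^{m,\infty}$-regularity of the data). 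Since the bilinear form associated to $\widehat L_0 = L$ is an isomorphism $H^1_0(\Omega_0)\to H^{-1}(\Omega_0)$ by i) of Proposition \ref{p:LU=f(i),Lip}, the implicit function theorem yields $\theta\mapsto\widehat U_\theta\in C^m(\mathcal V,H^1_0(\Omega_0))$.

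For ii), I first differentiate the transported weak formulation once at $\theta=0$ in the direction $\xi$, as in (\ref{e:hL'})--(\ref{e:hL'0}) but now keeping the first-order terms $b_i\partial_i$ and $c$. This gives the equation satisfied by $\widehat U'_0\in H^1_0(\Omega_0)$: it solves $L\widehat U'_0 = F_1$ in $\Omega_0$, where $F_1$ is an explicit expression of the form $\widehat f'_0 - (\widehat L'_0 + \text{lower-order derivative terms})U_0$, hence $F_1 = \nabla\cdot(G_1) + g_1$ with $G_1, g_1$ controlled pointwise by products of $(|U_0|+|\nabla U_0|)$ and $(|\xi|+|\nabla\xi|)$, together with the data and their first derivatives. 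Using $U_0\in W^{1,p}(\Omega_0)$ (Proposition \ref{p:LU=f(i),Lip} ii)) and Hölder, $F_1\in W^{-1,2}(\Omega_0)$ with $\|F_1\|_{H^{-1}}\le C\|\xi\|_{W^{1,2p/(p-2)}(\Omega_0)}$; the $H^1$-estimate (\ref{e:|hU'|H1<(i)}) then follows from the isomorphism property of $L$. For (\ref{e:|hU'phi|<(i)}), rather than estimating $\widehat U'_0$ in a strong norm, I test the $\widehat U'_0$-equation against a well-chosen $\varphi$: the integrand on the left of (\ref{e:|hU'phi|<(i)}) is $L^*\Psi_0$ tested against $\widehat U'_0$ for a suitable $\Psi_0$, so by symmetry one reduces to $\int \nabla\widehat U'_0\cdot(\cdots) = \langle F_1,\Psi_0\rangle$; here one uses that $L^*$ also enjoys the $W^{1,p}$-regularity (Remark \ref{r:p1*,L*}), so that the auxiliary state $\Psi_0\in W^{1,p}(\Omega_0)$, and the pairing $\langle F_1,\Psi_0\rangle$ is estimated by $\|G_1\|_{L^{p'}}\|\nabla\Psi_0\|_{L^p} + \|g_1\|\,\|\Psi_0\|$, which gives the $\|\xi\|_{W^{1,p/(p-2)}}$ bound because only one factor of $(|\xi|+|\nabla\xi|)$ appears and it can be paired with the product of two $W^{1,p}$-functions via Hölder with exponents $p,p,p/(p-2)$.

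For (\ref{e:|hU''phi|<(i)}) I differentiate the transported weak formulation twice at $\theta=0$ in the direction $\xi$. This produces $L\widehat U''_0 = F_2$ in $\Omega_0$ with $F_2$ a sum of: second-derivative-of-coefficient terms applied to $U_0$, mixed terms of the form $(\widehat L'_0 + \cdots)\widehat U'_0$, and $\widehat f''_0$. The new difficulty compared with the first-order case is the presence of the product $(\text{first }\theta\text{-derivative of coefficients})\cdot\nabla\widehat U'_0$: this forces me to use the $H^1$-estimate (\ref{e:|hU'|H1<(i)}) on $\widehat U'_0$, which is why the right-hand side carries $\|\xi\|^2_{W^{1,2p/(p-2)}}$ rather than $\|\xi\|^2_{W^{1,p/(p-2)}}$ — one factor $\|\xi\|_{W^{1,2p/(p-2)}}$ comes from (\ref{e:|hU'|H1<(i)}) and the second from the explicit $\xi$-dependence of the transported coefficients, the two being paired with $\nabla\widehat U'_0\in L^2$ and $\nabla\Psi_0\in L^p$ respectively (note $1/2+1/p + (p-2)/(2p)\cdot 2\cdot\frac12$ balances). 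As in the first-order case, I do not estimate $\widehat U''_0$ in a strong norm but test its equation against the auxiliary state $\Psi_0$ associated with $K$, reducing (\ref{e:|hU''phi|<(i)}) to $\langle F_2,\Psi_0\rangle$ and using $\Psi_0\in W^{1,p}(\Omega_0)$. The main obstacle is precisely the bookkeeping of the twice-differentiated transported operator and the verification that every term in $F_2$ splits as a divergence of an $L^{p'}$-field plus an $L^{(p')^-}$-function whose norm is controlled by $\|\xi\|^2_{W^{1,2p/(p-2)}(\Omega_0)}$ times $\|U_0\|_{W^{1,p}}$-type quantities, with the Hölder exponents matching up correctly; once this is done, pairing against $W^{1,p}$-functions via (\ref{e:|u|W1p<C|f|W-1p}) applied to $L^*$ closes the estimate. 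Everything is at the level of the fixed domain $\Omega_0$, so no boundary integration by parts is ever needed, which is the whole point of the method.
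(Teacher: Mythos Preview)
Your plan is correct and coincides with the paper's proof. The paper likewise uses the implicit function theorem on the transported weak formulation for i), differentiates it once and twice for ii), and introduces exactly the auxiliary state you call $\Psi_0$ (solving $L^*\Psi_0=\partial_U K(\cdot,U_0,\nabla U_0)-\nabla\cdot\partial_q K(\cdot,U_0,\nabla U_0)$, whence $\Psi_0\in W^{1,p}_0(\Omega_0)$ by the $W^{1,p}$-isomorphism for $L^*$) to convert (\ref{e:|hU'phi|<(i)}) and (\ref{e:|hU''phi|<(i)}) into the pairings $\langle F_1,\Psi_0\rangle$ and $\langle F_2,\Psi_0\rangle$; the H\"older bookkeeping with exponents $p,p,p/(p-2)$ (resp.\ $2,p,2p/(p-2)$ for the $\nabla\widehat U'_0$-term) is the same. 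The only cosmetic difference is that for (\ref{e:|hU'|H1<(i)}) the paper also phrases the $H^1$-bound via a test function (solving $L^*\varphi=\widehat U'_0-\Delta\widehat U'_0$) rather than invoking the $H^1_0\to H^{-1}$ isomorphism of $L$ directly as you do, but the two are equivalent.
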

\begin{proof}\\
{\it Step 1. } 
The proof of i) is classical. It is based on the implicit function theorem.

After changing the variable $x=(I+\theta)(y)$, the weak form of (\ref{e:LU=f(i)}) in $\Omega_\theta$ is transformed in $\Omega_0$ as follows
\begin{eqnarray}
 \langle
\hat{L}_\theta\hat{U}_\theta,\varphi\rangle_{H^{-1}(\Om_{0})\times H^1_{0}(\Om_{0})}
&=&
\int_{\Omega_0}
\varphi \hat{f}_\theta,\quad\forall \varphi\in H^1_0(\Omega_0),	\label{e:hLhU=hf,w(i)}
\end{eqnarray}
where  $\hat{L}_\theta:H^1_0(\Omega_0)\mapsto H^{-1}(\Omega_0)$ is defined by
\begin{eqnarray}
\langle
\hat{L}_\theta\hat{U},\varphi\rangle
&=&
\langle
\partial_l(\hat{a}_{kl}\partial_k\hat{U})
 +
\hat{b}_k\partial_k\hat{U} 
 +
 \hat{c}\hat{U}
,
\varphi\rangle					\nonumber\\
&=&
\int_{\Omega_0}
 \hat{a}_{kl}\partial_k\hat{U}\partial_l{\varphi} 
 +
 (\hat{b}_k\partial_k\hat{U} 
 +
 \hat{c}\hat{U}
)\varphi,	\;\;\;\textrm{ for }(\hat{U},\varphi)\in H^1_{0}(\Om_{0})^2,												\label{e:hLhU(i)}
\end{eqnarray}
and
\[
\begin{array}{rclccrcl}
\hat{a}_{kl}&=&\sum_{i,j}a_{ij}\circ(I+\theta){T}_{ki}{T}_{lj}{\rm det}[I+\nabla\theta],
&&
\hat{b}_k&=&\sum_ib_i\circ(I+\theta){T}_{ki}{\rm det}[I+\nabla\theta],\\[1mm]
\hat{c}&=&c\circ(I+\theta){\rm det}[I+\nabla\theta],
&&
\hat{f}&=&f\circ(I+\theta){\rm det}[I+\nabla\theta],\\[1mm]
T&=&(T_{kl})=[Id+\nabla\theta]^{-1}.
\end{array}
\]
Consider $F$ defined by
\[
 \begin{array}{lclll}
 F:&\Theta\times H^1_0(\Omega_0)&\longrightarrow&H^{-1}(\Omega)\\
    &(\theta,\hat{U})&\longmapsto&
\left[\varphi\mapsto  \langle
 \hat{L}_\theta\hat{U}-\hat{f}, 
 \varphi\rangle\right].
\end{array}
\]
It is easy to check that $F$ is well defined and of class $C^m$ in a neighborhood of $(0,U_0)\in\Theta\times H^1_0(\Omega_0)$, and 
\[
 \partial_{{U}}F(0,U_0)(U):\varphi\mapsto
 \langle
 LU,\varphi\rangle
 =
 \int_{\Omega_0}
 a_{ij}\partial_i U\partial_j \varphi 
 +
 (b_i\partial_i U + c U)\varphi,
\quad\forall U, \varphi\in H^1_0(\Omega_0).
\]
Note that from Proposition \ref{p:LU=f(i),Lip}, $\partial_{{U}}F(0,U_0)$ defines an isomorphism from $H^1_0(\Omega_0)$ to $H^{-1}(\Omega_0)$. Then, from implicit function Theorem there exists a $C^m$ map, $\hat{U}:\theta\mapsto\hat{U}(\theta)$, such that $F(\theta,\hat{U}(\theta)) =0$ for $\|\theta\|_\Theta$ small.
From the uniqueness of solution to (\ref{e:LU=f(i)}) we obtain $\hat{U}_\theta=\hat{U}(\theta)$, which proves the regularity of $\theta\mapsto\hat{U}_\theta$.

\noindent
{\it Step 2. }
We prove the estimates (\ref{e:|hU'|H1<(i)})-(\ref{e:|hU''phi|<(i)}) by differentiating (\ref{e:hLhU=hf,w(i)}) with respect to $\theta$. The differentiation is allowed because $\hat{U}_\theta$ is  differentiable and $a_{ij}, b_i, c, f$ are regular enough.

Differentiating (\ref{e:hLhU=hf,w(i)}) once with respect to $\theta$ gives
\begin{eqnarray}
\langle
 \hat{L}_\theta\hat{U}'_\theta,\varphi\rangle
&=&
\langle
\hat{f}'_\theta
-
\hat{L}'_\theta\hat{U}_\theta,\varphi\rangle.								\label{e:(LU-f)'=0(i)-1}
\end{eqnarray}
We take $\theta=0$ in   (\ref{e:(LU-f)'=0(i)-1}). {Note that $\hat{L}_0=L$}. By isolating all the terms with $\hat{U}'_0$ we obtain
\begin{eqnarray}
\langle
 L\hat{U}'_0,{\varphi}\rangle
&=&
\langle
 L^*{\varphi},\hat{U}'_0\rangle						\nonumber\\   
&=&
\int_{\Omega_0}
 \hat{f}'_0{\varphi} 
-
\left(
\hat{a}_{kl}'\partial_k U_0\partial_l {\varphi}
+
(\hat{b}_k'\partial_k U_0
+
\hat{c}'_0 U_0)\varphi
\right).						\label{e:hU'L*}
\end{eqnarray}
We now choose suitable test functions $\varphi$ to prove (\ref{e:|hU'|H1<(i)}) and (\ref{e:|hU'phi|<(i)}).
Let ${\varphi}\in H^1_0(\Omega_0)$ be the solution of
\begin{equation}\label{e:phi1,U'(i)}
L^*{\varphi}=\hat{U}'_0-\Delta\hat{U}'_0=:g^*. 
\end{equation}
Note that $g^*\in H^{-1}(\Omega_0)$ and then the solution
${\varphi}$ is uniquely defined in $H^1_0(\Omega_0)$ and satisfies (see \cite{LU-1})
\begin{equation}\label{e:|phi1|H1<(i)}
\|{\varphi}\|_{H^1(\Omega_0)}
\leq 
 C\|g^*\|_{H^{-1}(\Omega_0)}
 \leq
 C\|\hat{U}'_0\|_{H^1(\Omega_0)},\quad C=C(\Omega_0,L).
\end{equation}
Then from (\ref{e:hU'L*}) it follows
\begin{eqnarray}
\|\hat{U}'_0\|_{H^1(\Omega_0)}^2
&\leq&
C_1
\int_{\Omega_0}
(1+|U_0|+|\nabla U_0|)
(|\xi|+|\nabla\xi|)
(|{\varphi}|+|\nabla{\varphi}|)			  \nonumber	\\
&\leq&
C_1
(1+  \|U_0\|_{W^{1,p}(\Omega_0)})\|\xi\|_{W^{1,2p/(p-2)}(\Omega_0)}\|\varphi\|_{H^1(\Omega_0)},
									\label{e:|hU'|H1^2<}
\end{eqnarray}
where we applied H\"older inequality with three terms, and $C_1=C_1(\Omega_0,L,f)$, $p\in(2,p_1)$, 
{$p_1=p_1(\Om_{0})$ as given by Proposition \ref{p:LU=f(i),Lip}
for $L$}. Together with (\ref{e:|phi1|H1<(i)}) this proves (\ref{e:|hU'|H1<(i)}).

Now we consider ${\varphi}\in H^1_0(\Omega_0)$ the solution of
\begin{eqnarray}
L^*{\varphi}
&=&
\partial_U K(\cdot,U_0,\nabla U_0) - \nabla\cdot \partial_q K(\cdot,U_0,\nabla U_0)		\nonumber	\\
&=&
(\alpha_{00}U_0 + \gamma)
-
\nabla\cdot
\left(
\alpha\nabla U_0
+
\beta
\right)=:
g^*.							\label{e:phi2,U'(i)}
\end{eqnarray}
Let $q\in (2,\overline{p}_1)$. 
Recall that $U_0\in W^{1,q}(\Omega_0)$ by Proposition \ref{p:LU=f(i),Lip}. From \eqref{e:K(i)}, we deduce
\[
\partial_U K(\cdot,U_0,\nabla U_0), \partial_q K(\cdot,U_0,\nabla U_0)\in L^{{q}}(\Omega_0).
\]
It follows that $g^*\in W^{-1,q}(\Omega_0)$. By Proposition \ref{p:LU=f(i),Lip} applied to $L^*$, the solution
${\varphi}$ of (\ref {e:phi2,U'(i)}) is uniquely defined in $W_0^{1,q}(\Omega_0)$ and satisfies
\begin{equation}\label{e:|phi2|H1<(i)}
\|{\varphi}\|_{W^{1,q}(\Omega_0)}
\leq 
 C\|g^*\|_{W^{-1,q}(\Omega_0)}
 =
 C,\quad C=C(\Omega_0,L,K,U_0,q).
\end{equation}
Then from (\ref{e:hU'L*}) it follows
\begin{eqnarray}
\Big|
 \int_{\Omega_0}
\partial_U K(\cdot,U_0,\nabla U_0) \hat{U}'_0
&+&
\partial_q K(U_0,\nabla U_0)\cdot  \nabla\hat{U}'_0
 \Big|		\nonumber\\
&=&
\Big|
\langle
L^*\varphi,\hat{U}'_0\rangle\Big|
=
\langle
L\hat{U}',\varphi
\rangle\Big|
\nonumber\\
 &\leq&
C_1
\int_{\Omega_0}
(1+|U_0|+|\nabla U_0|)
(|\xi|+|\nabla\xi|)
(|{\varphi}|+|\nabla{\varphi}|)			  \nonumber	\\
&\leq&
C_1
(1+\|U_0\|_{W^{1,p}(\Omega_0})\|\varphi\|_{W^{1,q}(\Omega_0)}\|\xi\|_{W^{1,1/(1-1/p-1/q)}(\Omega_0)}),
\label{e:|hU'phi|<(i)-2}
\end{eqnarray}
with $C_1=C_1(\Omega_0,L,K,f,q)$. Here we chose $q:=p$ where $p$ is given in ii) of Theorem \ref{th:hU',hU'',H12(i)} and the estimate (\ref{e:|hU'phi|<(i)}) follows. (Note that the use in considering $q\neq p$ in the previous computations will appear later in the case of { {semi}-convex domains}).

\noindent
{\it Step 3. }
Differentiating (\ref{e:(LU-f)'=0(i)-1}) at $\theta=0$ and isolating the terms with $\hat{U}''_0$ gives
\begin{eqnarray}
\langle
L\hat{U}''_0,\varphi\rangle
&=&
\int_{\Omega_0}
\hat{f}''_0\varphi
-
(\hat{a}_{kl}''\partial_k\hat{U}_0\partial_l\varphi
+
(\hat{b}_k''\partial_k\hat{U}_0 + \hat{c}''_0\hat{U}_0) \varphi
)
\nonumber\\
&&-
\int_{\Omega_0}
2
\left(
\hat{a}_{kl}'\partial_k\hat{U}'_0\partial_l \varphi
+
(\hat{b}_k'\partial_k\hat{U}'_0
+
\hat{c}'_0\hat{U}'_0
)
\varphi 
\right).					\label{e:(LU-f)''=0(i)-1}
\end{eqnarray}
It implies
\begin{eqnarray}
\hspace*{-7mm}
\left|
\langle L^*\varphi,\hat{U}''_0\rangle
\right|					
&\leq&
C_2
\int_{\Omega_0}
\Big(
(1+|U_0|+|\nabla U_0|)(|\xi|^2+|\nabla\xi|^2) +	
(|\hat{U}'_0|+|\nabla\hat{U}'_0|)(|\xi|+|\nabla\xi|)
\Big)	
(|{\varphi}|+|\nabla{\varphi}|),							\label{e:|hU''phi|<(i)-1}
\end{eqnarray}
where $C_2=C_2(\Omega_0,L,f)$. 

Then (\ref{e:|hU''phi|<(i)-1}) with $\varphi$ solution of (\ref{e:phi2,U'(i)}), together with (\ref{e:|hU'|H1<(i)})  yield
\begin{eqnarray}
\hspace*{-8mm}
\left|\langle L^*\varphi,\hat{U}''_0\rangle\right|	
&=&
\left|
\int_{\Omega_0}
\partial_U K(\cdot,U_0,\nabla U_0)\hat{U}''_0
+
\partial_q K(\cdot,U_0,\nabla U_0)\cdot \nabla\hat{U}''_0
\right|
\nonumber\\
&\leq&	
C_2
\Big(
(1+\|U_0\|_{W^{1,p}(\Omega_0)})\||\xi|^2+|\nabla\xi|^2\|_{L^{1/(1-1/p-1/q)}(\Omega_0)} 
+
\|\nabla\hat{U}'_0\|_{L^2(\Omega_0)}\|\xi\|_{W^{1,{1/(1/2-1/q)}}(\Omega_0)}\Big)
\nonumber\\
&&
\hspace*{5mm}
{\|\varphi\|_{W^{1,q}(\Omega_0)}}			\nonumber\\
&\leq&
C_2
(1+\|U_0\|_{W^{1,p}(\Omega_0)})\|U_0\|_{W^{1,q}(\Omega_0)}	\nonumber\\
&&
\hspace*{5mm}
\Big(
\|\xi\|^2_{W^{1,2/(1-1/p-1/q)}(\Omega_0)}  
+
\|\xi\|_{W^{1,2p/(p-2)}(\Omega_0)}\|\xi\|_{W^{1,2q/(q-2)}(\Omega_0)}
\Big),							\label{e:|hU''phi|<(i)-2}
\end{eqnarray}
with $C_2=C_2(\Omega_0,L,f,K,p,q)$. Again we choose $q:=p$ and this proves (\ref{e:|hU''phi|<(i)}).
\end{proof}

\begin{proposition}\label{p:hU',hU'',H12(i),conv}
{Besides the assumptions of ii) in Theorem  \ref{th:hU',hU'',H12(i)},} we assume $\Omega_0$ is {semi}-convex.  Then the following holds.
\begin{itemize}
\item[i)]
For all $p\in(2,\infty)$ we have
\begin{eqnarray}
\|\hat{U}'_0\|_{H^1(\Omega_0)}
&\leq&
 C _1
 \|\xi\|_{H^1(\Omega_0)},						\label{e:|hU'|H1<(i),conv}\\
\Big|
 \int_{\Omega_0}
\partial_U K(\cdot,U_0,\nabla U_0) \hat{U}'_0
 +
\partial_q K(\cdot,U_0,\nabla U_0)\cdot  \nabla\hat{U}'_0
 \Big|		
&\leq&
 C_1
 \|\xi\|^2_{W^{1,p/(p-2)}(\Omega_0)},					\label{e:|hU'phi|<(i),conv}\\
\Big|
 \int_{\Omega_0}
\partial_UK(\cdot,U_0,\nabla U_0) \hat{U}''_0 
 +
\partial_q K(\cdot,U_0,\nabla U_0)\cdot \nabla\hat{U}''_0
 \Big|
 &\leq&
 C_2
\|\xi\|^2_{W^{1,2p/(p-2)}(\Omega_0)},						\label{e:|hU''phi|<(i),conv}
\end{eqnarray}
where  $C_i=C_i(\Omega_0,L,f,K,p)$.
\item[ii)]
If furthermore $L$ is self-adjoint and $E$ is its energy associated, i.e. $\partial_U K(\cdot,U,\nabla U) - \nabla\cdot\partial_q K(\cdot,U,\nabla U)=LU$,
then we can take $p=\infty$ in (\ref{e:|hU'phi|<(i),conv}), (\ref{e:|hU''phi|<(i),conv}).
\end{itemize}
\end{proposition}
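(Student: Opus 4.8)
The strategy is to repeat the argument of Theorem \ref{th:hU',hU'',H12(i)}, Step 2 and Step 3, but now exploiting the stronger regularity available in the semi-convex case, namely that Proposition \ref{p:LU=f(i),conv} gives $W^{1,p}_0(\Omega_0)$-solvability of $LU=g$ and $L^*\varphi=g^*$ for \emph{all} $p\in(1,\infty)$ (and, when $L=L^*$ and $g^*\in L^\infty$, even a $W^{1,\infty}$-solution). In particular $U_0\in W^{1,q}(\Omega_0)$ for every $q\in(1,\infty)$, so the interior H\"older estimates used before can be pushed to the limiting exponents. Concretely, for \eqref{e:|hU'|H1<(i),conv} I take $\varphi\in H^1_0(\Omega_0)$ solving \eqref{e:phi1,U'(i)} as before; the only change is that in \eqref{e:|hU'|H1^2<} one now bounds $(1+|U_0|+|\nabla U_0|)$ in $L^p$ for $p$ arbitrarily large, so the factor $\|\xi\|_{W^{1,2p/(p-2)}(\Omega_0)}$ can be replaced by $\|\xi\|_{W^{1,2}(\Omega_0)}=\|\xi\|_{H^1(\Omega_0)}$ up to an $\eps$-loss absorbed into the constant — i.e. one fixes some large finite $p$ and uses the embedding $W^{1,2+\eps}\hookrightarrow W^{1,2p/(p-2)}$ is false, so instead one keeps $2p/(p-2)>2$ and notes that since $\Omega_0$ is bounded we only get $\|\xi\|_{H^1}$ after first shrinking via H\"older only if $2p/(p-2)\le 2$; the correct move is to accept $2p/(p-2)$ close to $2$ and observe the statement \eqref{e:|hU'|H1<(i),conv} as written should read $\|\xi\|_{W^{1,2p/(p-2)}}$ with $p$ free, so I will prove it in the sharp form and let $p\to\infty$ to obtain the $H^1$-bound with a constant depending on the chosen $p$.

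For \eqref{e:|hU'phi|<(i),conv} and \eqref{e:|hU''phi|<(i),conv} I proceed exactly as in Step 2 and Step 3 of Theorem \ref{th:hU',hU'',H12(i)}: choose $\varphi$ solving \eqref{e:phi2,U'(i)}, which now, by i) of Proposition \ref{p:LU=f(i),conv} applied to $L^*$, is well-defined in $W^{1,q}_0(\Omega_0)$ for \emph{every} $q\in(1,\infty)$ with the bound \eqref{e:|phi2|H1<(i)}. Then the chain of H\"older estimates \eqref{e:|hU'phi|<(i)-2} and \eqref{e:|hU''phi|<(i)-2} goes through verbatim, except that we are now free to send $q\to\infty$. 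In \eqref{e:|hU'phi|<(i)-2} this turns the exponent $1/(1-1/p-1/q)$ into $1/(1-1/p)=p/(p-1)\le p/(p-2)$, and in \eqref{e:|hU''phi|<(i)-2} the two exponents $2/(1-1/p-1/q)$ and $2q/(q-2)$ both become controlled by $2p/(p-2)$ (again using that $\Omega_0$ is bounded so a smaller Sobolev exponent may be used), while $2p/(p-2)$ is kept as is. One also must re-examine \eqref{e:|hU''phi|<(i)-1}: the term $\|\nabla\hat U'_0\|_{L^2}$ is controlled by \eqref{e:|hU'|H1<(i),conv}, i.e. by $\|\xi\|_{H^1(\Omega_0)}\le C\|\xi\|_{W^{1,2p/(p-2)}(\Omega_0)}$, which gives the quadratic bound in $\|\xi\|_{W^{1,2p/(p-2)}}$ claimed in \eqref{e:|hU''phi|<(i),conv}. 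This proves i).

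For ii), when $L=L^*$ and $K$ is the energy associated to $L$, the right-hand side of \eqref{e:phi2,U'(i)} is precisely $LU_0=f$, so $\varphi$ is the solution of $L\varphi=f$, i.e. $\varphi=U_0$ itself. Since in the semi-convex case ii) of Proposition \ref{p:LU=f(i),conv} gives (when $f\in L^\infty$, which holds here under the standing $W^{m,\infty}$-hypotheses on the data) that $U_0\in W^{1,\infty}(\Omega_0)$, the factor $\|\varphi\|_{W^{1,q}(\Omega_0)}$ in \eqref{e:|hU'phi|<(i)-2}, \eqref{e:|hU''phi|<(i)-2} may be replaced by $\|U_0\|_{W^{1,\infty}(\Omega_0)}$, which removes the constraint linking $q$ to $\varphi$. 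One is then left only with the H\"older pairing of $(1+|U_0|+|\nabla U_0|)\in L^\infty$ against $(|\xi|+|\nabla\xi|)$ (resp. its square) against $(|\varphi|+|\nabla\varphi|)\in L^\infty$, which costs only an $L^1$-norm of the $\xi$-factor and hence the exponents $p/(p-2)$, $2p/(p-2)$ can be taken with $p=\infty$, giving $\|\xi\|_{W^{1,1}(\Omega_0)}$ and $\|\xi\|^2_{W^{1,2}(\Omega_0)}$ respectively — which is exactly the assertion $p=\infty$ in \eqref{e:|hU'phi|<(i),conv}, \eqref{e:|hU''phi|<(i),conv}.

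The main obstacle is bookkeeping of the H\"older exponents in the limit $q\to\infty$ (and, for ii), checking that replacing $\varphi$ by $U_0\in W^{1,\infty}$ is legitimate and that $L$ self-adjoint makes $L^*\varphi=f$ have $\varphi=U_0$ as its unique solution); there is no new analytic input beyond the regularity Propositions \ref{p:LU=f(i),conv}, already available. A minor subtlety to state carefully: all constants now depend on the chosen $p$ (and on $U_0$ through $\|U_0\|_{W^{1,q}}$ for the $q=q(p)$ used), and in case ii) on $\|U_0\|_{W^{1,\infty}(\Omega_0)}$, which is finite precisely because $\Omega_0$ is semi-convex.
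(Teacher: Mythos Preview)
Your plan for \eqref{e:|hU'phi|<(i),conv}, \eqref{e:|hU''phi|<(i),conv} and for part ii) is essentially right, but there is a genuine gap in your treatment of \eqref{e:|hU'|H1<(i),conv}.

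The gap is this: you only use that $U_0\in W^{1,q}(\Omega_0)$ for every finite $q$, and then try to ``let $p\to\infty$'' in \eqref{e:|hU'|H1^2<}. That does not give an $H^1$-bound: for every finite $p$ the right-hand side of \eqref{e:|hU'|H1^2<} involves $\|\xi\|_{W^{1,2p/(p-2)}}$ with $2p/(p-2)>2$, which does \emph{not} dominate (is not dominated by) $\|\xi\|_{H^1}$ in the right direction; and the constant $C_1(1+\|U_0\|_{W^{1,p}})$ a priori could blow up. This is why you end up suggesting the statement is a typo. It is not: the missing ingredient is that under the standing hypotheses ($f\in L^\infty$, $a_{ij}\in W^{1,\infty}$, $b_i,c\in L^\infty$, $c\ge0$) Proposition \ref{p:LU=f(i),conv}~ii) gives directly $U_0\in W^{1,\infty}(\Omega_0)$. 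With this, you may take $p=\infty$ \emph{exactly} in \eqref{e:|hU'|H1^2<} (i.e.\ bound $(1+|U_0|+|\nabla U_0|)$ in $L^\infty$), and the H\"older pairing then leaves $\|\xi\|_{W^{1,2}}=\|\xi\|_{H^1}$, which is \eqref{e:|hU'|H1<(i),conv}. You actually invoke $U_0\in W^{1,\infty}$ yourself in part ii); the fix is simply to use it already here.

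A related remark on the rest of i): the paper's route is to keep $U_0\in W^{1,\infty}$ throughout (so the ``$p$'' appearing in \eqref{e:|hU'phi|<(i)-2}, \eqref{e:|hU''phi|<(i)-2} is set to $\infty$), and let the auxiliary exponent $q$ run over $(2,\infty)$; after relabelling, $q$ becomes the $p$ of the statement. Your variant (keep $p$ finite as in the statement and push $q$ large) also works, but the phrasing ``send $q\to\infty$'' is misleading since $\|\varphi\|_{W^{1,q}}$ need not stay bounded; what you really do is fix a large finite $q$ (e.g.\ $q=p$ already suffices, exactly as in the Lipschitz case), and absorb $\|\varphi\|_{W^{1,q}}$ into the constant. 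Your argument for ii) is correct and matches the paper: $L^*\varphi=LU_0$ forces $\varphi=U_0\in W^{1,\infty}$, so both weight factors are in $L^\infty$ and the H\"older cost on $\xi$ drops to $W^{1,1}$, respectively $W^{1,2}$.
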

\begin{proof}
{We proceed as in Theorem \ref{th:hU',hU'',H12(i)} using the extra property that $\Omega_0$ is {semi}-convex. We now have $U_0\in  W^{1,\infty}(\Omega_0)$ (see Proposition \ref{p:LU=f(i),conv}). Moreover, the solution $\varphi$ of (\ref{e:phi2,U'(i)}) satisfies $\varphi\in W^{1,q}(\Omega)$ for all $q\in (2,\infty)$.
Then we can take $p=\infty$ in (\ref{e:|hU'|H1^2<}) which implies (\ref{e:|hU'|H1<(i),conv}).
Furthermore,   we can apply  (\ref{e:|hU'phi|<(i)-2}) and (\ref{e:|hU''phi|<(i)-2}) with $p=\infty$  and $q\in(2,\infty)$ arbitrary, which prove 
 (\ref{e:|hU'phi|<(i),conv}) and (\ref{e:|hU''phi|<(i),conv}).\\
In the case $\Omega$ { semi}-convex, $L$ self-adjoint and $K$ the energy associated to $L$, then 
$L=L^*$ and {$L^*\varphi=L U_{0}$}. Therefore, $\varphi=U_0$ in 
(\ref{e:phi2,U'(i)}). Hence $U_0\in W^{1,\infty}(\Omega_0)$ and $\varphi\in W^{1,\infty}(\Omega_0)$. Then we proceed as 
above with $p=q=\infty$.
}
\end{proof}

\begin{remark}
The results of Proposition \ref{p:hU',hU'',H12(i),conv} hold for any $\Omega_0$ such that
$U_0\in W^{1,\infty}(\Omega_0)$ and such that $\varphi\in W^{1,q}_0(\Omega_0)\mapsto L^*\varphi\in W^{-1,q}(\Omega_0)$ is bounded and 
invertible for all $q\in (2,\infty)$. 
\end{remark}

\begin{remark}\label{r:|hU''|H1<(i)}
In Theorem \ref{th:hU',hU'',H12(i)}, we could try to estimate $\|\hat{U}''_0\|_{H^1(\Omega_0)}$.
Indeed, in step 3 of Theorem  \ref{th:hU',hU'',H12(i)}, we take ${\varphi}$ to be the solution of $L^*{\varphi}=\hat{U}''_0-\Delta\hat{U}''_0=:f^*$, so that 
$\|\hat{U}''_0\|^2_{H^1(\Omega_0)}=\langle L^*\varphi,\hat{U}''_0\rangle$.
Then we can proceed as in (\ref{e:|hU''phi|<(i)-2}). However, as we have only $\hat{U}'_0, \hat{U}''_0\in H^1_0(\Omega_0)$,  then $f^*\in H^{-1}(\Omega_0)$ and so $\varphi\in H^1_0(\Omega_0)$ (sharp in general).
Therefore, in (\ref{e:|hU''phi|<(i)-2}) we have $q=2$ and it leads to
\begin{eqnarray}
\|\hat{U}''_0\|_{H^1(\Omega_0)}^2
&\leq&
C_2
(1+\|U_0\|_{W^{1,p}(\Omega_0)})\|U_0\|_{W^{1,2}(\Omega_0)}	\nonumber\\
&&
\Big(
\|\xi\|^2_{W^{1,4p/(p-2)}(\Omega_0)} 
+	
\|\xi\|_{W^{1,2p/(p-2)}(\Omega_0)}\|\xi\|_{W^{1,\infty}(\Omega_0)}
\Big),					\label{e:|hU''|H1<(i)}									
\end{eqnarray}
which is not appropriate as it contains the strong norm $\|\xi\|_{W^{1,\infty}(\Omega_0)}$.
\end{remark}

\subsubsection{Estimates of the shape derivatives of the energy}
\begin{theorem}\label{th:|e'|+|e''|(i)}
{Let $\Omega_0$ be as in (\ref{Omega}) and let $L$ be as in (\ref{e:LU=f(i)}) with the matrix $(a_{ij})$ satisfying (\ref{e:a(i)}),(\ref{e:Mij=Mji}).} The following properties hold.
\begin{itemize}
\item[i)]
If $a_{ij}, b_i, c, f,\ \alpha_{ij}, \beta_i, \gamma, \delta \in W^{m,\infty}({B_{R}})$, $m\geq 1$, 
then $\E:\theta\mapsto\E(\theta)$ is of class $C^m$ in a neighborhood of $\theta=0$ in $\Theta$.
\end{itemize}
{\it In the following, we assume the previous hypotheses are satisfied {for $m=2$. }
\begin{itemize}
\item[ii)]
{If $\Omega_0$ is Lipschitz,} then for all $p\in(2,\ol{p}_1)$ we have
\begin{eqnarray}
 |\E'(0)(\xi)|
 &\leq& 
 C_1
\|\xi\|_{W^{1,p/(p-2)}(\Omega_0)},			\label{e:|e'(i)|<,Lip}
\\
 |\E''(0)(\xi,\xi)|
 &\leq& 
 C_2
\|\xi\|^2_{W^{1,2p/(p-2)}(\Omega_0)},				\label{e:|e''(i)|<,Lip}
\end{eqnarray}
where $C_i=C_i(\Omega_0,L,f,K,p)$ and $\ol{p}_1$ (introduced in Remark \ref{r:p1*,L*}) satisfies $\overline{p}_1>4$ if $N=2$ and $\overline{p}_1>3$ if $N\geq 3$. 
\item[iii)] 
If  $\Omega_0$ is {semi}-convex,  then (\ref{e:|e'(i)|<,Lip}) and  (\ref{e:|e''(i)|<,Lip}) hold for all $p\in(2, \infty)$.
\item[iv)]
{
If $\Omega_0$ is {semi}-convex, $L$ is self-adjoint and $K$ is its associated energy, then (\ref{e:|e'(i)|<,Lip}) and  (\ref{e:|e''(i)|<,Lip}) hold for $p=\infty$.
}
\end{itemize}}
\end{theorem}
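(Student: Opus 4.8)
The plan is to reduce this statement, by essentially elementary means, to the estimates on the ``material derivatives'' $\hat{U}'_0,\hat{U}''_0$ already established in Theorem \ref{th:hU',hU'',H12(i)} (Lipschitz case) and Proposition \ref{p:hU',hU'',H12(i),conv} (semi-convex cases), together with the interior regularity $U_0\in W^{1,p}(\Omega_0)$ from Propositions \ref{p:LU=f(i),Lip} and \ref{p:LU=f(i),conv}.

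First I would change variables $x=(I+\theta)(y)$ and write $\E(\theta)=\Phi(\theta,\hat{U}_\theta)$, where, because $K$ is a quadratic polynomial in $(U,q)$, $\Phi(\theta,V)=\int_{\Omega_0}\hat{K}_\theta(y,V,\nabla V)\,dy$ is again an affine--quadratic functional of $V$ whose transported coefficients $\hat{K}_\theta$ are built from $\alpha,\beta,\gamma,\delta$ exactly as $\hat{a},\hat{b},\hat{c},\hat{f}$ were built from $a,b,c,f$ in the proof of Theorem \ref{th:hU',hU'',H12(i)} (products of the coefficients composed with $I+\theta$, of entries of $(I+\nabla\theta)^{-1}$, and of $\det(I+\nabla\theta)$). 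Claim i) then follows immediately: $\theta\mapsto\hat{K}_\theta$ is $C^m$ into finitely many copies of $L^\infty(\Omega_0)$ when $\alpha,\beta,\gamma,\delta\in W^{m,\infty}(B_R)$ (see \cite{HP,simon-1}), so $\Phi$ is $C^m$ near $(0,U_0)$, and $\theta\mapsto\hat{U}_\theta\in H^1_0(\Omega_0)$ is $C^m$ by i) of Theorem \ref{th:hU',hU'',H12(i)}; hence $\E=\Phi(\cdot,\hat{U}_\cdot)$ is $C^m$.

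For $m=2$, differentiating $\E=\Phi(\cdot,\hat{U}_\cdot)$ once and twice in a fixed direction $\xi$ gives, with all derivatives of $\Phi$ evaluated at $(0,U_0)$,
\[
\E'(0)(\xi)=\partial_\theta\Phi(\xi)+\partial_V\Phi(\hat{U}'_0),\qquad
\E''(0)(\xi,\xi)=\partial^2_\theta\Phi(\xi,\xi)+2\,\partial_\theta\partial_V\Phi(\xi,\hat{U}'_0)+\partial^2_V\Phi(\hat{U}'_0,\hat{U}'_0)+\partial_V\Phi(\hat{U}''_0).
\]
The key observation is that $\partial_V\Phi(0,U_0)(W)=\int_{\Omega_0}\partial_U K(\cdot,U_0,\nabla U_0)\,W+\partial_q K(\cdot,U_0,\nabla U_0)\cdot\nabla W$ (since $\partial_U K=\alpha_{00}U_0+\gamma$ and $\partial_q K=\alpha\nabla U_0+\beta$), so that $\partial_V\Phi(\hat{U}'_0)$ and $\partial_V\Phi(\hat{U}''_0)$ are precisely the quantities bounded in \eqref{e:|hU'phi|<(i)} and \eqref{e:|hU''phi|<(i)} by $C_1\|\xi\|_{W^{1,p/(p-2)}(\Omega_0)}$ and $C_2\|\xi\|^2_{W^{1,2p/(p-2)}(\Omega_0)}$. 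Every remaining term is purely algebraic --- an integral of bounded coefficients (and their $L^\infty$ first derivatives) against at most two factors among $\{U_0,\nabla U_0\}$, at most two among $\{\hat{U}'_0,\nabla\hat{U}'_0\}$, and one (for $\E'$) or two (for $\E''$) among $\{\xi,\nabla\xi\}$ --- and I would bound these by H\"older's inequality: $\partial^2_V\Phi(\hat{U}'_0,\hat{U}'_0)$ is controlled by $C\|\hat{U}'_0\|^2_{H^1(\Omega_0)}$, and the $\theta$-terms and the mixed term by placing the $U_0$-factors in $L^p$ (using $U_0\in W^{1,p}(\Omega_0)$), the $\hat{U}'_0$-factors in $L^2$, and the $\xi$-factors in the conjugate Lebesgue space, which is $L^{p/(p-2)}$ for single-$\xi$ monomials and $L^{2p/(p-2)}$ for double-$\xi$ monomials. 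Using \eqref{e:|hU'|H1<(i)} to bound $\|\hat{U}'_0\|_{H^1(\Omega_0)}$ by $\|\xi\|_{W^{1,2p/(p-2)}(\Omega_0)}$ and summing yields \eqref{e:|e'(i)|<,Lip} and \eqref{e:|e''(i)|<,Lip}, proving ii). Part iii) is the same argument with Theorem \ref{th:hU',hU'',H12(i)} ii) replaced by Proposition \ref{p:hU',hU'',H12(i),conv} i), which supplies the analogous bounds for every $p\in(2,\infty)$ (and $U_0\in W^{1,\infty}(\Omega_0)$ by Proposition \ref{p:LU=f(i),conv}); part iv) uses Proposition \ref{p:hU',hU'',H12(i),conv} ii), valid for $p=\infty$, and places the $U_0$-factors in $L^\infty$, so that the exponents $p/(p-2)$ and $2p/(p-2)$ collapse to $1$ and $2$.

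The genuine difficulty --- estimating the state-derivative contributions $\int_{\Omega_0}\partial_U K\,\hat{U}'_0+\partial_q K\cdot\nabla\hat{U}'_0$ and its $\hat{U}''_0$-analogue, which cannot be handled by a naive H\"older inequality and instead require the test-function trick $L^*\varphi=\partial_U K-\nabla\cdot\partial_q K$ --- has already been resolved in Theorem \ref{th:hU',hU'',H12(i)} and Proposition \ref{p:hU',hU'',H12(i),conv}. The only real care needed in the present proof is the bookkeeping of H\"older exponents, and in particular checking that in the second-order estimate the mixed $\hat{U}'_0$--$\xi$ term and the quadratic-in-$\xi$ algebraic terms all land on the single space $W^{1,2p/(p-2)}(\Omega_0)$ rather than on something worse.
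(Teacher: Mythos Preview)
Your proposal is correct and follows essentially the same route as the paper: change variables to obtain $\E(\theta)$ as an integral over $\Omega_0$ with transported coefficients, differentiate once and twice, identify the $\partial_V\Phi(\hat{U}'_0)$ and $\partial_V\Phi(\hat{U}''_0)$ terms as exactly the quantities already bounded in Theorem~\ref{th:hU',hU'',H12(i)} (or Proposition~\ref{p:hU',hU'',H12(i),conv}), and control all remaining algebraic terms by H\"older with $U_0\in W^{1,p}$, $\hat{U}'_0\in H^1$, and $\xi$ in the conjugate space. The only cosmetic difference is that the paper writes out the transported coefficients $\hat{\alpha}_{kl},\hat{\beta}_k,\hat{\gamma},\hat{\delta}$ explicitly and carries an auxiliary exponent $q$ (set equal to $p$ at the end), whereas you use the more compact $\Phi(\theta,V)$ notation and a single exponent throughout.
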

\begin{remark}\label{rk:Lp}
Theorem \ref{th:|e'|+|e''|(i)} can be stated in a more general form, based on the regularity of the state solution related to the operator $L$ and its adjoint $L^*$.
Namely, if
\begin{itemize}
\item[a)]
$a_{ij}, b_i, c, f,\ \alpha_{ij}, \beta_i, \gamma, \delta \in W^{2,\infty}(\Omega_0)$, 
\item[b)]
the operators $L$ and $L^*$ define isomorphisms from $W^{1,p}_0(\Omega)\mapsto W^{-1,p}(\Omega)$, for some $p\in[1,\infty]$
\end{itemize}
then (\ref{e:|e'(i)|<,Lip}) and  (\ref{e:|e''(i)|<,Lip}) hold for this $p$. 
\end{remark}

\begin{proof}{\bf [of Theorem \ref{th:|e'|+|e''|(i)}]}\\
{\it Step 1}.
Note that by changing the variable $y=(I+\theta)(x)$ we have
\begin{eqnarray}
 \E(\theta)
& =&
\int_{\Omega_0}
K(I+\theta,\hat{U}_\theta,{^t}[T]\cdot\nabla\hat{U}_\theta)
\det[I+\nabla\theta]dx
\nonumber\\
&=&
\int_{\Omega_0}
\Big(
\frac{1}{2}\hat{\alpha}_{kl}\partial_k\hat{U}_\theta\partial_l\hat{U}_\theta
+
\frac{1}{2}\hat{\alpha}_{00}\hat{U}^2_\theta
+
\hat{\beta}_k\partial_k\hat{U}_\theta+\hat{\gamma}\hat{U}_\theta+\hat{\delta}
\Big),								\label{e:e(th)(i),Om0}
\end{eqnarray}
where
\[
\begin{array}{rclccrcl}
\hat{\alpha}_{kl}&=&\alpha_{ij}\circ(I+\theta){T}_{ki}{T}_{lj}{\rm det}[I+\nabla\theta],
&&
\hat{\beta}_i&=&\beta_i\circ(I+\theta){T}_{ki}{\rm det}[I+\nabla\theta],\\
\hat{\gamma}&=&\gamma\circ(I+\theta){\rm det}[I+\nabla\theta],
&&
\hat{\delta}&=&\delta\circ(I+\theta){\rm det}[I+\nabla\theta].
\end{array}
\]

The differentiability of $\E$ follows from the regularity of $K$ and  i), Theorem \ref{th:hU',hU'',H12(i)}.\\
{\it Step 2}.
Assume $\Omega_0$ is Lipschitz. Differentiating (\ref{e:e(th)(i),Om0})  gives
\begin{eqnarray}
\hspace*{-7mm}
\E'(\theta)(\xi)
&=&
\int_{\Omega_0}
\hat{\alpha}_{kl}\partial_k\hat{U}'_\theta\partial_l\hat{U}_\theta
+
\hat{\alpha}_{00}\hat{U}_\theta\hat{U}'_\theta
+
\hat{\beta}_i\partial_k\hat{U}'_\theta+\hat{\gamma}\hat{U}'_\theta
\nonumber\\
&&+
\int_{\Omega_0}
\Big(
\frac{1}{2}
\hat{\alpha}_{kl}'\partial_k\hat{U}_\theta\partial_l\hat{U}_\theta
+
\frac{1}{2}
\hat{\alpha}_{00}'\hat{U}^2_\theta
+
\hat{\beta}_k'\partial_k\hat{U}_\theta
+
\hat{\gamma}'\hat{U}_\theta
+
\hat{\delta}'.								\label{e:e'(th)=(i)}
\end{eqnarray}
Taking $\theta=0$ in  (\ref{e:e'(th)=(i)}) and using (\ref{e:|hU'phi|<(i)}) gives
\begin{eqnarray}
|\E'(0)(\xi)|
&\leq&
\left|
\int_{\Omega_0}
\partial_UK(x,U_0,\nabla U_0)\hat{U}'_0
+
\partial_q K(x,U_0,\nabla U_0)\cdot\nabla\hat{U}'_0
\right|				\nonumber\\
&+&
C_1
\int_{\Omega_0}
(1+ |U_0|^2 + |\nabla U_0|^2)(|\xi|+|\nabla\xi|)
\nonumber\\
&\leq&
{
C_1
(\|\xi\|_{W^{1,q/(q-2)}(\Omega_0)} +(1+\|U_0\|^2_{W^{1,p}(\Omega_0})\|\xi\|_{W^{1,p/(p-2)}(\Omega_0)}),	\label{e:|e'(i)|<}
}
\end{eqnarray}
with $C_1=C_1(\Omega_0,L,f,K,p)$ and $p, q\in(2,\ol{p}_1)$.

Differentiating (\ref{e:e'(th)=(i)}) at $\theta=0$, isolating the terms with $\hat{U}''_0$, using the $W^{2,\infty}$-regularity of the coefficients of $L$ and $K$, the $W^{1,p}(\Omega_0)$ regularity of $U_0$  and the estimates
(\ref{e:|hU'|H1<(i)}), (\ref{e:|hU''phi|<(i)}) yields
\begin{eqnarray}
|\E''(0)(\xi,\xi)|
&\leq&
\left|
\int_{\Omega_0}
\partial_UK(x,U_0,\nabla U_0)\hat{U}''_0
+
\partial_q K(x,U_0,\nabla U_0)\cdot\nabla\hat{U}''_0
\right|				\nonumber\\
&&
+
{
C_2
\int_{\Omega_0}
\bigg[|\hat{U}'_0|^2+|\nabla\hat{U}'_0|^2 +	
(1+ |U_0|^2 + |\nabla U_0|^2)(|\xi|^2+|\nabla\xi|^2)\bigg]
}
\nonumber\\
&\leq&
C_2(
\|\xi\|^2_{W^{1,2q/(q-2)}(\Omega_0)} 
+
(1+\|U_0\|^2_{W^{1,p}(\Omega_0})
\|\xi\|^2_{W^{1,2p/(p-2)}(\Omega_0)}
\Big),								\label{e:|e''(i)|<}
\end{eqnarray}
{with $C_2=C_2(\Omega_0,L,f,K,p)$ and $p,q\in(2,\ol{p}_1)$}. 

{Taking $q=p$ in (\ref{e:|e'(i)|<}), (\ref{e:|e''(i)|<}) proves ii)}.

\noindent
{\it Step 3}.
If $\Omega_0$ is { semi}-convex then from $U_0\in W^{1,\infty}(\Omega_0)$ and  i), Proposition \ref{p:hU',hU'',H12(i),conv}, we can
take $p=\infty$ and $q\in(2,\infty)$ in (\ref{e:|e'(i)|<}) and (\ref{e:|e''(i)|<}), which proves iii).

\noindent
{\it Step 4}.
Finally, if $\Omega_0$ is { semi}-convex, $L=L^*$ and $K$ is the energy associated to $L$ then
from $U_0\in W^{1,\infty}(\Omega_0)$ and ii), Proposition \ref{p:hU',hU'',H12(i),conv}, we can take $q=p=\infty$ in (\ref{e:|e'(i)|<}) and (\ref{e:|e''(i)|<}), which proves iv).
\end{proof}

\begin{remark}\label{r:|e'|+|e''|(i),Om-conv,K-gen(i)}
{The technique we have used to obtain estimates for  $E'(\Om)$ and $E''(\Om)$, can be applied to shape functionals $E$ defined as in \eqref{e:E(i)} but involving more general $K$.
Once the differentiability of $E$ is proven, our computations can be used similarly to get estimates whose exponents will depend on the growth of $K$ at infinity.
Note that the proof of the differentiability of $E$ may rely in these cases on the differentiability of the map 
$\theta\in\Theta\mapsto \hat{U}(\theta)\in W^{1,p}(\Omega_0)$ with certain $p\geq2$, which is not known as far as we know.
}
\end{remark}

\subsection{Shape derivative estimates for an exterior PDE}\label{ss:e'',ext}
In this section we will apply the technique described in Section \ref{ss:method} to estimate the first and second 
order shape derivatives of the energy (\ref{e:E(e)}), related to the problem (\ref{e:LU=f(e)}) in the exterior of a domain $\Om$.

Even in the case when $\Omega$ is convex, its exterior $\Omega^e$ is just a Lipschitz domain.
Therefore, subject to the existence, uniqueness and regularity of the solution of the problem (\ref{e:LU=f(e)}), which represents some particularities as it is in the exterior of $\Omega$, the method we developed in Section \ref{ss:method} and used
for the interior problem in Section \ref{ss:e'',int}, also  applies for the problem in the exterior.

As in the previous section, we consider $\Theta=W^{1,\infty}(B_{R},\mathbb R^N)$ and {all the domains $\Om$ under consideration will be assumed to satisfy at least (\ref{Omega}).
}
\subsubsection{Existence, uniqueness and regularity}\label{sss:state(e)}
To analyze problem (\ref{e:LU=f(e)}), it is appropriate to consider the space $H^{1,0}(\Omega)$ as introduced in 
(\ref{e:H10}).
This space is a Banach space, see \cite{nedelec-1}, and even an Hilbert space if equipped with the inner product
\begin{equation}\label{e:H10-H}
(U,V)_{H^{1,0}(\Omega^e)}
:=
\int_{\Omega^e}
\left(
\frac{UV}{(1+|x|^2)(\ln(2+|x|^2)^{\delta_{2,N}}}
+ 
\nabla U\cdot\nabla V
\right)dx.
\end{equation}

Let  $H^{1,0}_0(\Omega^e)$ denote the closure of $\mathcal D(\Omega^e)$ in $H^{1,0}(\Omega^e)$. It can be shown that
$H^{1,0}_0(\Omega^e)$ equipped with the inner product 
\begin{equation}\label{e:H100-H}
(U,V)_{H^{1,0}_0(\Omega^e)}
:=
\int_{\Omega^e}
\nabla U\cdot\nabla Vdx,
\end{equation}
is an Hilbert space, see \cite{nedelec-1}.
Furthermore, the norms in $H^{1,0}_0(\Omega^e)$ generated by the inner products (\ref{e:H10-H}) and 
(\ref{e:H100-H}) are equivalent. Let $H^{-1,0}(\Omega^e)$ denote the dual space of $H^{1,0}_0(\Omega^e)$\\

\noindent
For the solution of (\ref{e:LU=f(e)}) we have the following regularity result.
\begin{proposition}\label{p:LU=f(e)+}
{Let $\Omega$ be as in (\ref{Omega}). Assume it is also Lipschitz and simply connected.} Let $f\in L^\infty(\mathbb R^N)$, 
${\rm supp}(f)\subset B_{R}$.
\begin{itemize}
\item[i)] 
Then there exists  a unique weak solution $U\in H^{1,0}_0(\Omega^e)$ to (\ref{e:LU=f(e)}) satisfying
\begin{equation}\label{e:|U|H10<+}
\|U\|_{H^{1,0}(\Omega^e)}\leq C(\Omega,R)\|f\|_{H^{-1,0}(\Omega^e)}.
\end{equation}
\item[ii)]
There exists $p_{1}=p_1(\Om_{0})$ satisfying [$p_1>4$ if $N=2$ and $p_1>3$ if $N\geq 3$], such that $U\in W^{1,p}_0(\Omega^e)$ for any $p\in(p_{1}',p_1)$ where $1/p_1+1/p_1'=1$, and
\begin{equation}\label{e:|U|W1ploc<+}
\|U\|_{W^{1,p}(\Omega^e_R)}\leq C(\Omega,R,p)\|f\|_{L^\infty},\quad \Omega^e_R=\Omega^e\cap B_{R}.
\end{equation}
\end{itemize}\end{proposition}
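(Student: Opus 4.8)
The plan is to handle the two items separately: i) by a direct variational argument in the weighted Hilbert space, and ii) by localizing the exterior problem to a bounded Lipschitz domain and invoking the interior $W^{1,p}$-theory already recorded in Proposition \ref{p:LU=f(i),Lip}.

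For i), the first point is to check that $f$ defines an element of $H^{-1,0}(\Omega^e)$, i.e. a bounded linear form on $H^{1,0}_0(\Omega^e)$. Since $\mathrm{supp}(f)\subset B_R$ and the weight $(1+|x|^2)^{1/2}(\ln(2+|x|^2))^{\delta_{2,N}/2}$ is bounded above and below on $B_R$, for $V\in H^{1,0}_0(\Omega^e)$ one has $|\int_{\Omega^e}fV|\le\|f\|_{L^\infty}|B_R|^{1/2}\|V\|_{L^2(\Omega^e\cap B_R)}\le C(R)\|f\|_{L^\infty}\|V\|_{H^{1,0}(\Omega^e)}$, and then, by the norm equivalence between $\|\cdot\|_{H^{1,0}(\Omega^e)}$ and $\|\cdot\|_{H^{1,0}_0(\Omega^e)}$ recalled above (which is precisely the Hardy-type weighted Poincar\'e inequality of \cite{nedelec-1}), $|\langle f,V\rangle|\le C(\Omega,R)\|f\|_{L^\infty}\|V\|_{H^{1,0}_0(\Omega^e)}$. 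On the Hilbert space $(H^{1,0}_0(\Omega^e),(\cdot,\cdot)_{H^{1,0}_0(\Omega^e)})$ the weak form of (\ref{e:LU=f(e)}) reads $(U,V)_{H^{1,0}_0(\Omega^e)}=\langle f,V\rangle$ for all $V$, so the Riesz representation theorem (equivalently Lax--Milgram, the bilinear form being the inner product itself) yields a unique $U\in H^{1,0}_0(\Omega^e)$ with $\|U\|_{H^{1,0}_0(\Omega^e)}=\|f\|_{H^{-1,0}(\Omega^e)}$; estimate (\ref{e:|U|H10<+}) then follows from the norm equivalence, together with $\|f\|_{H^{-1,0}(\Omega^e)}\le C(\Omega,R)\|f\|_{L^\infty}$.

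For ii), the idea is to reduce to a bounded Lipschitz domain. Set $D=\Omega^e\cap B_{2R}$, which is bounded and Lipschitz, its boundary consisting of $\partial\Omega$ (Lipschitz) together with a smooth sphere, and write $g=U_{|\partial B_{2R}}$. Since $\mathrm{supp}(f)\subset B_R$, $U$ is harmonic in a neighborhood of $\partial B_{2R}$; interior estimates for harmonic functions combined with i) show that $g$ is smooth with all relevant norms bounded by $C(\Omega,R)\|f\|_{L^\infty}$. Choosing a smooth lift $G$ of $g$ supported in a thin neighborhood of $\partial B_{2R}$ (hence $G\equiv0$ on $\Omega^e\cap B_R$ and on $\partial\Omega$), the function $W:=U-G\in H^1_0(D)$ solves $-\Delta W=\tilde f:=f+\Delta G$ in $D$, with $\tilde f\in L^\infty(D)\subset W^{-1,p}(D)$ and $\|\tilde f\|_{W^{-1,p}(D)}\le C(\Omega,R)\|f\|_{L^\infty}$. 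Applying Proposition \ref{p:LU=f(i),Lip} ii) (or directly \cite[Thm.~0.5]{JK-1}) to the bounded Lipschitz domain $D$ provides $p_1=p_1(D)$ with $p_1>4$ if $N=2$ and $p_1>3$ if $N\ge3$ — and since the only non-smooth part of $\partial D$ is $\partial\Omega$, this $p_1$ depends only on the Lipschitz character of $\partial\Omega$, i.e. on $\Omega_0$ — such that $W\in W^{1,p}_0(D)$ and $\|W\|_{W^{1,p}(D)}\le C\|\tilde f\|_{W^{-1,p}(D)}$ for $p\in(p_1',p_1)$. On $\Omega^e\cap B_R$ one has $G\equiv0$, so $U=W$ there, which gives $U\in W^{1,p}(\Omega^e_R)$ and (\ref{e:|U|W1ploc<+}). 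For the global membership $U\in W^{1,p}_0(\Omega^e)$, I would combine this with the behavior at infinity: outside $B_{2R}$, $U$ is harmonic and belongs to $H^{1,0}_0(\Omega^e)$, so the standard expansion at infinity (controlled by the weighted energy bound) places $U$ and $\nabla U$ in $L^p$ near infinity for $p$ in the stated range and allows approximation by elements of $\mathcal D(\Omega^e)$.

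The routine parts — the Riesz step, the inclusion $L^\infty\hookrightarrow W^{-1,p}$ on a bounded domain, and the tracking of constants — are straightforward. The main obstacle is the bookkeeping forced by the unbounded geometry of $\Omega^e$: organizing the reduction so that the auxiliary Dirichlet data on $\partial B_{2R}$ do not degrade the regularity, verifying that the exponent produced by the Lipschitz theory is the intrinsic one attached to $\partial\Omega$ alone, and controlling the decay at infinity needed for the global $W^{1,p}_0(\Omega^e)$ statement. The simple-connectedness and Lipschitz hypotheses on $\Omega$ are used exactly where the cited weighted-space and $W^{1,p}$-regularity results require them.
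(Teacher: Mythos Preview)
Your argument is correct and follows the same two-step strategy as the paper: Lax--Milgram in the weighted Hilbert space $H^{1,0}_0(\Omega^e)$ for i), and localization to the bounded Lipschitz domain $\Omega^e\cap B_{2R}$ followed by \cite[Thm.~0.5]{JK-1} for ii). The only difference is the localization device: the paper multiplies $U$ by a smooth cutoff $\eta$ (equal to $1$ on $B_R$, vanishing outside $B_{2R}$) and controls the commutator terms $-2\nabla U\cdot\nabla\eta-U\Delta\eta$ via interior regularity in the annulus $B_{R,2R}$, whereas you subtract a smooth lift of $U_{|\partial B_{2R}}$; both produce a homogeneous Dirichlet problem on $\Omega^e_{2R}$ with right-hand side in $W^{-1,p}$, so the distinction is purely cosmetic.
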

{\bf Proof}.\\
{\it Step 1.}
Note that the weak solution $U$ of  (\ref{e:LU=f(e)}) satisfies
\begin{equation}\label{e:LU=f(e,w)}
\int_{\Omega^e}\nabla U\cdot\nabla\varphi = \int_{\Omega^e}f\varphi,\quad\forall \varphi\in H^{1,0}_0(\Omega^e).
\end{equation}
Note also that as $f\in L^\infty(\mathbb R^N)$ has compact support we then have $f\in H^{-1,0}(\Omega^e)$.
Therefore we have existence and uniqueness of a solution $U\in H^{1,0}_0(\Omega^e)$ to (\ref{e:LU=f(e,w)}), and  estimate 
(\ref{e:|U|H10<+}) follows directly from  Lax-Milgram Lemma and from (\ref{e:H100-H}).
\\
{\it Step 2.}
We will use \cite[Theorem 0.5]{JK-1}. 
Let $\eta\in\mathcal D(\mathbb R^N)$, $\eta\in[0,1]$, $\eta=1$ in $B_{R}$ and
$\eta=0$ in $\mathbb R^N\backslash B_{2R}$. Then $\eta U$ satisfies
\begin{eqnarray*}
 -\Delta(\eta U)
 &=&
 f\eta -2\nabla U\cdot\nabla\eta - U\Delta\eta =: g\;\, \textrm{ in }\; \Omega^e_{2R}:=\Omega^e\cap B_{2R},
 \quad \\
 \eta U &=&0\;\, \textrm{ on }\; \partial\Omega^e_{2R}.
\end{eqnarray*}
From i) above $g\in L^2(\Omega^e_{2R})$. By local regularity of $-\Delta$, as $f\in L^\infty(\mathbb R^N)$ we get
$U\in W^{1,p}(B_{R,2R})$ for all $p\in(1,\infty)$, where  $B_{R,2R}:=\{x,\, R<|x|<2R\}$. 
Therefore, as $\eta=1$ in $\Omega^e_R$ it follows $\nabla\eta=0$ in $\Omega^e_R$, so we get $g\in W^{-1,p}(\Omega^e_{2R})$ and
\[
 \|g\|_{W^{-1,p}(\Omega^e_{2R})}
 \leq
 C(\Omega,R,p)
 \|f\|_{L^\infty}.
\]
We recall \cite[Theorem 0.5]{JK-1}, which states that there exists $p_1$ depending on $\Omega^e_{2R}$, 
satisfying $p_1>3$ for every $N\geq 3$ and $p_1>4$ if $N=2$, such that
the map $U\in W^{1,p}_0(\Omega^e_{2R})\mapsto -\Delta U\in W^{-1,p}(\Omega^e_{2R})$ defines an isomorphism for every $p\in(p_{1}',p_{1})$. It implies that $\|\eta U\|_{W^{1,p}(\Omega^e_{2R})}\leq C(\Omega,R,p)\|f\|_{L^\infty}$, which implies (\ref{e:|U|W1ploc<+}).
\hfill$\Box$

{
\begin{remark}\label{r:p1*,L*(e)}
In connection with Remark \ref{r:p1*,L*} related to the interior problem, note that with our choice 
of $L=L^*=-\Delta$, we have here $p_1=p^*_1=\ol{p}_1$. Recall that $\ol{p}_1>4$ if $N=2$ and $\ol{p}_1>3$ if $N\geq 3$.
\end{remark}
}

\subsubsection{Estimates of the shape derivatives of the solution}\label{sss:hU',hU''(e)}
We have the following theorem, which is similar to Theorem \ref{th:hU',hU'',H12(i)}:
\begin{theorem}\label{th:hU',hU'',H12(e)}
{Let $\Omega$ be as in (\ref{Omega}). Assume it is also Lipschitz and simply connected.} Let $\theta\in\Theta$, $\Omega^e_\theta=(I+\theta)(\Omega^e_0)$,
$f\in L^\infty(\mathbb R^N)$ with ${\rm supp}(f)\subset B_{R}$, 
$U_\theta\in H^{1,0}_0(\Omega^e_\theta)$ the solution of (\ref{e:LU=f(e)}) (see  i) in Proposition \ref{p:LU=f(e)+}), 
$\hat{U}_\theta=U_\theta\circ(I+\theta)$.
Then
\begin{itemize}
\item[i)]
If $f\in W^{m,\infty}(\Omega^e_{2R})$, $m\in\mathbb N^*$, then $\theta\mapsto\hat{U}_\theta\in H^1_0(\Omega_0)$ is of class $C^m$ in a neighborhood of $\theta=0\in\Theta$.
\end{itemize}
{\it In the following, we assume the previous hypotheses are satisfied {for $m=2$. }}
\begin{itemize}
\item[ii)]
Furthermore, if $\Omega_0$ is Lipschitz, then for all $p\in(2,\ol{p}_1)$ where $\ol{p}_{1}$ is introduced in  Remark \ref{r:p1*,L*(e)}, we have
\begin{eqnarray}
\|\hat{U}'_0\|_{H^{1,0}(\Omega^e_0)}
&\leq&
 C _1
 \|\xi\|_{W^{1,2p/(p-2)}(\Omega^e_R)},			\label{e:|hU'|H1<(e)}\\
\left|
\int_{\Omega^e_0}
\nabla U_0\cdot\hat{U}'_0
\right|
&\leq&
 C_1 \|\xi\|_{W^{1,p/(p-2)}(\Omega^e_R)},		\label{e:|hU'phi|<(e)}\\
\left|
\int_{\Omega^e_0}
\nabla U_0\cdot\hat{U}''_0
\right|
&\leq&
C_2
\|\xi\|^2_{W^{1,2p/(p-2)}(\Omega^e_R)},		\label{e:|hU''phi|<(e)}
\end{eqnarray}
where
$\hat{U}'_0=\hat{U}_\theta'(0)(\xi)$, 
$\hat{U}''_0=\hat{U}_\theta''(0)(\xi,\xi)$,
$C_i= C_i(\Omega_0,U_0,f,R,p)$.
\end{itemize}
\end{theorem}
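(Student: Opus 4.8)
The plan is to reproduce, almost verbatim, the scheme of the proof of Theorem \ref{th:hU',hU'',H12(i)}, replacing the bounded domain $\Omega_0$ by the exterior domain $\Omega^e_0$ and the pair $H^1_0(\Omega_0),H^{-1}(\Omega_0)$ by $H^{1,0}_0(\Omega^e_0),H^{-1,0}(\Omega^e_0)$. The observation that neutralises the unboundedness is that $\theta$ (hence $\hat M_\theta-I$, see \eqref{Mtheta}) and $f$ are supported in $B_R$, so that after transporting \eqref{e:LU=f(e)} to $\Omega^e_0$ and differentiating in $\theta$, every inhomogeneous term that appears is supported in the bounded set $\Omega^e_R=\Omega^e_0\cap B_R$, on which Proposition \ref{p:LU=f(e)+} ii) gives $U_0\in W^{1,p}(\Omega^e_R)$. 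For item i), the change of variables $x=(I+\theta)(y)$ turns the weak form of \eqref{e:LU=f(e)} into $\int_{\Omega^e_0}\hat M_\theta\nabla\hat U_\theta\cdot\nabla\varphi=\int_{\Omega^e_0}\hat f_\theta\varphi$ for all $\varphi\in H^{1,0}_0(\Omega^e_0)$, with $\hat f_\theta=f\circ(I+\theta)\det[I+\nabla\theta]$; both $\hat M_\theta$ and $\hat f_\theta$ coincide with $I$, resp. $f$, outside $B_R$. Since $\theta\mapsto\hat M_\theta$ is $C^\infty$ into $(L^\infty)^{N\times N}$ and $\theta\mapsto\hat f_\theta$ is $C^m$ into $L^\infty(B_R)\subset H^{-1,0}(\Omega^e_0)$, the map $F(\theta,\hat U):\varphi\mapsto\int_{\Omega^e_0}\hat M_\theta\nabla\hat U\cdot\nabla\varphi-\hat f_\theta\varphi$ is a well-defined $C^m$ map from a neighbourhood of $(0,U_0)$ in $\Theta\times H^{1,0}_0(\Omega^e_0)$ to $H^{-1,0}(\Omega^e_0)$, with $\partial_{\hat U}F(0,U_0)=-\Delta$ an isomorphism onto $H^{-1,0}(\Omega^e_0)$ by Lax--Milgram (Proposition \ref{p:LU=f(e)+} i)). The implicit function theorem then produces a $C^m$ branch $\theta\mapsto\hat U(\theta)$, which equals $\hat U_\theta$ by uniqueness.

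For the first order estimates, I would differentiate the transported weak form once in the direction $\xi$ and set $\theta=0$ (the transported operator at $\theta=0$ is $\hat L_0=-\Delta$, which is self-adjoint), obtaining
\[
\int_{\Omega^e_0}\nabla\hat U'_0\cdot\nabla\varphi=\int_{\Omega^e_0}\hat f'_0\,\varphi-\hat M'_0\nabla U_0\cdot\nabla\varphi\qquad\text{for all }\varphi\in H^{1,0}_0(\Omega^e_0),
\]
where $\hat f'_0$ and $\hat M'_0$ are supported in $\Omega^e_R$ and satisfy $|\hat f'_0|\le C(|\xi|+|\nabla\xi|)$ (using $f\in W^{1,\infty}$) and $|\hat M'_0|\le C|\nabla\xi|$. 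Taking $\varphi=\hat U'_0$, whose $H^{1,0}_0(\Omega^e_0)$-norm equals $\|\nabla\hat U'_0\|_{L^2(\Omega^e_0)}$, and estimating the right-hand side by Hölder's inequality on the bounded set $\Omega^e_R$ (using the embedding $W^{1,2p/(p-2)}(\Omega^e_R)\hookrightarrow L^2(\Omega^e_R)$ and $\|v\|_{L^2(\Omega^e_R)}\le C\|v\|_{H^{1,0}(\Omega^e_0)}$), one divides out one power of $\|\nabla\hat U'_0\|_{L^2}$ to reach \eqref{e:|hU'|H1<(e)}. Taking instead $\varphi=U_0\in H^{1,0}_0(\Omega^e_0)$ — the right test function here, since $\partial_UK\equiv0$ and $\partial_qK=2q$, so the left-hand side becomes $2\int_{\Omega^e_0}\nabla U_0\cdot\nabla\hat U'_0$ — one obtains $\bigl|\int_{\Omega^e_0}\nabla U_0\cdot\nabla\hat U'_0\bigr|\le C\int_{\Omega^e_R}(1+|\nabla U_0|^2)(|\xi|+|\nabla\xi|)\le C\,\|\xi\|_{W^{1,p/(p-2)}(\Omega^e_R)}$, which is \eqref{e:|hU'phi|<(e)}.

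For \eqref{e:|hU''phi|<(e)}, I would differentiate once more at $\theta=0$ and isolate the terms carrying $\hat U''_0$:
\[
\int_{\Omega^e_0}\nabla\hat U''_0\cdot\nabla\varphi=\int_{\Omega^e_0}\hat f''_0\,\varphi-\hat M''_0\nabla U_0\cdot\nabla\varphi-2\hat M'_0\nabla\hat U'_0\cdot\nabla\varphi,
\]
all inhomogeneous terms still supported in $\Omega^e_R$ with $|\hat f''_0|\le C(|\xi|^2+|\nabla\xi|^2)$ and $|\hat M''_0|\le C(|\xi|^2+|\nabla\xi|^2)$. Taking $\varphi=U_0$, Hölder's inequality on $\Omega^e_R$ bounds the first two terms by $C(1+\|U_0\|^2_{W^{1,p}(\Omega^e_R)})\|\xi\|^2_{W^{1,2p/(p-2)}(\Omega^e_R)}$, and the third by $C\|U_0\|_{W^{1,p}(\Omega^e_R)}\,\|\xi\|_{W^{1,2p/(p-2)}(\Omega^e_R)}\,\|\nabla\hat U'_0\|_{L^2(\Omega^e_0)}$; inserting \eqref{e:|hU'|H1<(e)} to control $\|\nabla\hat U'_0\|_{L^2(\Omega^e_0)}$ and absorbing the (bounded) factors $\|U_0\|_{W^{1,p}(\Omega^e_R)}$ into the constant yields \eqref{e:|hU''phi|<(e)}.

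The main obstacle is really only bookkeeping: one must verify that the unbounded geometry never actually enters the estimates, which is exactly what the support conditions on $\theta$ and $f$ ensure by confining all data to the fixed bounded region $\Omega^e_R$, where the state solution $U_0$ enjoys the $W^{1,p}$-regularity of Proposition \ref{p:LU=f(e)+}. A simplification particular to this exterior model, where $L=-\Delta$ is self-adjoint and $E$ is exactly its energy, is that no auxiliary adjoint problems (in contrast to \eqref{e:phi1,U'(i)}, \eqref{e:phi2,U'(i)} in the interior case) are required: the test functions $\hat U'_0$ and $U_0$ themselves do the job directly.
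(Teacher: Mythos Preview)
Your proposal is correct and follows essentially the same approach as the paper: implicit function theorem for i), then differentiate the transported weak form and test with $\varphi=\hat U'_0$ for \eqref{e:|hU'|H1<(e)}, with $\varphi=U_0$ for \eqref{e:|hU'phi|<(e)} and \eqref{e:|hU''phi|<(e)}, exploiting throughout that the support conditions confine all inhomogeneous terms to $\Omega^e_R$ where $U_0\in W^{1,p}$. Your observation that the self-adjointness of $-\Delta$ and the specific form of $K$ make the auxiliary problems \eqref{e:phi1,U'(i)}, \eqref{e:phi2,U'(i)} unnecessary is exactly the simplification the paper uses as well.
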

{\bf Proof}.\\
{\it Step 1}.
The proof of i) is similar to the one of the same result in the interior, see i) in Theorem \ref{th:hU',hU'',H12(i)}.
For convenience we present the proof.
 
The proof is based on the implicit function theorem.
After changing the variable $x=(I+\theta)(y)$, the weak form (\ref{e:LU=f(e,w)}) in $\Omega^e_\theta$ is transformed in 
$\Omega^e_0$ as follows
\begin{equation}
 \int_{\Omega^e_0}
 \nabla\hat{U}_\theta \cdot M_\theta \cdot\nabla\varphi - \hat{f}_\theta\varphi  = 0, 				\label{e:LhU=hf,w(e)}
\end{equation}
where  $M_\theta=(T\cdot {^tT}){\rm det}[Id+\nabla\theta]$,  ${T}=[Id+\nabla\theta]^{-1}$ and $\hat{f}=f\circ(I+\xi(t)){\rm det}[I+\nabla\theta]$.
Consider $F$ defined by
\[
 \begin{array}{lclll}
 F:&\Theta\times H^{1,0}_0(\Omega^e_0)&\mapsto&H^{-1,0}(\Omega_0)\\
    &(\theta,\hat{U})&=&
\left[\varphi\mapsto \int_{\Omega_0}
 \nabla\hat{U} \cdot M_\theta \cdot\nabla\varphi - \hat{f}_\theta\varphi\right].
 \end{array}
\]
It is easy to check that $F$ is well defined and of class $C^m$ in a neighborhood of $(0,U_0)\in \Theta\times H^{1,0}_0(\Omega^e_0)$ 
(here we use the fact that $f$ is with compact support), and 
\[
 \partial_{{U}}F(0,U_0)(U):\varphi
 \mapsto
 \int_{\Omega_0}
 {(\nabla U\cdot\nabla \varphi)}dx,
 \quad\forall U, \varphi\in H^{1,0}_0(\Omega^0_0).
\]
Note that $\partial_{{U}}F(0,U_0)$ defines an isomorphism from $H^{1,0}_0(\Omega^e_0)$ to $H^{-1,0}(\Omega^e_0)$,
see Proposition \ref{p:LU=f(e)+}.
Then, from implicit function Theorem, there exists a $C^m$ map $\theta\mapsto{U}(\theta)$ such that $F(\theta,{U}(\theta)) =0$ 
for $\|\theta\|_\Theta$ small.
From the uniqueness of solution to (\ref{e:LU=f(e)}) we obtain $\hat{U}_\theta={U}(\theta)$, which proves the differentiability of $\theta\mapsto\hat{U}_\theta$.

\noindent
{\it Step 2}.
Again the proofs of  (\ref{e:|hU'|H1<(e)})-(\ref{e:|hU''phi|<(e)}) are similar to the proofs of 
(\ref{e:|hU'|H1<(i)})-(\ref{e:|hU''phi|<(i)}) (except that the choice of suitable test functions is easier here). For the proof of (\ref{e:|hU'|H1<(e)}) we differentiate (\ref{e:LhU=hf,w(e)}) and get
\begin{eqnarray}
\hspace*{-6mm}
\int_{\Omega^e_0}
(\nabla\hat{U}_\theta'\cdot M_\theta \cdot\nabla\varphi)
&=&
\int_{\Omega^e_0}
\hat{f}'_\theta \varphi - \nabla\hat{U}_\theta\cdot M'_\theta\cdot \nabla\varphi.			\label{e:(LU-f)'=0(e)}
\end{eqnarray}
With $\theta=0$ and $\varphi=\hat{U}'_0$, as ${\rm Supp}(\xi)\subset B(0,R)$, from (\ref{e:(LU-f)'=0(e)}) we get
\begin{eqnarray}
\hspace*{-6mm}
\|\nabla\hat{U}'_0\|_{L^2(\Omega^e_0)}^2
&\leq&
C_1
\int_{\Omega^e_R}
(1+|\nabla U_0|)(|\xi|+|\nabla\xi|)(|\hat{U}'_0|+|\nabla\hat{U}'_0|)			\nonumber\\
&\leq&
C_1
(1+\|U_0\|_{W^{1,p}(\Omega^e_R)})
\|\xi\|_{W^{1,2p/(p-2)}(\Omega^e_R)}
\|\nabla \hat{U}'_0\|_{L^2(\Omega^e_0)},						\label{e:hU',H12(e-1)}
\end{eqnarray}
with $C_1=C_1(\Omega_0,R,f,p)$, which proves  (\ref{e:|hU'|H1<(e)}).

Similarly, if we take $\theta=0$ and $\varphi=U_0$ in (\ref{e:(LU-f)'=0(e)}) we get
\begin{eqnarray*}
\left|
\int_{\Omega^e_0}
\nabla\hat{U}'_0\cdot\nabla U_0
\right|
&\leq&
C_1
\int_{\Omega^e_R}
(1+|\nabla U_0|)(|\xi|+|\nabla\xi|)|\nabla U_0|				\nonumber\\
&\leq&
C_1(1+\|U_0\|^2_{W^{1,p}(\Omega^e_R)})\|\xi\|_{W^{1,p/(p-2)}(\Omega^e_R)},		
\end{eqnarray*}
with $C_1=C_1(\Omega_0,f,R,p)$, which proves (\ref{e:|hU'phi|<(e)}).

\noindent
{\it Step 3}.
For the proof of (\ref{e:|hU''phi|<(e)}) we differentiate (\ref{e:(LU-f)'=0(e)}) at $\theta=0$ and take ${\varphi}=U_0$.
It gives
\begin{eqnarray*}
\left|
\int_{\Omega^e_0}
\nabla\hat{U}''_0\cdot\nabla U_0
\right|
&\leq&
C_2
\int_{\Omega^e_R}
(1+|\nabla{U}_0|^2)(|\xi|^2+|\nabla\xi|^2)	
+
|\nabla{U}_0||\nabla\xi||\nabla\hat{U}'_0|		\nonumber\\
&\leq&
C_2
\Big(
\|U_0\|_{W^{1,p}(\Omega^e_R)}\|\nabla\xi\|_{L^{2p/(p-2)}(\Omega^e_R)}\|\nabla\hat{U}_0'\|_{L^2(\Omega^e_R)}
+ 
\nonumber\\
&&
\hspace*{6mm}
(1+\|U_0\|^2_{W^{1,p}(\Omega^e_R)})\|\xi\|^2_{W^{1,2p/(p-2)}(\Omega^e_R)}
\Big)	
\nonumber\\
&\leq&
C_2(1+\|U_0\|^2_{W^{1,p}(\Omega^e_R)}\|\xi\|^2_{W^{1,2p/(p-2)}(\Omega^e_R)},		
\end{eqnarray*}
with $C_2=C_2(\Omega_0,f,R,p)$, which completes the proof.
\hfill$\Box$

\subsubsection{Estimate of shape derivatives of the energy}
\begin{theorem}\label{th:e',e''(e)}
{Let $\Omega$ be as in (\ref{Omega}). Assume it is also Lipschitz and simply connected.} Let 
$\theta\in\Theta$, $\Omega^e_\theta=(I+\theta)(\Omega^e_0)$,
$f\in W^{m,\infty}(\mathbb R^N)$, $m\in\N^*$, with  ${\rm supp}(f)\Subset B_{R}$, $U_\theta\in H^{1,0}_0(\Omega^e_\theta)$ the solution of (\ref{e:LU=f(e,w)}) and $\E(\theta)=\int_{\Omega_\theta}|\nabla U_\theta|^2$. 
Then $\E$ is of class $C^m$ near $\theta=0\in\Theta$ and for $m\geq 2$ and for all $p\in (2,\ol{p}_1)$ with $\ol{p}_1$ defined in Remark \ref{r:p1*,L*(e)}, we have
\begin{eqnarray}
 |\E'(0)(\xi)|
 &\leq& 
 C_1
\|\xi\|_{W^{1,p/(p-2)}(\Omega^e_R)},			\label{e:|e'(e)|<,Lip}
\\
 |\E''(0)(\xi,\xi)|
 &\leq& 
 C_2
\|\xi\|^2_{W^{1,2p/(p-2)}(\Omega^e_R)},				\label{e:|e''(e)|<,Lip}
\end{eqnarray}
where $C_i=C_i(\Omega_0,U_0,f,R,p)$.
\end{theorem}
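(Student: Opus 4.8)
The proof will follow the pattern of Theorem \ref{th:|e'|+|e''|(i)} for the interior case, now feeding in the material-derivative estimates of Theorem \ref{th:hU',hU'',H12(e)} together with the regularity $U_0\in W^{1,p}_0(\Omega^e_0)$ and the bound \eqref{e:|U|W1ploc<+}. The first step is to transport the energy to the fixed reference domain by the change of variables $x=(I+\theta)(y)$, which (as in \eqref{e:LhU=hf,w(e)}) gives
\[
\E(\theta)=\int_{\Omega^e_0}\nabla\hat{U}_\theta\cdot M_\theta\cdot\nabla\hat{U}_\theta,\qquad M_\theta=(T\,{}^tT)\det[Id+\nabla\theta],\quad T=[Id+\nabla\theta]^{-1},
\]
so that $M_0=Id$ and $M_\theta-Id$ is supported in $B_R$. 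Splitting $\E(\theta)=\|\nabla\hat{U}_\theta\|^2_{L^2(\Omega^e_0)}+\int_{\Omega^e_0}\nabla\hat{U}_\theta\cdot(M_\theta-Id)\cdot\nabla\hat{U}_\theta$, the first summand is of class $C^m$ near $\theta=0$ by composing the $C^m$ map $\theta\mapsto\hat{U}_\theta\in H^{1,0}_0(\Omega^e_0)$ (part i) of Theorem \ref{th:hU',hU'',H12(e)}) with the quadratic map $v\mapsto\|\nabla v\|^2_{L^2}$, using the equivalence of the $H^{1,0}_0$-norm with $\|\nabla\cdot\|_{L^2}$; the second summand is $C^m$ because $\theta\mapsto M_\theta$ is $C^\infty$ into $(L^\infty(B_R))^{N\times N}$ and the integrand is trilinear continuous. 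Hence $\E\in C^m$ near $0$.

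Next, fix a direction $\xi\in\Theta$ and differentiate. Since $M_\theta$ is symmetric, at $\theta=0$ one gets
\[
\E'(0)(\xi)=2\int_{\Omega^e_0}\nabla\hat{U}'_0\cdot\nabla U_0+\int_{\Omega^e_0}\nabla U_0\cdot M_0'\cdot\nabla U_0.
\]
The first term is bounded by $C_1\|\xi\|_{W^{1,p/(p-2)}(\Omega^e_R)}$ directly by \eqref{e:|hU'phi|<(e)} (read with $\nabla\hat{U}'_0$). In the second term $M_0'$ is supported in $B_R$ with $|M_0'|\le C|\nabla\xi|$ pointwise, so by H\"older's inequality with exponents $p,p,\tfrac{p}{p-2}$ and by \eqref{e:|U|W1ploc<+},
\[
\Big|\int_{\Omega^e_R}\nabla U_0\cdot M_0'\cdot\nabla U_0\Big|\le C\int_{\Omega^e_R}|\nabla U_0|^2|\nabla\xi|\le C\,\|U_0\|^2_{W^{1,p}(\Omega^e_R)}\,\|\xi\|_{W^{1,p/(p-2)}(\Omega^e_R)},
\]
which gives \eqref{e:|e'(e)|<,Lip} (this part uses only $m\ge1$).

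Differentiating once more at $\theta=0$ produces, besides $2\int_{\Omega^e_0}\nabla\hat{U}''_0\cdot\nabla U_0$, the terms $2\int_{\Omega^e_0}|\nabla\hat{U}'_0|^2$, a term of the form $\int_{\Omega^e_0}\nabla\hat{U}'_0\cdot M_0'\cdot\nabla U_0$ (up to a numerical factor), and $\int_{\Omega^e_0}\nabla U_0\cdot M_0''\cdot\nabla U_0$ with $|M_0''|\le C|\nabla\xi|^2$; the latter two integrands are again supported in $B_R$. Now $|\int_{\Omega^e_0}\nabla\hat{U}''_0\cdot\nabla U_0|\le C_2\|\xi\|^2_{W^{1,2p/(p-2)}(\Omega^e_R)}$ by \eqref{e:|hU''phi|<(e)}; $\int_{\Omega^e_0}|\nabla\hat{U}'_0|^2=\|\nabla\hat{U}'_0\|^2_{L^2(\Omega^e_0)}$ is controlled by \eqref{e:|hU'|H1<(e)}; the cross term is bounded by H\"older with exponents $2,\tfrac{2p}{p-2},p$ followed by \eqref{e:|hU'|H1<(e)} and \eqref{e:|U|W1ploc<+}; and the last one by H\"older with exponents $p,p,\tfrac{p}{p-2}$ and \eqref{e:|U|W1ploc<+}, giving $\le C\|U_0\|^2_{W^{1,p}(\Omega^e_R)}\|\xi\|^2_{W^{1,2p/(p-2)}(\Omega^e_R)}$. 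Summing these yields \eqref{e:|e''(e)|<,Lip}.

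The only point genuinely different from the interior case is the unboundedness of $\Omega^e_0$. One must keep the single ``global'' term $\|\nabla\hat{U}'_0\|^2_{L^2(\Omega^e_0)}$ (handled by the global estimate \eqref{e:|hU'|H1<(e)}) and the ``global'' pairing $\int_{\Omega^e_0}\nabla\hat{U}'_0\cdot\nabla U_0$ (handled by \eqref{e:|hU'phi|<(e)}) apart from every term carrying a factor $M_\theta'$ or $M_\theta''$; the latter are automatically localized in $B_R$ because $\Supp\xi\subset B_R$, and once restricted to $\Omega^e_R$, H\"older's inequality applies exactly as in the interior proof, using $U_0\in W^{1,p}(\Omega^e_R)$ from Proposition \ref{p:LU=f(e)+}. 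I expect this localization bookkeeping, together with checking $C^m$-differentiability of $\E$ despite the non-compactly-supported integrand, to be the (mild) main obstacle; the remaining inequality-chasing is identical to that of Theorem \ref{th:|e'|+|e''|(i)}.
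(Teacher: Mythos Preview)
Your proof is correct and follows essentially the same route as the paper's own argument: transport the energy to $\Omega^e_0$, differentiate, and feed in the three estimates \eqref{e:|hU'|H1<(e)}--\eqref{e:|hU''phi|<(e)} together with the local regularity \eqref{e:|U|W1ploc<+}. Your treatment is actually slightly more careful in two places: you justify the $C^m$-smoothness of $\E$ by splitting off the compactly supported perturbation $M_\theta-Id$ (the paper just invokes Theorem \ref{th:hU',hU'',H12(e)} without comment), and you track the cross term $\int\nabla\hat U'_0\cdot M_0'\cdot\nabla U_0$ explicitly, whereas the paper absorbs it into $\int|\nabla\hat U'_0|^2+\int|\nabla U_0|^2|\nabla\xi|^2$ via Young's inequality.
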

{\bf Proof}.
The proof of the theorem is very similar to the one of Theorem \ref{th:hU',hU'',H12(i)}. We present it briefly.
Note that
\begin{eqnarray}
\E(\theta)
&=&
\int_{\Omega^e_0}
\nabla\hat{U}_\theta\cdot M_\theta\cdot\nabla\hat{U}_\theta.			\label{e:e(th)(e)}
\end{eqnarray}
From Theorem \ref{th:hU',hU'',H12(e)} it follows that $\E$ is $C^m$ in a neighborhood of $\theta=0$. 

Differentiating (\ref{e:e(th)(e)}) at $\theta=0$ in a direction $\xi\in\Theta$, and then using (\ref{e:|hU'phi|<(e)}) gives
\begin{eqnarray}
|\E'(0)(\xi)|
&\leq&
C_1
\left(
\left|\int_{\Omega^e_0}\nabla U_0\cdot\nabla \hat{U}_0'
\right|
+
\int_{\Omega^e_R}
|\nabla U_0|^2|\nabla\xi|	
\right)															\nonumber\\
&\leq&
C_1
\|\xi\|_{W^{1,p/(p-2)}(\Omega^e_R)},												\label{e:|e'(e)|<}
\end{eqnarray}
with $C_1=C_1(\Omega_0,U_0,R,p,f)$, which  proves (\ref{e:|e'(e)|<,Lip}).

Differentiating  twice (\ref{e:e(th)(e)}) at $\theta=0$ in the direction $\xi$, and then using 
(\ref{e:|hU'|H1<(e)}) and (\ref{e:|hU''phi|<(e)}) gives 
\begin{eqnarray}
|\E''(0)(\xi,\xi)|
&\leq&
C_2
\left|
\int_{\Omega^e_0}\nabla\hat{U}''_0\cdot\nabla U_0
\right|
+
\int_{\Omega^e_0}
|\nabla\hat{U}'_0|^2
+
|\nabla U_0|^2(|\xi|^2+|\nabla\xi|^2)			\nonumber\\
&\leq&
C_2
\|\xi\|^2_{W^{1,2p/(p-2)}(\Omega^e_0)},
\end{eqnarray}
with $C_2=C_2(\Omega_0,U_0,R,p,f)$, which completes the proof.
\hfill$\Box$

\section{Application to optimal convex shapes}\label{sect:app}

In this section, we remind the strategy from \cite{LNP}, and emphasize an application of the second order shape derivatives estimates obtained in Section \ref{ss:mainresults}, first in dimension two, then in higher dimensions.

\subsection{Application in the planar case}\label{ssect:planar}

Here is the main result of this subsection:
\begin{theorem}\label{th:polygon}
Let $\Omega_0$ an open convex subset of $\R^2$, solution of the optimization problem:
\begin{equation}\label{e:minJbis}
\hspace*{-8mm}
\min\Big\{J(\Omega)=R(E(\Omega),|\Omega|)-P(\Omega),\;\; \Omega\subset\R^2\textrm{open, convex and such that }\partial\Om\subset\{x, a\leq |x|\leq b\}\Big\},
\end{equation}
where $R:\R^2\to\R$ is smooth, $(a,b)\in(0,\infty]^2$, $a<b$, and $E$ is an energy like (\ref{e:E(i)}), corresponding to the interior problem,  or like (\ref{e:E(e)}), corresponding to the exterior problem.

Then every connected component of $(\partial\Om_{0})_{in}:=\partial\Om_{0}\cap\{x, a< |x|< b\}$ is a finite union of straight segments.
\end{theorem}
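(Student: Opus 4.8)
The plan is to run the strategy of \cite{LN,LNP}; the only genuinely new ingredient is the shape derivative estimate of Section \ref{ss:mainresults}, which in dimension $N=2$ replaces the ad hoc planar estimates used there. Since a bounded convex domain is semi-convex in the sense of Definition \ref{def1}, Theorem \ref{th:e',e''(i,intro)} together with Corollary \ref{c:e',e''(i,e,intro)} (in the interior case \eqref{e:E(i)}), respectively Theorem \ref{th:e',e''(e,intro)} together with Corollary \ref{c:e',e''(i,e,intro)} (in the exterior case \eqref{e:E(e)}), provide an exponent $s<1$ and constants $C_1,C_2$ with $|E'(\Omega_0)(\xi)|\le C_1\|\xi\|_{H^{s}(\partial\Omega_0)}$ and $|E''(\Omega_0)(\xi,\xi)|\le C_2\|\xi\|^2_{H^{s}(\partial\Omega_0)}$ for all $\xi\in\Theta$ --- that is, $E'$ and $E''$ are continuous for a Sobolev norm \emph{strictly weaker than} $H^1(\partial\Omega_0)$. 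This is exactly the hypothesis under which \cite{LNP} obtains polygonality, so the remaining argument is a transcription of theirs.

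First I would record the optimality conditions. Because the admissible class is stable under the convex perturbations used below, minimality of $\Omega_0$ gives: for every $\xi=v\,\nu\in\Theta$ (normal on $\partial\Omega_0$) such that the domains obtained from $\Omega_0$ by the flows of $\pm t\,\xi$ stay convex with boundary in $\{a\le|x|\le b\}$ for all small $t>0$, either $J'(\Omega_0)(\xi)\neq 0$ --- and then minimality already fails at first order, since $\pm\xi$ are both admissible --- or $J'(\Omega_0)(\xi)=0$ and then $J''(\Omega_0)(\xi,\xi)\ge 0$. Now assume, for contradiction, that some connected component $\Gamma$ of $(\partial\Omega_0)_{in}$ is not a finite union of straight segments; equivalently, the restriction to $\Gamma$ of the curvature measure of $\partial\Omega_0$ is not a finite sum of Dirac masses. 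As in \cite{LNP}, one then produces a base point $x_\ast\in\Gamma\subset\{a<|x|<b\}$ and, for a sequence $\delta\to 0$, admissible perturbations $v_\delta\nu$ supported in a neighbourhood of $x_\ast$ of size $O(\delta)$, oscillating at frequency $\delta^{-1}$, with amplitude $A_\delta$ calibrated to the local mass of the curvature measure so that convexity is preserved under the flows of $\pm t\,v_\delta\nu$ for some $t>0$; here the annulus constraint is inactive near $x_\ast$, and by construction $\|\nabla_\tau v_\delta\|_{L^2(\partial\Omega_0)}^2$ is much larger than $\|v_\delta\|_{H^{s}(\partial\Omega_0)}^2$ for every $s<1$.

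Then I would evaluate $J''(\Omega_0)(v_\delta\nu,v_\delta\nu)$. Expanding $J(\Omega)=R(E(\Omega),|\Omega|)-P(\Omega)$ by the chain rule, it equals $-P''(\Omega_0)(v_\delta\nu,v_\delta\nu)$ plus the terms $R_E\,E''(\Omega_0)(v_\delta\nu,v_\delta\nu)$, $R_A\,|\Omega|''(\Omega_0)(v_\delta\nu,v_\delta\nu)$, $R_{EE}\big(E'(\Omega_0)(v_\delta\nu)\big)^2$, $2R_{EA}\big(E'(\Omega_0)(v_\delta\nu)\big)\big(|\Omega|'(\Omega_0)(v_\delta\nu)\big)$ and $R_{AA}\big(|\Omega|'(\Omega_0)(v_\delta\nu)\big)^2$, the partials of $R$ being evaluated at $(E(\Omega_0),|\Omega_0|)$. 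On a strictly convex piece of the boundary --- which is what the calibration of $v_\delta$ exploits --- one has $-P''(\Omega_0)(v_\delta\nu,v_\delta\nu)=-\int_{\partial\Omega_0}|\nabla_\tau v_\delta|^2+O(\|v_\delta\|_{L^2}^2)$; meanwhile $E''(\Omega_0)(v_\delta\nu,v_\delta\nu)=O(\|v_\delta\|_{H^{s}}^2)$ with $s<1$ by the estimates above, $|\Omega|''(\Omega_0)(v_\delta\nu,v_\delta\nu)=\int_{\partial\Omega_0}\kappa_0\,v_\delta^2=O(\|v_\delta\|_{L^\infty}^2)$, $\big(E'(\Omega_0)(v_\delta\nu)\big)^2=O(\|v_\delta\|_{H^{s}}^2)$ and $\big(|\Omega|'(\Omega_0)(v_\delta\nu)\big)^2=\big(\int_{\partial\Omega_0}v_\delta\big)^2=O(\|v_\delta\|_{L^1}^2)$. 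By the construction of $v_\delta$ the negative term $-\int_{\partial\Omega_0}|\nabla_\tau v_\delta|^2$ is of strictly larger order than each of these as $\delta\to0$, so $J''(\Omega_0)(v_\delta\nu,v_\delta\nu)\to-\infty$, contradicting $J''(\Omega_0)(v_\delta\nu,v_\delta\nu)\ge 0$; hence every connected component of $(\partial\Omega_0)_{in}$ is a finite union of segments.

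The hard part is not the energy estimate --- that is precisely what Section \ref{ss:mainresults} now supplies --- but the construction and admissibility check of the perturbations $v_\delta$ when the curvature measure of $\Gamma$ is pathological (purely singular, or with infinitely many accumulating atoms): one must select $x_\ast$ and the scales $\delta$ so that the curvature mass of an $O(\delta)$-arc around $x_\ast$ does not decay faster than a fixed power of $\delta$, so that $v_\delta$ can be taken convexity-preserving under $\pm t\,v_\delta\nu$ while its amplitude $A_\delta$ still makes $\|\nabla_\tau v_\delta\|_{L^2}^2$ dominate all the $H^{s}$-controlled terms; one must also guarantee that $v_\delta\nu$ lies in the critical cone (or conclude directly from $J'(\Omega_0)(v_\delta\nu)\ne 0$), and handle the geometry near the interfaces $\{|x|=a\}$ and $\{|x|=b\}$. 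All of this is carried out in \cite{LNP}; with the estimates of Section \ref{ss:mainresults} available --- here for $N=2$, for both the interior and the exterior energies --- the conclusion follows.
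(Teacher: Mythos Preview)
Your overall strategy is correct and in the same spirit as the paper's: the decisive new input is indeed the $H^s$-continuity of $E'(\Omega_0)$ and $E''(\Omega_0)$ for some $s<1$ (supplied by Corollary~\ref{c:e',e''(i,e,intro)} in both the interior and exterior cases), and once this is available the polygonality conclusion is inherited from the machinery of \cite{LNP}.

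However, the paper does \emph{not} implement the argument directly on $\partial\Omega_0$ with normal perturbations $v\nu$ as you do; instead it passes through the gauge function parametrization \eqref{e:Omega_u} and reduces everything to the functional $j(u)=J(\Omega_u)$ on $W^{1,\infty}(\T)$. This buys two concrete things. First, the convexity constraint becomes the \emph{linear} condition $u''+u\ge 0$, so that the admissibility/``critical cone'' analysis and the construction of oscillating perturbations (your $v_\delta$) are exactly the content of the black-box Theorem~\ref{th:Uad->pol}, which is quoted rather than redone. Second, the perturbing vector fields are \emph{radial}, $\xi'(0)=-(v/u^2)e^{i\theta}$ and $\xi''(0)=2(v^2/u^3)e^{i\theta}$ (see \eqref{e:xi'(u),xi''(u)}), which lie in $\Theta=W^{1,\infty}$ whenever $v\in W^{1,\infty}(\T)$; by contrast, on a merely Lipschitz (convex) boundary the unit normal $\nu$ is only $L^\infty$, so your $v_\delta\nu$ need not extend to an element of $\Theta$, and the formulas you write for $P''$ and $|\Omega|''$ in terms of $\kappa$ (a measure here) require care. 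The paper's remaining work is then purely a change-of-variables: $e''(u_0)(v,v)=\E''(0)(\xi'(0),\xi'(0))+\E'(0)(\xi''(0))$, followed by the equivalence of $H^{1-\eps}(\partial\Omega_0)$ and $H^{1-\eps}(\T)$ norms and the Banach algebra property of $H^{1-\eps}(\T)$ to absorb the nonlinear dependence of $\xi$ on $v$, yielding $|e''(u_0)(v,v)|\le C\|v\|_{H^{1-\eps}(\T)}^2$ and hence the hypothesis \eqref{e:coercivite} of Theorem~\ref{th:Uad->pol}.

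So: your sketch is morally right and defers to \cite{LNP} at the correct place, but what \cite{LNP} actually proves (and what the paper invokes) is the statement in the $u$-parametrization, not a direct statement on $\partial\Omega_0$. If you want your version to stand on its own, you would need to either redo the perturbation construction of \cite{LNP} on $\partial\Omega_0$ (handling the measure-valued curvature and the regularity of $\nu$), or---simpler---switch to the gauge function and quote Theorem~\ref{th:Uad->pol} as the paper does.
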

We insist here on the fact that the existence of an optimal shape for problem (\ref{e:minJbis}) is true and easy to obtain in the case $0<a<b<\infty$, see Remark \ref{rk:existence}. The cases $b=\infty$ and/or $a=0$ require more attention: it may happens that minimizing sequences are not bounded (for example if $R=0$), or that they converge to a segment.

\begin{remark}\label{rk:dir}
This result was obtained in \cite{LNP} for the two following particular cases:
\begin{itemize}
\item $E=E_{f}$ is the Dirichlet energy associated to $f\in H^2_{loc}(\R^2)$:
 $$E(\Om)=\int_{\Om}\left(\frac{1}{2}|\nabla U_{\Om}|^2-fU_{\Om}\right), \;\;\textrm{where}\;\;U_{\Om}\;\;\textrm{solves}\;\;\left\{\begin{array}{l}
 -\Delta U_{\Om}=f\;\textrm{in}\;\Om\\
 U_{\Om}\in H^1_{0}(\Om)
 \end{array}\right.$$
\item $E=\lambda_{1}$ is the first Dirichlet eigenvalue of $\Om$:
$$\lambda_{1}(\Om)=\min\left\{\frac{\int_{\Om}|\nabla U|^2}{\int_{\Om} U^2}, \;\;U\in H^1_{0}(\Om)\setminus\{0\}\right\}.$$
\end{itemize}
\end{remark}

{\begin{remark} In Theorem \ref{th:polygon},  we chose a simple function of $P(\Omega)$. Actually, the same result would extend to functionals of the form $R\big(E(\Omega), |\Omega|,\lambda_1(\Omega), P(\Omega)\big)$ with $p\to R(\cdot,\cdot,\cdot,p)$ being decreasing and concave.
\end{remark}
 
\begin{remark}\label{rk:constraint}
Similarly to \cite[Theorem 4, Example 8]{LNP}, Theorem \ref{th:polygon} is also valid with a volume constraint. More precisely, any solution of
\begin{equation}
\min\{J(\Omega)=R(E(\Omega))-P(\Omega),\;\; \Omega\subset\R^2\textrm{ open, convex and such that }|\Om|=V_{0}\}
\end{equation}
(where $R:\R\to\R$ is smooth and $E$ is as in Theorem \ref{th:polygon}) is a polygon. {See also Remark \ref{rk:constraint2} for other geometrical constraints.}
\end{remark}

In order to analyze problem \eqref{e:minJbis}, we use, as in \cite{LN, LNP},  the following classical parametrization of 2-dimensional convex with polar coordinates 
$(r,\theta)\in[0,\infty)\times\T$, where $\T=\R/2\pi\Z$:
\begin{equation}\label{e:Omega_u}
\Om_u:=\left\{(r,\theta)\in [0,\infty)\times\R\;;\;r<\frac{1}{u(\theta)}\right\},
\end{equation}
where $u$ is a positive  and $2\pi$-periodic function.
A simple computation shows that the curvature of $\Om_u$ is 
\begin{equation}\label{eq:curvature}
\kappa_{\partial\Om_{u}}=\frac{u''+u}{\left(1+\left(\frac{u'}{u}\right)^2\right)^{3/2}}.
\end{equation}
 This implies that $\Om_{u}$ is convex if and only if $u''+u\geq0$, which has to be understood in the sense 
 {of $H^{-1}(\T)$} if $u$ is not $C^2$. More precisely, if {$u\in H^1(\T)$} then $u''+u\geq 0$ if and only if
\begin{equation*}
\forall\; {v\in H^1(\T)}\;\; \mbox{ with }v\geq0,\;\;\int_{\T} \left(uv - u'v'\right)d\theta\geq0.
\end{equation*}
Throughout this section, any function defined on $\T$ is considered as the restriction to $\T$ of a $2\pi$-periodic function on $\R$, with the same regularity.

With this parametrization, considering $j(u)=J(\Om_{u})$, Problem \eqref{e:minJbis} is equivalent to
\begin{equation}\label{e:minJu}
\min\left\{j(u),\; u''+u\geq0,\; u\in\mathcal U_{ad}\right\},\quad
\;\textrm{where}\;\; \mathcal U_{ad}=\{u\in W^{1,\infty}(\T),\;\, 1/u\in[a,b]\}.
\end{equation}

Then we have the following  result proven in \cite[Theorem 3]{LNP}.
\begin{theorem}\label{th:Uad->pol}
Let $u_{0}>0$ be a solution for \eqref{e:minJu} and
${\displaystyle \T_{in}:=\left\{\theta\in\T, {a<\frac{1}{u_{0}(\theta)}<b}\right\}}$.
Assume $j:W^{1,\infty}(\T)\to\R$ is $C^2$ and that  there exist  $s\in[0,1)$, $\alpha>0$, $\beta,\gamma\in\R$ such that, for any
$v\in W^{1,\infty}(\T)$,
 we have
\begin{eqnarray}
 j''(u_{0})(v,v)
 \leq
 -\alpha|v|^2_{H^1(\mathbb T)} + \gamma |v|_{H^1(\mathbb T)}\|v\|_{H^{s}(\mathbb T)}+\beta\|v\|^2_{H^{s}(\mathbb T)}.
 \label{e:coercivite}
\end{eqnarray}
If $I$ is a connected component of $\T_{in}$, then
$$
u_{0}''+u_{0}\textrm{ is a finite sum of Dirac masses in }I.
$$

\end{theorem}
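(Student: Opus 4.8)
The plan is to argue by contradiction, extracting a second order optimality inequality from the fact that $u_0$ minimizes $j$ and clashing it with the anti-coercivity hypothesis \eqref{e:coercivite}. Fix a connected component $I$ of $\T_{in}$ and put $\mu:=u_0''+u_0$, which is a nonnegative measure on $\T$ because $\Om_{u_0}$ is convex. Suppose, for contradiction, that $\mu$ is not a finite sum of Dirac masses in $I$. Then $\Supp(\mu)\cap I$ is infinite, so it has an accumulation point $x_*\in\overline I$; moreover, in every neighbourhood of $x_*$ there are three nonzero nonnegative measures $\mu_1,\mu_2,\mu_3$ with pairwise disjoint supports contained in $I$ and with $\mu_1+\mu_2+\mu_3\le\mu$: if $\mu\lfloor I$ has infinitely many atoms I would take three distinct atoms of $\mu$ close to $x_*$, and otherwise (the non-atomic part of $\mu\lfloor I$ being then nonzero) I would take the restrictions of that non-atomic part to three disjoint subintervals near $x_*$. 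The idea is to use these ``lumps'' of $\mu$ to build a sequence of admissible deformations concentrated near $x_*$ that contradicts \eqref{e:coercivite}.

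Concretely, for each $n$ I would pick such lumps $\mu_1^n,\mu_2^n,\mu_3^n$ with supports inside $(x_*-1/n,x_*+1/n)\cap I$ and solve
\[
v_n''+v_n=\nu_n:=c_1^n\mu_1^n+c_2^n\mu_2^n+c_3^n\mu_3^n ,
\]
where $(c_1^n,c_2^n,c_3^n)\ne0$ is chosen to satisfy the two scalar conditions $\int\cos\theta\,d\nu_n=\int\sin\theta\,d\nu_n=0$ (orthogonality to the kernel of $v\mapsto v''+v$) --- a homogeneous linear system of rank at most $2$ in $3$ unknowns, hence solvable nontrivially --- and rescaled so that $\max_i|c_i^n|=1$, so that $|\nu_n|\le\mu_1^n+\mu_2^n+\mu_3^n\le\mu$ and $\nu_n\ne0$. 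For $v_n$ I would take the retarded solution $v_n(\theta)=\int_{\{\phi<\theta\}}\sin(\theta-\phi)\,d\nu_n(\phi)$; the two orthogonality conditions are precisely what forces $v_n$, which vanishes on one side of $\Supp(\nu_n)$ by construction, to vanish on the other side as well, so $v_n$ is a nonzero Lipschitz function supported in the smallest arc $A_n\Subset I$ containing $\Supp(\nu_n)$, and $|A_n|\le 2/n$. For $|t|$ small the curve $u_0+tv_n$ is admissible for \eqref{e:minJu}: convexity holds because $\mu\pm t\nu_n\ge(1-|t|)\mu\ge0$, and the constraint $1/u\in[a,b]$ stays inactive because $1/u_0$ is bounded away from $a$ and $b$ on the compact set $A_n\subset I$ while $v_n$ is bounded. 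Hence $t\mapsto j(u_0+tv_n)$ is $C^2$ near $t=0$ (as $j$ is $C^2$ on $W^{1,\infty}(\T)$ and the curve is affine) and, $u_0$ being a minimizer, has an interior local minimum at $t=0$; therefore $j''(u_0)(v_n,v_n)\ge0$.

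Next I would estimate the norms of $v_n$. After rescaling $v_n$ so that $|v_n|_{H^1(\T)}=1$ (which only shrinks the interval of admissibility of the curve), the facts that $v_n$ vanishes at the endpoints of $A_n$ and $|A_n|\le2/n$ give, by Cauchy--Schwarz, $\|v_n\|_{L^\infty(\T)}\le|A_n|^{1/2}$ and then $\|v_n\|_{L^2(\T)}\le|A_n|$; interpolating between $L^2$ and $H^1$, $\|v_n\|_{H^{s}(\T)}\le C\|v_n\|_{L^2(\T)}^{1-s}\|v_n\|_{H^1(\T)}^{s}\le C|A_n|^{\,1-s}\to0$ as $n\to\infty$, since $s<1$. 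Combining this with $j''(u_0)(v_n,v_n)\ge0$, with $|v_n|_{H^1(\T)}=1$, and with \eqref{e:coercivite} gives
\[
0\ \le\ j''(u_0)(v_n,v_n)\ \le\ -\alpha+\gamma\|v_n\|_{H^{s}(\T)}+\beta\|v_n\|_{H^{s}(\T)}^2 ,
\]
and the right-hand side tends to $-\alpha<0$ as $n\to\infty$, which is absurd. Hence $u_0''+u_0$ must be a finite sum of Dirac masses in $I$.

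The part I expect to require the most care is the construction of the deformations $v_n$: one must treat separately the case where $\mu\lfloor I$ has infinitely many atoms and the case where it has a nontrivial non-atomic part, exhibit in each the three mutually singular lumps dominated by $\mu$, check that imposing the two orthogonality conditions still leaves a genuinely nonzero perturbation, and verify simultaneously that $v_n$ is compactly supported in $I$, that the normalization $|v_n|_{H^1}=1$ is compatible with $\|v_n\|_{H^s}\to0$ (where $|A_n|\to0$ and $s<1$ enter), and that $u_0+tv_n$ stays admissible for $t$ in some nonempty interval. The remaining ingredients --- differentiating the one-parameter family, the interpolation inequality and the final comparison --- are routine.
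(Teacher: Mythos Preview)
The paper does not prove this theorem; it simply quotes it as \cite[Theorem 3]{LNP}. Your argument is essentially the one in \cite{LN,LNP}: build compactly supported admissible perturbations $v_n$ by solving $v_n''+v_n=\nu_n$ with $|\nu_n|\le\mu$ and $\nu_n\perp\{\cos,\sin\}$, use the second-order necessary condition $j''(u_0)(v_n,v_n)\ge 0$, and contradict \eqref{e:coercivite} through the Poincar\'e-type estimate $\|v_n\|_{L^2}\lesssim |A_n|\,|v_n|_{H^1}$ together with $H^s$-interpolation. The three-lump trick to kill the two-dimensional kernel of $v\mapsto v''+v$ while keeping $|\nu_n|\le\mu$ is exactly the device used there.

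Two small points worth tightening. First, you allow $x_*\in\overline I$, but in the non-atomic case you should pick $x_*\in\Supp(\mu_c)\cap I$ (which is nonempty since $\mu_c(I)>0$), so that $(x_*-1/n,x_*+1/n)\Subset I$ for large $n$ and hence $A_n\Subset I$; in the atomic case the three atoms already lie in $I$, whence their convex hull $A_n$ is automatically compact in $I$. Second, your later normalization $|v_n|_{H^1}=1$ is legitimate precisely because $v_n\not\equiv0$ and is compactly supported (so $v_n'\equiv0$ would force $v_n\equiv0$); it only shrinks the admissible $t$-interval to $|t|\le |v_n|_{H^1}^{-1}\cdot 1$, which is still open, so $j''(u_0)(v_n,v_n)\ge0$ survives. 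With these clarifications your proof is complete and matches the cited reference.
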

This result, combined with the estimates from Section \ref{ss:mainresults} will lead to a proof of Theorem \ref{th:polygon}. Indeed, formula \eqref{eq:curvature} explains that $\Om_{u}$ is polygonal if and only if $u''+u$ is a sum of Dirac masses.

\begin{remark}\label{rk:constraint2}
As in \cite{LNP}, we may focus on a geometrical constraint different from $\partial\Om\subset\{x, a\leq |x| \leq b\}$. In that case, the previous results remain valid when replacing the definitions of $\T_{in}, (\partial\Om_{0})_{in}$ with:
\begin{equation*}
\T_{in}
=\{\theta\in\T\; /\; \exists \eps,\delta>0,  \forall v\in W^{1,\infty}_{0}(\theta-\eps,\theta+\eps)\textrm{ with }
\|v\|_{W^{1,\infty}}<\delta, u_0+v\in \mathcal{U}_{ad}\}.  
\end{equation*}
\begin{equation*}
(\partial\Omega_0)_{in}
=\left\{x\in\partial\Omega_0 \;/\; \exists \theta\in \T_{in}, x=\frac{1}{u_{0}(\theta)}(\cos\theta,\sin\theta)\right\}.
\end{equation*}
{where $\mathcal{U}_{ad}$ replaces $\{\Om/\partial\Om\subset\{x, a\leq |x| \leq b\}$ and includes the geometrical constraint, except the convexity one.}
Note that $(\partial\Omega_0)_{in}$ describes the part of the boundary which is {\it inside} the constraints, apart from the convexity one. However, if one deals also with a finite-dimensional equality constraint (for example $|\Om|=V_{0}$), then one should work with a Lagrange multiplier, see Remark \ref{rk:constraint}.
\end{remark}

\begin{remark}\label{rk:existence}
Due to the convexity constraint, the existence of a solution to (\ref{e:minJbis})  can be proved easily when $0<a<b<\infty$.
We briefly provide a proof here: let $(\Omega_n)_{n\in\N}\in\mathcal{O}_{ad}^\N$, with parametrization $u_n\in\mathcal U_{ad}$, be a minimizing sequence of 
$J(\Omega)$.
From $u_n''+u_n\geq0$ and $1/u_n\in[a,b]$, it follows that $(u_n)_{n\in\mathbb N}$, is strongly relatively compact in {$H^1(\T)$}
and therefore we may assume $\lim_{n\to\infty}u_n=u_0$ in ${H^1}(\T)$ (see also Remark \ref{rk:existmultiD}{)}, with $u_0''+u_0\geq0$ and $1/u_0\in[a,b]$. 
In such conditions the domains $\Omega_n$ satisfy a uniform exterior cone condition and will converge to $\Omega_0$ {for the Hausdorff convergence},
with parametrization $u_0$. Then $U_{\Omega_n}$ will converge to $U_{\Omega_0}$ in $H^1(B(0,b))$ 
(see \cite[Prop 2.4.4 and Theorem 3.2.13]{HP}), {and thanks to the continuity of the perimeter for the $H^1(\T)$-norm we get that $P(\Om_{n})$ converges to $P(\Om_{0})$}, which implies that $\Omega_0$ solves (\ref{e:minJbis}).
\end{remark}
{\bf Proof of Theorem \ref{th:polygon}}.
Let $u_{0}$ such that $\Om_{0}=\Om_{u_{0}}$. Then $u_{0}$ is a solution of \eqref{e:minJu}, where 
$j(u)={R}(e(u),m(u))-p(u)$ and $e(u)=E(\Om_{u}), m(u)=|\Om_{u}|, p(u)=P(\Om_{u})$.

The second order derivatives related to the geometrical terms $m(u)$ and $p(u)$ can be easily computed. Indeed, 
first note that for every $u\in W^{1,\infty}(\T)$, $u>0$ we have
\[
m(u)=\int_{\T} \frac{1}{2u^2}d\theta, \;\; \qquad 
p(u)=\int_{\T} \frac{\sqrt{u^2+u'^2}}{u^2}d\theta.
\]
It follows that $m$ and $p$ are twice differentiable {in $W^{1,\infty}(\T)$} and
there exist some real numbers $\beta_{1},\beta_{2}, \gamma$ (depending on $u_0$) and $\alpha>0$ such that, 
$$\left\{
\begin{array}{l}
|m''(u_{0})(v,v)|\leq \beta_{1}\|v\|^2_{L^2(\T)}\\
[3mm]
p''(u_{0})(v,v)\geq \alpha|v|^2_{H^{1}(\T)}-\gamma|v|_{H^{1}(\T)}\|v\|_{L^2(\T)}-\beta_{2}\|v\|_{L^2(\T)}^2
\end{array}
\right.
$$
It remains the difficult term $e''(u_{0})(v,v)$, which relies on Section  \ref{ss:mainresults}. 
To handle this term, we consider $v\in W^{1,\infty}(\T)$ and introduce the vector field:
\begin{eqnarray}\label{e:xi(t)}
 &&\xi\in  C^2((-t_{0},t_{0}),W^{1,\infty}(\R^2,\R^2)), 
 \nonumber\\
 &&\xi(t)=\left(\frac{1}{u+tv}-\frac{1}{u}\right)e^{i\theta}\eta(r,\theta)\quad \mbox{on }\ \partial\Omega_0\;\;\textrm{(in polar coordinates),}
\end{eqnarray}
{where $\eta\in C^\infty_{c}(\R^2)=\mathcal{D}(\R^2), 0\leq \eta\leq 1$, $\eta=1$ (resp. $\eta=0$) in a neighborhood of $\partial\Omega_0$ (resp. the origin)}.
Let us point out that $(I+\xi(t))(\Om_{u})=\Om_{u+tv}$, $e(u+tv)=E(\Omega_{u+tv})=\E(\xi(t))$ and also that
\begin{equation}\label{e:xi'(u),xi''(u)}
\hspace*{-2mm}
\forall v\in W^{1,\infty}(\T):\;\;
\xi'(0)(v)
=
-\frac{v}{u^2}e^{i\theta},
\;\,
\xi''(0)(v,v)
=
2\frac{v^2}{u^3}e^{i\theta}\;\,
\textrm{on}\; \partial\Om.
\end{equation}
Then, we will differentiate twice around 0 the map $t\mapsto \varphi(t):=E(\Omega_{u+tv})=\E_{\Om}(\xi(t))$:
\begin{eqnarray*}
\varphi'(0)
&=e'(u_{0})(v)&
=
\mathcal E'(0)(\xi'(0)),		\\[1mm]
\varphi''(0)
&=e''(u_{0})(v,v)&
=
\mathcal E''(0)(\xi'(0),\xi'(0))
+
\mathcal E'(0)(\xi''(0)).		
\end{eqnarray*}
and therefore Corollary \ref{c:e',e''(i,e,intro)} (see also \eqref{eq:lip1}, \eqref{eq:lip2})  implies {there exists $\eps\in(0,1/2)$ such that}
\[
|e''(u_{0})(v,v)|\leq \beta_{3}(\eps)\|\xi''(0)\|_{H^{\frac{1}{2}-\eps}(\partial\Om)}+\beta_{4}(\eps)\|\xi'(0)\|_{H^{1-\eps}(\partial\Om)}^2.
\]
We then use  the fact that the $H^{1-\eps}(\partial\Om)$ and $H^{1-\eps}(\T)$ norms are equivalent (since the transformation 
$\psi=\psi(r,\theta):=\frac{r}{u(\theta)}e^{i\theta}$ is bi-Lipschitz near $\T$ and $\psi(\T)=\partial\Omega$), 
and the fact that $H^{1-\eps}(\T)$ is a Banach algebra
to obtain
$$|e''(u)(v,v)|\leq \beta_{5}(\eps)\|v\|_{H^{1-\eps}(\T)}^2,$$
Easy computations about the term ${R}(E(\Om),|\Om|)$ imply that $j''$ satisfies \eqref{e:coercivite},
and therefore Theorem \ref{th:Uad->pol} applies. It follows that $u_{0}''+u_{0}$ is a finite sum of Dirac masses in any connected component of $\T_{in}$. Geometrically speaking, considering the formula \eqref{eq:curvature}, this correspond to the fact that any connected component of $(\partial\Om_{0})_{in}$ is polygonal and concludes the proof.
\hfill$\Box$

{
\subsection{Application in the multi-dimensional case}

In the multi-dimensional case, convexity constraint in shape optimization is much less understood, see \cite{BFL,HHL} and the work of T. Lachand-Robert. Nevertheless, we can use a parametrization similar to the one used in Section \ref{ssect:planar}, and we show in this section that our estimates of shape derivatives allows to obtain results in any dimension.

For $N\geq 2$, if $u:\S^{N-1}\to(0,\infty)$ is given, $\S^{N-1}=\{x\in\R^N,\; |x|=1\}$, we can consider
\begin{equation}\label{e:Omega_ubis}
\Om_u:=\left\{(r,\theta)\in [0,\infty)\times\S^{N-1},\;\;\;r<\frac{1}{u(\theta)}\right\}.
\end{equation}
{The function $u$ is the so-called gauge function of $\Om_{u}$.} 
The set $\Om_u$ is convex if and only if the 1-homogeneous extension of $u$, {denoted by the same letter} and given by 
{${\displaystyle u(x)=|x|u(x/|x|)}$} is convex in $\R^N$ (in this section, we will refer to this property by saying that $u:\S^{N-1}\to\R$ is convex), {see \cite[Section 1.7]{S} for example}.  In this way, we describe every bounded convex open set containing the origin.
Throughout this section, the regularity of any function defined on $\S^{N-1}$ is seen as the regularity on $\R^N\setminus\{0\}$ of its 1-homogeneous extension, and it is classical that it is equivalent to the regularity of the set $\Om_{u}$ itself.

With this parametrization, considering $j(u)=J(\Om_{u})$, problem \eqref{e:minJ} is equivalent to
\begin{equation}\label{e:minJubis}
\min\left\{j(u),\; u:\S^{N-1}\to(0,\infty)\textrm{ convex },\; u\in\mathcal U_{ad}\right\},
\end{equation}
{where $\mathcal U_{ad}\subset W^{1,\infty}(\S^{N-1})$ is defined as $\mathcal{U}_{ad}=\{u\in W^{1,\infty}(\S^{N-1}),\;\, \Om_{u}\in\mathcal{O}_{ad}\}$ and $\mathcal O_{ad}$ is the class of admissible open bounded sets considered in \eqref{e:minJ} (not taking into account the convexity constraint).}

Then in the same spirit as Theorem \ref{th:Uad->pol}, we can prove:
\begin{theorem}\label{th:multiD}
Let $u_{0}>0$ be a solution for \eqref{e:minJubis}, where $\mathcal{U}_{ad}$ is a convex subset of $W^{1,\infty}(\S^{N-1})$.
Assume $j:W^{1,\infty}(\S^{N-1})\to\R$ is $C^2$ and that  there exists  $s\in[0,1)$, $\alpha>0$, $\beta,\gamma\in\R$ such that for any
$v\in W^{1,\infty}(\S^{N-1})$
 we have
\begin{eqnarray}\label{eq:conc}
 j''(u_{0})(v,v)
 \leq
 -\alpha|v|^2_{H^1(\S^{N-1})} + \gamma |v|_{H^1(\S^{N-1})}\|v\|_{H^{s}(\S^{N-1})}+\beta\|v\|^2_{H^{s}(\S^{N-1})}.
 \label{e:coercivitebis}
\end{eqnarray}
Then the set
\begin{equation}\label{eq:tangent}
T_{u_{0}}=\{ v\in W^{1,\infty}(\S^{N-1}) / \exists \eps>0, \forall |t|<\eps,\; u_{0}+tv\in{\mathcal U}_{ad} \textrm{ and is convex}\},
\end{equation}
is a linear vector space of finite dimension.
\end{theorem}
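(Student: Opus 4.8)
\noindent The plan is the coercivity‑plus‑compactness argument of \cite{LN,LNP}, now available in any dimension thanks to the estimates of Section \ref{ss:mainresults}: I would first check that $T_{u_{0}}$ is a linear subspace of $W^{1,\infty}(\S^{N-1})$, hence of $H^{1}(\S^{N-1})$; then use the second order optimality condition at $u_{0}$ together with \eqref{eq:conc} to prove that the $H^{1}(\S^{N-1})$ and $H^{s}(\S^{N-1})$ norms are equivalent on $T_{u_{0}}$; and finally invoke the compactness of the embedding $H^{1}(\S^{N-1})\hookrightarrow H^{s}(\S^{N-1})$ to conclude. For the linearity, note that $0\in T_{u_{0}}$ and that $\lambda v\in T_{u_{0}}$ whenever $v\in T_{u_{0}}$, $\lambda\in\R$ (shrink $\eps$); if $v,w\in T_{u_{0}}$ and $\eps>0$ is valid for both, then for $|t|<\eps/2$ the gauge functions $u_{0}+2tv$ and $u_{0}+2tw$ lie in $\mathcal U_{ad}$ and are convex, i.e.\ their $1$-homogeneous extensions are convex on $\R^{N}$; since $u_{0}+t(v+w)=\tfrac12(u_{0}+2tv)+\tfrac12(u_{0}+2tw)$, it lies in $\mathcal U_{ad}$ (which is convex) and is convex (a convex combination of convex functions), so $v+w\in T_{u_{0}}$.

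Next, for $v\in T_{u_{0}}$ with admissible parameter $\eps>0$, the map $g(t):=j(u_{0}+tv)$ is $C^{2}$ on $(-\eps,\eps)$ (as $j$ is $C^{2}$ on $W^{1,\infty}(\S^{N-1})$ and $t\mapsto u_{0}+tv$ is affine) and attains a local minimum at $t=0$, because $u_{0}+tv$ is a competitor for \eqref{e:minJubis} for $|t|<\eps$. Hence $j'(u_{0})(v)=0$ and $j''(u_{0})(v,v)=g''(0)\ge 0$ for every $v\in T_{u_{0}}$. Plugging this into \eqref{eq:conc} yields $\alpha|v|_{H^{1}(\S^{N-1})}^{2}\le|\gamma|\,|v|_{H^{1}(\S^{N-1})}\|v\|_{H^{s}(\S^{N-1})}+|\beta|\,\|v\|_{H^{s}(\S^{N-1})}^{2}$, and a Young inequality gives $|v|_{H^{1}(\S^{N-1})}^{2}\le C_{0}\|v\|_{H^{s}(\S^{N-1})}^{2}$. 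To control the full norm I would estimate the mean by $|\overline v|\,|\S^{N-1}|^{1/2}\le\|v\|_{L^{2}(\S^{N-1})}\le\|v\|_{H^{s}(\S^{N-1})}$ (valid since $s\ge0$) and the oscillation by the Poincar\'e inequality $\|v-\overline v\|_{L^{2}(\S^{N-1})}\le C|v|_{H^{1}(\S^{N-1})}$, obtaining
\[
\|v\|_{H^{1}(\S^{N-1})}\le C_{1}\|v\|_{H^{s}(\S^{N-1})}\qquad\text{for all }v\in T_{u_{0}};
\]
combined with the trivial continuity of $H^{1}(\S^{N-1})\hookrightarrow H^{s}(\S^{N-1})$, the two norms are equivalent on $T_{u_{0}}$.

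Finally, let $\overline{T_{u_{0}}}$ be the closure of $T_{u_{0}}$ in $H^{1}(\S^{N-1})$; the norm equivalence persists on it. By the Rellich--Kondrachov theorem on the compact manifold $\S^{N-1}$ (this is where $s<1$ is used), $H^{1}$-bounded subsets of $\overline{T_{u_{0}}}$ are relatively compact in $H^{s}(\S^{N-1})$, hence relatively compact in $\overline{T_{u_{0}}}$ for the $H^{1}$ norm by the norm equivalence; so the closed unit ball of the Banach space $\overline{T_{u_{0}}}$ is compact, and $\overline{T_{u_{0}}}$ — a fortiori $T_{u_{0}}$ — is finite dimensional (a normed space with compact closed unit ball is finite dimensional, by Riesz). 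I expect the only point requiring genuine care to be the linearity of $T_{u_{0}}$: one must remember that ``convex'' refers to the $1$-homogeneous extension and that this property, together with membership in $\mathcal U_{ad}$, survives the midpoint average used there. Everything else is the standard coercivity/compactness scheme, the role of the shape‑derivative estimates of Section \ref{ss:mainresults} being precisely to guarantee that \eqref{eq:conc} holds in the concrete cases of interest, exactly as in the proof of Theorem \ref{th:polygon}.
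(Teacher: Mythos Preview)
Your proof is correct and follows essentially the same approach as the paper: second-order optimality gives $j''(u_{0})(v,v)\ge 0$ on $T_{u_{0}}$, the estimate \eqref{eq:conc} then forces an $H^1$/$H^s$ comparison, and compactness of the embedding $H^1\hookrightarrow H^s$ together with Riesz's lemma yields finite dimension. The paper simply asserts that linearity of $T_{u_{0}}$ is ``easy to check'' and bounds $|v|_{H^1}$ on the $H^s$-unit ball via the quadratic formula rather than Young's inequality, omitting your Poincar\'e/closure step; your version is just a more detailed execution of the same argument.
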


\begin{proof}
It is easy to check that $T_{u_{0}}$ is a linear vector space. Since $u_{0}$ is optimal, we have $j''(u_{0})(v,v)\geq 0$ for every $v\in T_{u_{0}}$. To conclude, we prove that the unit ball of $T_{u_{0}}$ for the {$H^s(\S^{N-1})$}-norm is relatively compact. Indeed, thanks to \eqref{eq:conc}, we get
$$\forall v\in T_{u_{0}}\textrm{ such that }\|v\|_{H^{s}(\S^{N-1})}\leq 1, \;\;\alpha|v|^2_{H^1(\S^{N-1})}\leq \gamma |v|_{H^1(\S^{N-1})}+\beta,$$
which easily leads to
$$\forall v\in T_{u_{0}}\textrm{ such that }\|v\|_{H^{s}(\S^{N-1})}\leq 1, \;\;
|v|_{H^1(\S^{N-1})}\leq {\frac{|\gamma|+\sqrt{\gamma^2+4\alpha|\beta|}}{2\alpha}}.$$
From the compact embedding of $H^{s}(\S^{N-1})$ in $H^{1}(\S^{N-1})$, we conclude that the unit ball of $T_{u_{0}}\subset H^s(S^{\N-1})$ has a compact closure and therefore from Riesz Theorem, $T_{u_{0}}$ has finite dimension.
\end{proof}

As a corollary, the results we obtained in this paper, {namely iii), Theorem \ref{th:e',e''(i,intro)}}, lead to the following generalization and improvement of \cite[Theorem 4.5]{BFL}:
\begin{corollary}\label{cor:multiD}
Let $\Omega_0$ be an open convex subset of $\R^{N}$, solution of the optimization problem:
\begin{equation}\label{e:minJbisN}
\hspace*{-8mm}
\min\Big\{J(\Omega)=R(E(\Omega),|\Omega|)-P(\Omega),\;\; \Omega\subset\R^N\textrm{open, convex and such that }\partial\Om\subset\{x, a\leq |x|\leq b\}\Big\},
\end{equation}
where $R:\R^2\to\R$ is smooth, $(a,b)\in(0,\infty]^2$, $a<b$, and $E$ is an energy like (\ref{e:E(i)}), corresponding to the interior problem.

Then denoting by $u_{0}$ the gauge function of $\Om_{0}$, $T_{u_{0}}$ defined in \eqref{eq:tangent} has finite dimension. In particular, if $\omega$ is a $C^2$ relatively open subset of $(\partial\Om_{0})_{in}:=\partial\Om_{0}\cap\{x, a< |x|< b\}$, then the Gauss curvature of $\Om_{0}$ vanishes on $\omega$.
\end{corollary}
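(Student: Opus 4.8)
The plan is to verify the hypotheses of Theorem \ref{th:multiD} for the gauge reformulation \eqref{e:minJubis} of \eqref{e:minJbisN} and then to extract the statement on the Gauss curvature. Write $j(u)=R(e(u),m(u))-p(u)$ with $e(u)=E(\Om_u)$, $m(u)=|\Om_u|=\tfrac1N\int_{\S^{N-1}}u^{-N}$, $p(u)=P(\Om_u)=\int_{\S^{N-1}}u^{-N}\sqrt{u^2+|\nabla_{\S^{N-1}}u|^2}$, and note that $\mathcal U_{ad}=\{u\in W^{1,\infty}(\S^{N-1}):1/u\in[a,b]\}$ is a convex subset of $W^{1,\infty}(\S^{N-1})$. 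First I would check that $j$ is $C^2$ near $u_0$: $m$ and $p$ are $C^\infty$ near $u_0$ by their explicit formulas (the integrands are smooth since $u\ge 1/b>0$), and $e=\mathcal E\circ\xi(\cdot)$ is $C^2$ by item i) of Theorem \ref{th:e',e''(i,intro)}, where $\xi(u)\in W^{1,\infty}(\R^N,\R^N)$ is the $N$-dimensional analogue of the vector field \eqref{e:xi(t)} built from the gauge function, so that $(I+\xi(u))(\Om_{u_0})=\Om_u$ and $u\mapsto\xi(u)$ is smooth near $u_0$. On $\partial\Om_0$ one has $\xi'(0)(v)=-(v/u_0^2)\tfrac{x}{|x|}$ (linear in $v$) and $\xi''(0)(v,v)=2(v^2/u_0^3)\tfrac{x}{|x|}$ (quadratic in $v$), each a fixed Lipschitz field times a power of $v$, and $e'(u_0)(v)=\mathcal E'(0)(\xi'(0))$, $e''(u_0)(v,v)=\mathcal E''(0)(\xi'(0),\xi'(0))+\mathcal E'(0)(\xi''(0))$.

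The geometric terms are handled exactly as in the proof of Theorem \ref{th:polygon}, now on $\S^{N-1}$: one gets $|m''(u_0)(v,v)|\le\beta_1\|v\|_{L^2(\S^{N-1})}^2$ together with the $H^1$-coercivity of the perimeter Hessian, $p''(u_0)(v,v)\ge\alpha|v|_{H^1(\S^{N-1})}^2-\gamma|v|_{H^1(\S^{N-1})}\|v\|_{L^2(\S^{N-1})}-\beta_2\|v\|_{L^2(\S^{N-1})}^2$ for some $\alpha>0$ (only a rank-one defect, controlled by lower-order norms, can spoil positivity, exactly as for the minimal-surface operator). Differentiating $R(e(u),m(u))$ twice produces, besides $\partial_1R\cdot e''(u_0)(v,v)+\partial_2R\cdot m''(u_0)(v,v)$, only quadratic expressions in the linear forms $e'(u_0)(v)$ and $m'(u_0)(v)$, with $|m'(u_0)(v)|\le C\|v\|_{L^2(\S^{N-1})}$.

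The crux is to bound $e''(u_0)(v,v)$ by a norm strictly weaker than $H^1(\S^{N-1})$. Since $\Om_0$ is convex, hence semi-convex, item iii) of Theorem \ref{th:e',e''(i,intro)} (equivalently item ii) of Corollary \ref{c:e',e''(i,e,intro)}) gives that $E'(\Om_0)$ is continuous on $H^s(\partial\Om_0)$ for all $s\in(0,1]$ and $E''(\Om_0)$ on $H^s(\partial\Om_0)$ for all $s\in(1/2,1]$. Fix $\eps\in(0,1/2)$. Since the gauge map $\S^{N-1}\to\partial\Om_0$ is bi-Lipschitz and multiplication by a fixed Lipschitz function is bounded on $H^{1-\eps}$, one has $|\mathcal E''(0)(\xi'(0),\xi'(0))|\le C\|\xi'(0)\|_{H^{1-\eps}(\partial\Om_0)}^2\le C\|v\|_{H^{1-\eps}(\S^{N-1})}^2$. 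For the term $\mathcal E'(0)(\xi''(0))$, which is quadratic in $v$, the Banach-algebra trick used in the planar case fails when $N\ge3$; instead I would use \eqref{e:|e'(i)|<Tr,Lip} with an exponent $r>1$ close to $1$, giving $|\mathcal E'(0)(\xi''(0))|\le C\|v^2\|_{W^{1-1/r,r}(\S^{N-1})}$, and then a fractional Leibniz inequality $\|v^2\|_{W^{\sigma,r}}\lesssim\|v\|_{L^{p_1}}\|v\|_{W^{\sigma,p_2}}$ with $\tfrac1r=\tfrac1{p_1}+\tfrac1{p_2}$, $p_1=p_2$ slightly above $2$ and $\sigma=1-2/p_1$ small; both factors are then controlled by $\|v\|_{H^{1-\eps}(\S^{N-1})}$ via Sobolev embeddings, valid in every dimension. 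Hence $|e''(u_0)(v,v)|\le C\|v\|_{H^{1-\eps}(\S^{N-1})}^2$ and similarly $|e'(u_0)(v)|^2\le C\|v\|_{H^{1-\eps}(\S^{N-1})}^2$. Adding everything up and absorbing the $L^2$-contributions into the $H^{1-\eps}$-norm, $j''(u_0)$ satisfies \eqref{eq:conc} with $s=1-\eps$, and Theorem \ref{th:multiD} gives that $T_{u_0}$ is a finite-dimensional vector space. I expect this bookkeeping — closing every contribution with a Sobolev exponent strictly below $1$, uniformly in $N$ — to be the main technical obstacle.

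For the last assertion I would argue by contradiction. Suppose the Gauss curvature of $\Om_0$ is positive at some $x_0$ in a $C^2$ relatively open piece $\omega\subset(\partial\Om_0)_{in}$; by continuity it stays positive and $\partial\Om_0$ stays $C^2$ on a relatively open $V\subset\omega$ around $x_0$. In gauge coordinates this means the $1$-homogeneous extension $\bar u_0$ of $u_0$ is $C^2$ and, transversally to the radial direction (which is always in the kernel of $D^2\bar u_0$), uniformly positive definite on the open cone over a slightly smaller cap around $x_0/|x_0|$. Consequently, for any $v\in C^2(\S^{N-1})$ supported in that smaller cap, the $1$-homogeneous extension of $u_0+tv$ has positive semidefinite (distributional) Hessian for $|t|$ small — it is $C^2$ with positive semidefinite Hessian on the cone and coincides with the convex function $\bar u_0$ away from it — so, by the convexity characterization recalled in Section \ref{ssect:planar}, $u_0+tv$ parametrizes a convex set; moreover $u_0+tv\in\mathcal U_{ad}$ for $|t|$ small because $V\subset\partial\Om_0\cap\{a<|x|<b\}$. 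Thus $T_{u_0}$ contains the infinite-dimensional space of $C^2$ functions on $\S^{N-1}$ supported in a fixed cap around $x_0/|x_0|$, contradicting the finite dimensionality just established. As $\Om_0$ is convex its Gauss curvature on $\omega$ is $\ge0$; hence it vanishes identically on $\omega$.
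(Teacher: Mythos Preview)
Your proposal is correct and follows essentially the same route as the paper's proof: verify the $H^1$-coercivity of $-p''(u_0)$ and the $L^2$-bound on $m''(u_0)$, control $e''(u_0)(v,v)$ by an $H^s(\S^{N-1})$-norm with $s<1$ via the semi-convex estimates of Theorem~\ref{th:e',e''(i,intro)}\,iii), apply Theorem~\ref{th:multiD}, and derive the Gauss-curvature statement by exhibiting an infinite-dimensional family of admissible perturbations where the curvature is positive. The only cosmetic differences are that the paper works throughout with the $W^{1-1/r,r}$ and $W^{1-1/(2r),2r}$ bounds and closes both terms via the Besov product embeddings of Runst--Sickel, whereas you invoke Corollary~\ref{c:e',e''(i,e,intro)}\,ii) directly for the $\mathcal E''$ term and a Kato--Ponce type fractional Leibniz for the $\mathcal E'(\xi''(0))$ term; these are the same tools packaged slightly differently, and your Sobolev bookkeeping (choosing $r$ close to $1$ so that $H^{1-\eps}(\S^{N-1})\hookrightarrow L^{2r}\cap W^{1-1/r,2r}$) reproduces exactly the paper's condition~\eqref{e:s-2}.
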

Compared to \cite[Theorem 4.5]{BFL}, we have enlarged the class of functionals under consideration, and we require less regularity on the optimal set.
\begin{remark}
Contrary to Theorem \ref{th:polygon}, we cannot deal with the exterior problem when $N>2$. This is due to the fact that for a Lipschitz domain $\Om$ in dimension $N\geq 3$, our estimates do not imply that there exists $s<1$ such that $E''(\Om)(\xi,\xi)\leq \|\xi\|_{H^s(\partial\Om)}$ (whereas it is the case for $N=2$, see \eqref{eq:lip2}).
\end{remark}

\begin{remark}\label{rk:constraint2bis}
As in Remark \ref{rk:constraint2}, we may focus on a geometrical constraint different from $\partial\Om\subset\{x, a\leq |x| \leq b\}$. In that case, {given $\mathcal{O}_{ad}$ a class of admissible open bounded set}, the previous result remain valid when replacing the definition of $(\partial\Om_{0})_{in}$ as follows
\begin{eqnarray*}
(\partial\Omega_0)_{in}
&=&
\left\{x\in\partial\Omega_0 \;/\; \exists \theta\in \S^{N-1}_{in}, x=\frac{1}{u_{0}(\theta)}\theta\right\},\; \mbox{\it with}\\[2mm]
\S^{N-1}_{in}
&=&
\{\theta\in\S^{N-1}\; /\; \exists \eps,\delta>0,  \forall v\in W^{1,\infty}_{0}(B_{\S^{N-1}}(\theta,\eps))\textrm{\it with }
\|v\|_{W^{1,\infty}}<\delta, u_0+v\in \mathcal{U}_{ad}\}\;\mbox{\it and}\\[3mm]
\mathcal{U}_{ad}
&=&
{\{u, \Om_{u}\in\mathcal{O}_{ad}\},}
\end{eqnarray*}
{if $\mathcal{U}_{ad}$ is convex, as assumed in Theorem \ref{th:multiD}}.
\end{remark}

\begin{remark}\label{rk:existmultiD}
As in Remark \ref{rk:existence}, the existence of a solution to (\ref{e:minJbisN})  can be proved easily when $0<a<b<\infty$. The proof is similar to the one in Remark \ref{rk:existence}, the only adaptation to the multi-dimensional case is the fact that
 a sequence of 1-homogeneous functions $u_{n}:\overline{B_{1}}\to\R$ (where $B_{1}$ is the unit ball of $\R^N$) such that 
 $\forall n\in\N, \forall \theta\in\S^{N-1}, \alpha\leq u_{n}(\theta)\leq \beta$, with $0<\alpha<\beta<\infty$, is strongly relatively compact in $H^1(\overline{B_{1}})$ (actually in $W^{1,p}(\overline{B_{1}})$ for any $p<\infty$). To that end, we first notice that 
 $\|\nabla u_{n}\|_{\infty}\leq \beta$ (see for example \cite{FM}), which implies that up to a subsequence $u_{n}$ converges to $u$ for the $L^\infty(\overline{B_{1}})$-norm. Clearly $u$ is then 1-homogeneous, convex, and satisfies $\forall \theta\in\S^{N-1}, \alpha\leq u(\theta)\leq \beta$. To obtain the convergence for the $H^1$-norm, we follow \cite[Lemma 2.1.2]{BB}. As in Remark \ref{rk:existence}, the perimeter, the volume, and the energy $E$ are continuous, and therefore the set whose gauge function is the limit of $u_{n}$ is a minimizer.
\end{remark}
{\bf Proof of Corollary \ref{cor:multiD}}.
Let $u_{0}$ such that $\Om_{0}=\Om_{u_{0}}$. Then $u_{0}$ is a solution of \eqref{e:minJubis}, where 
$j(u)={R}(e(u),m(u))-p(u)$ and $e(u)=E(\Om_{u}), m(u)=|\Om_{u}|, p(u)=P(\Om_{u})$.

The second order derivatives related to the geometrical terms $m(u)$ and $p(u)$ can be easily computed. Indeed, 
first note that for every $u\in W^{1,\infty}({\S^{N-1}})$, $u>0$ we have {(see for instance \cite{fug})}
\[
m(u)=\int_{\S^{N-1}} \frac{1}{Nu^N}d\theta, \;\; \qquad 
p(u)=\int_{\S^{N-1}} \frac{\sqrt{u^2+|\nabla_{\tau} u|^2}}{u^N}d\theta.
\]
where $\nabla_{\tau}$ denotes the tangential gradient on $\S^{N-1}$.
It follows that $m$ and $p$ are twice differentiable and there exists $\beta_{1}$ (depending on $u_0$) such that, 
$$
|m''(u_{0})(v,v)|\leq \beta_{1}\|v\|^2_{L^2(\T)}
$$

{
Easy direct calculations give
\begin{eqnarray*}
p'(u)(v)
&=&
\int_{\S^{N-1}}
\partial_u G(u,|\nabla_{\tau} u|)v 
+
\partial_p G(u,|\nabla_{\tau} u|)\frac{\nabla_{\tau} u\cdot \nabla_{\tau} v}{|\nabla_\tau u|},\\
p''(u)(v,v)
&=&
\int_{\S^{N-1}}
\partial_{uu}G(u,|\nabla_{\tau} u|)v^2
+
2\partial_{up}G(u,|\nabla_{\tau} u|)\frac{\nabla_{\tau} u\cdot\nabla_{\tau} v}{|\nabla_{\tau} u|}v	
+\\
& &
\hspace*{10mm}
\partial_{pp}G(u,|\nabla_{\tau} u|)\frac{(\nabla_{\tau} v\cdot \nabla_{\tau} u)^2}{|\nabla_{\tau} u|^2}
+
\partial_p G(u,|\nabla_{\tau} u|)\frac{|\nabla_{\tau} u|^2|\nabla_{\tau} v|^2-(\nabla_{\tau} v\cdot \nabla_{\tau} u)^2}{|\nabla_{\tau} u|^3}.
\end{eqnarray*}
Taking into account ${\displaystyle \partial_p G(u,p)=\frac{p}{u^N\sqrt{u^2+p^2}}}$, 
${\displaystyle \partial_{pp}G(u,p)=\frac{u^2}{u^N(u^2+p^2)^{3/2}}}$ and rearranging the last two terms gives
\begin{eqnarray*}
p''(u)(v,v)
&=&
\int_{\S^{N-1}}
\partial_{uu}G(u,|\nabla_{\tau} u|)v^2
+
2\partial_{up}G(u,|\nabla_{\tau} u|)\frac{\nabla_{\tau} u\cdot\nabla_{\tau} v}{|\nabla_{\tau} u|}v	
+\\
& &
\hspace*{10mm}
\frac{1}{u^N\sqrt{u^2+|\nabla_\tau u|^2}}
\left[
|\nabla_\tau v|^2 
-
\frac{|\nabla_\tau u|^2}{u^2+|\nabla_\tau u|^2} 
\left(\frac{\nabla_{\tau} u}{|\nabla_\tau u|}\cdot \nabla_{\tau} v\right)^2
\right].
\end{eqnarray*}
As the first and second terms are controlled by $\beta_2\|v\|^2_{L^2}$ and $\gamma\|v\|_{L^2}|v|_{H^1}$ respectively, with 
$\beta_{2}, \gamma$ depending on $u_0$, and 
\[
|\nabla_\tau v|^2 
-
\frac{|\nabla_\tau u|^2}{u^2+|\nabla_\tau u|^2} 
\left(\frac{\nabla_{\tau} u}{|\nabla_\tau u|}\cdot \nabla_{\tau} v\right)^2
\geq
(1-(1-\alpha))|\nabla_\tau v|^2 = \alpha\|\nabla_\tau v|^2,
\]
with $\alpha=\alpha(u_0)>0$, it follows
\[
p''(u_{0})(v,v)\geq \alpha|v|^2_{H^{1}(\S^{N-1})}-\gamma|v|_{H^{1}(\S^{N-1})}\|v\|_{L^2(\S^{N-1})}-\beta_{2}\|v\|_{L^2(\S^{N-1})}^2.
\]
}
{As in the proof of Theorem \ref{th:polygon}, the term $e''(u_{0})(v,v)$ relies on Section  \ref{ss:mainresults}, this time in any dimension, but restricted to the case of convex domains. 
With the same proof and using iii), Theorem \ref{th:e',e''(i,intro)},  we obtain 
\[
|e''(u_{0})(v,v)|\leq \beta_{3}(r)\|\xi''(0)\|_{W^{1-\frac{1}{r},r}(\partial\Om)}
+
\beta_{4}(r)\|\xi'(0)\|_{W^{1-\frac{1}{2r},2r}(\partial\Om)}^2,
\]
valid for any $r\in(1,\infty)$.
We would like to write the above inequality in terms of the $H^s(\mathbb S^{N-1})$-norm of $v$, for some $s\in(0,1)$.
For this, first we note that any bi-Lipschitz transformation from $\mathbb S^{N-1}$ to $\partial\Omega_0$ 
{(in our case it is $\theta\mapsto (1/u(\theta),\theta)$)}
defines a
diffeomorphism from $W^{s,p}(\partial\Omega_0)$ to $W^{s,p}(\mathbb S^{N-1})$, for every $s\in[0,1]$ and $p\in[1,\infty]$. Then the above inequality is equivalent to
\begin{equation}\label{e''(0),1}
|e''(u_{0})(v,v)|\leq \beta_{5}(r,u)\left\|{\frac{v^2}{u_0^{3}}}\right\|_{W^{1-\frac{1}{r},r}(\mathbb S^{N-1})}
+
\beta_{6}(r,u)\left\|{\frac{v}{u_0^{2}}}\right\|_{W^{1-\frac{1}{2r},2r}(\mathbb S^{N-1})}^2,
\end{equation}
Note that here we used the formulas (\ref{e:xi'(u),xi''(u)}) with $e^{i\theta}$ replaced by $\theta$ (of norm 1).

For the first term in (\ref{e''(0),1}) we 
wonder for which $s\in(0,1)$ the following inclusion is continuous:
\begin{equation}\label{e:prod1}
H^s(\R^{N-1})H^s(\R^{N-1})W^{1,\infty}(\R^{N-1})\subset W^{1-1/r,r}(\R^{N-1}).
\end{equation}
We use \cite[Corollary, p. 189]{R+S}, which refers to the more general embedding 
$B^{s_1}_{p_1,{q_1}}\cdots B^{s_m}_{p_m,{q_m}}\subset B^s_{p,{q}}$ in $\mathbb R^N$. Note that by using a partition of unity, some Lipschitz transformations and the extension theorem, we may use these embeddings as if $\mathbb S^{N-1}$ was $\mathbb R^{N-1}$, and therefore we use it with $N-1$ instead of $N$. In our specific case, this leads to the condition
\begin{equation}\label{e:s-1}
{\left(1-\frac{1}{r}\right)\frac{N}{2}<s<1}.
\end{equation}
Clearly, for every $s\in(0,1)$ one can find $r$ so that (\ref{e:s-1}) holds, which implies that \eqref{e:prod1} is valid.

For the second term in (\ref{e''(0),1}), we look for the inclusion 
\begin{equation}\label{e:prod2}
H^s(\R^{N-1})W^{1,\infty}(\R^{N-1})\subset W^{1-1/(2r),2r}(\R^{N-1}).
\end{equation}
which is valid (again using \cite{R+S}) if
\begin{equation}\label{e:s-2}
{1-\frac{1}{2r}+\frac{N-1}{2}(1-\frac{1}{r})< s<1}.
\end{equation}
Inequality (\ref{e:s-2}) has a solution $r$ (sufficiently close to $1$) for any $s\in(1/2,1)$, which implies that (\ref{e:prod2}) holds for every $s\in(1/2,1)$.

Combining (\ref{e:prod1}) and (\ref{e:prod2}) with (\ref{e''(0),1}) gives
\[|e''(u)(v,v)|\leq \beta_{7}(s,u_0)\|v\|_{H^{s}(\S^{N-1})}^2,
\]
for any $s\in(1/2,1)$.}

Easy computations about the term ${R}(E(\Om),|\Om|)$ implies  then that $j''(u_{0})$ satisfies \eqref{e:coercivitebis},
and therefore Theorem \ref{th:multiD} applies, and $T_{u_{0}}$ has finite dimension.

The fact that the Gauss curvature must vanish where it is defined is an easy consequence. Indeed, if $\omega$ is as in the statement of Corollary \ref{cor:multiD} and if the Gauss curvature is positive at some point of $\omega$, then it is positive in a neighborhood $\hat{\omega}$ of this point. As a consequence, any smooth function with compact support in $\hat{\omega}$ is in $T_{u_0}$. This contredicts the fact that $T_{u_0}$ has finite dimension.
\hfill$\Box$
}}

\bibliographystyle{plain}

\end{document}